\numberwithin{equation}{section}
\theoremstyle{plain}
\newtheorem{thm}{Theorem}[section]
\newtheorem{theorem}{Theorem}[section]
\newtheorem{lemma}[theorem]{Lemma}
\newtheorem{corollary}[theorem]{Corollary}
\theoremstyle{remark}
\newtheorem{remark}[theorem]{Remark}
\theoremstyle{definition}
\newtheorem{assumption}[theorem]{Assumption}
\newtheorem{definition}[theorem]{Definition}
\newcommand\cbrk{\text{$]$\kern-.15em$]$}}
\newcommand\opar{\text{\,\raise.2ex\hbox{${\scriptstyle|}$}\kern-.34em$($}}
\newcommand\cpar{\text{$)$\kern-.34em\raise.2ex\hbox{${\scriptstyle |}$}}\,}
\def\XXint#1#2#3{{\setbox0=\hbox{$#1{#2#3}{\int}$}
\vcenter{\hbox{$#2#3$}}\kern-.5\wd0}}
\newcommand\bL{\mathbb{L}}
\newcommand\bR{\mathbb{R}}
\newcommand\bH{\mathbb{H}}
\newcommand\bZ{\mathbb{Z}}
\newcommand\bD{\mathbb{D}}
\newcommand\bS{\mathbb{S}}
\newcommand\bE{\mathbb{E}}
\newcommand\cF{\mathcal{F}}
\newcommand\cM{\mathcal{M}}
\begin{document}
\begin{frontmatter}
\title\huge\textbf{ BOUNDARY BEHAVIOR AND INTERIOR H\"OLDER REGULARITY OF SOLUTION TO NONLINEAR STOCHASTIC PARTIAL DIFFERENTIAL EQUATIONS DRIVEN BY SPACE-TIME WHITE NOISE} 
\runtitle{Regularity of  SPDE driven by space-time white noise}

\begin{aug}
\author{\fnms{Beom-seok} \snm{Han}\thanksref{T1}\ead[label=e1]{hanbeom@korea.ac.kr}},
\author{\fnms{Kyeong-hun} \snm{Kim}\thanksref{T1}\ead[label=e2]{kyeonghun@korea.ac.kr}},

\thankstext{T1}{Supported by Samsung Science  and Technology Foundation under Project Number SSTF-BA1401-02.}

\runauthor{B. Han and K. Kim}

\affiliation{Department of mathematics, Korea university}

\address{Department of Mathematics, Korea University \\
1 anam-dong sungbuk-gu,  \\
Seoul, south Korea 136-701 \\
\printead{e1}\\
\printead{e2}}

\end{aug}
\begin{abstract}
We present uniqueness and existence  in weighted Sobolev spaces  of  the equation
\begin{equation*}
    \label{eqn main_0}
u_t=(au_{xx}+bu_x+cu)+ \xi |u|^{1+\lambda} {\dot{B}}, \quad\,\, t>0, \, x\in (0,1)
\end{equation*}
with  initial data $u(0,\cdot)=u_0$ and zero boundary data. Here $\lambda\in [0,1/2)$,  $\dot{B}$ is a space-time white noise, and the coefficients $a,b,c$ and the function $\xi$ depend on $(\omega,t,x)$ and the initial data $u_0$ depends on $(\omega,x)$. 

More importantly, we  obtain various interior H\"older regularities and boundary behaviors  of the solution. For instance,   if the initial data is in appropriate $L_p$ spaces, then for any  small $\varepsilon>0$ and $T<\infty$, almost surely
\begin{equation}
    \label{eqn 12.16}
    \rho^{-1/2-\kappa}u \in C^{\frac{1}{4}-\frac{\kappa}{2}-\varepsilon, \frac{1}{2}-\kappa-\varepsilon}_{t,x}([0,T]\times (0,1)), \quad \forall\, \kappa\in (\lambda, 1/2),  
\end{equation}
where $\rho(x)$ is the distance from $x$ to the boundary. Taking $\kappa \downarrow \lambda$, one gets the the maximal H\"older exponents in time and space, which  are $1/4-\lambda/2-\varepsilon$ and 
$1/2-\lambda-\varepsilon $ respectively. Also, letting $\kappa \uparrow 1/2$, one gets better decay or behavior near the boundary.
\end{abstract}

\begin{keyword}[class=MSC]
\kwd{60H15, 35R60.}
\end{keyword}

\begin{keyword}
\kwd{Nonlinear stochastic partial differential equations, Space-time white noise, Boundary behavior, Interior H\"older regularity.}
\end{keyword}

\end{frontmatter}

\section{Introduction}

In this article  we  present the unique solvability theory  in Sobolev spaces and interior H\"older regularities  of the solution to  the  zero-boundary value problem of   the equation
  \begin{equation} \label{main_equation}
du =  (au_{xx}+bu_x+cu)\, dt+ \xi |u|^{1+\lambda} dB_t, \,\, t>0, \, x\in I \,; \,u(0,\cdot) = u_0,
\end{equation}
where  $0\leq \lambda < \frac{1}{2}$, $I=(0,1)$ and $B_t$ is a standard cylindrical Brownian motion on $L_2(I)$.  The  initial data  $u_0=u_0(\omega,x)$ is nonnegative,
$\xi$  is  a bounded function depending on $(\omega,t,x)$, and 
the coefficients $a,b,c$ are measurable in $(\omega,t)$ and twice differentiable in $x$.

Here is a short description on the related works. 
 The equation of type
\begin{equation}
      \label{eqn 12.16.7}
      du=u_{xx}\,dt + |u|^{\gamma} \,dB_t, \quad t>0, x\in \bR
      \end{equation}
      has been considered as one of main problems in the theory of SPDEs and has been intensively studied.  
      See e.g. \cite{Kijung, kry1999analytic,mueller2009some,mueller1991long,mueller2014nonuniqueness,burdzy2010non, mytnik2011pathwise, Xiong}.
If $\gamma=1/2$ then equation \eqref{eqn 12.16.7} describes the super-process (or Dawson-Watanabe process), and strong uniqueness of non-negative solution was proved in \cite{Xiong}. If  $\gamma\in (0,3/4)$, then non-uniqueness can hold among signed solutions, and the uniqueness among non-negative functions is still open (see \cite{mueller2014nonuniqueness,burdzy2010non}). If  $\gamma\in (3/4, 1)$ then existence of weak solution and path-wise uniqueness was obtained in \cite{mytnik2011pathwise}, and if $\gamma\in [1,3/2)$ then long time existence was proved in \cite{mueller1991long} (on unit circle) and \cite{kry1999analytic} (on $\bR$). 

We remark that all of above   articles handle equation \eqref{eqn 12.16.7} on either $\bR$ or unit circle $S^1$, and  to the best of our knowledge,  general theory   handling equation \eqref{main_equation} in a domain with non-empty boundary is not available in the literature.

Comparing \eqref{main_equation} and \eqref{eqn 12.16.7}, note that $\gamma=1+\lambda$, and our equation is much general than \eqref{eqn 12.16.7} in the sense that the coefficients $a,b,c$ and the function $\xi$ are allowed to depend on $(\omega,t,x)$.   
Under the  assumptions described below \eqref{main_equation},  we prove that if a nonnegative function $u_0$ belongs to a Sobolev space $U^{1/2-\kappa}_{p,\theta}(I)$, $\kappa\in (\lambda, 1/2)$,  then the equation has a unique strong solution in $\mathfrak{H}^{1/2-\kappa}_{p,\theta,loc}(I,\infty)$. See Definition \ref{def_of_sol_1} for the notations. We only mention that $1/2-\kappa$ represents the number of differentiability, $p$ represents the summability, and $\theta$ is the parameter controlling the decay near the boundary.  

We also  obtain various interior H\"older regularities of solutions. For instance, (recall $\rho(x)=\text{dist}(x,\partial I)$) our result says that if $\kappa \in (\lambda, 1/2)$, $p>6(1-2\kappa)^{-1}$,   $\theta\in (0,p)$,  and 
\begin{equation}
    \label{eqn 12.16.3}
\rho^{\frac{1+\theta}{p}-1}u_0, \,\, \rho^{\frac{1+\theta}{p}} D_x u_{0}  \in L_p(\Omega\times I),
\end{equation}
 then  for any $\alpha, \beta$ satisfying 
$\frac{1}{p}<\alpha<\beta <(\frac{1}{4}-\frac{\kappa}{2}-\frac{1}{2p})$, 
\begin{equation}
     \label{eqn 12.16.2}
|\rho^{-1/2-\kappa+\frac{\theta-1}{p}}u|_{C^{\alpha-1/p}([0,T];C^{1/2-\kappa-2\beta-1/p}(I)}<\infty, \quad (a.s.). 
\end{equation}
Note that, due to Hardy's inequality,  \eqref{eqn 12.16.3}  holds for any $\theta>0$ if  $u_0 \in L_p(\Omega; \mathring{W}^1_p(I))$.
Thus if $u_0\in \cap_{p>1} L_p(\Omega; \mathring{W}^1_p(I))$  then   by  taking $\theta=1$ and $p$ sufficiently large in \eqref{eqn 12.16.2} we get  \eqref{eqn 12.16}, that is,
$$
  \rho^{-1/2-\kappa}u \in C^{\frac{1}{4}-\frac{\kappa}{2}-\varepsilon, \frac{1}{2}-\kappa-\varepsilon}_{t,x}([0,T]\times (0,1)), \quad \forall\, \kappa\in (\lambda, 1/2).
  $$
  This certainly implies (a.s.)
  $$
  \rho^{-1/2-\lambda}u\in C^{\frac{1}{4}-\frac{\lambda}{2}-\varepsilon, \frac{1}{2}-\lambda-\varepsilon}_{t,x}, \quad \forall \,  \text{small} \,\varepsilon >0,
  $$
  and for any $\varepsilon>0$,
  $$
  \sup_{t\leq T} |u(t,x)|\leq N(\omega) \rho^{1-\varepsilon}(x).
  $$
  In general, if \eqref{eqn 12.16.3} holds for a particular couple $(p,\theta)$ then one can get  the corresponding decay near the boundary from \eqref{eqn 12.16.2}.  Actually, our condition on $u_0  $ is much weaker than  \eqref{eqn 12.16.3}.

We finish the introduction with  the notation used in the article.
$\mathbb{N}$ and $\mathbb{Z}$ denote the natural number system
and the integer number system, respectively.
As usual $\bR$
stands for the set of all real numbers, and $\bR_+=\{x\in \bR: x>0\}$.  For $p \in [1,\infty)$, a normed space $F$,
and a  measure space $(X,\mathcal{M},\mu)$, $L_{p}(X,\cM,\mu;F)$
denotes the space of all $F$-valued $\mathcal{M}^{\mu}$-measurable functions
$u$ so that
\[
\left\Vert u\right\Vert _{L_{p}(X,\cM,\mu;F)}:=\left(\int_{X}\left\Vert u(x)\right\Vert _{F}^{p}\mu(dx)\right)^{1/p}<\infty,
\]
where $\mathcal{M}^{\mu}$ denotes the completion of $\cM$ with respect to the measure $\mu$. For $\alpha\in (0,1]$ and $T>0$, $C^{\alpha}([0,T];F)$ denotes the set of $F$-valued continuous functions $u$ such that
$$
|u|_{C^{\alpha}([0,T];F)}=\sup_{t\in [0,T]}|u(t)|_F+\sup_{s,t\in [0,T]} \frac{|u(t)-u(s)|_F}{|t-s|^{\alpha}}.
$$
We use  ``$:=$" to denote a definition. 
For $a,b\in \bR$, 
$a \wedge b := \min\{a,b\}$, $a \vee b := \max\{a,b\}$.
By $\cF$ and $\cF^{-1}$ we denote the one-dimensional Fourier transform and the inverse Fourier transform, respectively. That is,
$\cF(f)(\xi) := \int_{\bR} e^{-i x  \xi} f(x) dx$ and $\cF^{-1}(f)(x) := \frac{1}{2\pi}\int_{\bR} e^{ i\xi  x} f(\xi) d\xi$.

$N$ and $c$ denote  generic constants which can differ from line to line and
if we write $N=N(a,b,\ldots)$, then this means that the constant $N$ depends only on $a,b,\ldots$. Finally,
for functions  depending on $\omega$, $t$,  and $x$, the argument
$\omega \in \Omega$ will be usually omitted.

\section{Main result}

Let    $(\Omega,\cF,P)$ be a complete probability space,
$\{\cF_{t},t\geq0\}$  an increasing filtration of $\sigma$-fields
$\cF_{t}\subset\cF$ satisfying the usual conditions, and $B_t$
a standard cylindrical  Brownian motion on $L_2(I)$  relative to $\cF_t$.  

Let  $\{\eta_k : k=1,2,\cdots\}$   be an orthonormal basis on $L_2(I)$. Then  (see e.g. \cite{Dalang, kry1999analytic}) there exists a sequence of independent one-dimensional Wiener processes $\{w^k_t: k=1,2,\cdots \}$ such that $dB_t=\sum_k \eta_k dw^k_t$ and equation \eqref{main_equation} becomes
\begin{equation}
 \label{main_eqn_1}
du= (au_{xx}+bu_x+cu)\, dt+ \sum_{k=1}^{\infty} \xi |u|^{1+\lambda} \eta_k dw^k_t.
\end{equation}
Moreover, the series of stochastic integrals does not depend on the choice of $\{\eta_k\}$. Thus, without loss of generality, we may assume that $\eta_k$ is bounded for each $k$.

To present our results,  we introduce some  related function spaces and their properties. For $p>1$ and $\gamma \in \mathbb{R}$, let $H_p^\gamma(\mathbb{R})$ denote the class of all tempered distributions $u$ on $\mathbb{R}$ such that
$$ \| u \|_{H_p^\gamma} := \| (1-\Delta)^{\gamma/2} u\|_{L_p}<\infty,
$$
where 
$$ (1-\Delta)^{\gamma/2}u = \mathcal{F}^{-1}\left((1+|\xi|^2)^{\gamma/2}\mathcal{F}(u)\right).
$$
The space $H_p^\gamma(l_2) = H_p^\gamma(\mathbb{R};l_2)$ is defined similarly for $l_2$-valued functions $g=(g^1,g^2,\cdots)$, that is, 
$$ \|g\|_{H_{p}^\gamma(l_2)}:= \| | (1-\Delta)^{\gamma/2} g|_{l_2}\|_{L_p}.
$$
It is well known (see e.g. \cite[Section 8.3]{kry1999analytic}) that if $\kappa\in (0,1/2)$, then  we have
\begin{equation}
   \label{convolution}
   (1-\Delta)^{(-1/2-\kappa)/2}\, u=c(\kappa) \int_{\bR} R_{\kappa}(x-y)u(y)dy,
   \end{equation}
   where
\begin{equation} \label{kernel}
R_{\kappa}(x) :=  |x|^{-(1-2\kappa)/2}\int_0^\infty t^{-(5-2\kappa)/4} e^{-tx^2-\frac{1}{4t}} \,dt.
\end{equation}
 Observe that $R$  decays exponentially fast as $|x|\to\infty$, and behaves like $|x|^{-(1-2\kappa)/2}$ near $x=0$. Thus
 \begin{equation}
     \label{eqn 12.14.1}
 \|R_{\kappa}\|_{L_{2r}}<\infty \quad \text{if and only if }\quad r<\frac{1}{1-2\kappa}.
 \end{equation}

For $x\in I$, denote $\rho(x)=dist(x, \partial I)=\min\{x, 1-x\}$.  We  choose an infinitely differentiable function $\psi(x)$  which is comparable to $\rho(x)$ in $I$, that is, there is a constant $c>0$ such that 
\begin{equation}
     \label{eqn psi}
c^{-1}\psi(x)\leq  \rho(x)\leq c \psi(x), \quad \forall \, x\in I.
\end{equation}
Note that any smooth function $\psi$ satisfies \eqref{eqn psi} if $\psi(0)=\psi(1)=0$, $\psi>0$ in $I$, $\psi'(0)>0$, and $\psi'(1)<0$.
Next we fix
 a nonnegative function $\zeta\in C_c^\infty(\mathbb{R}_+)$ such that
\begin{equation}
   \label{eqn 11.5}
\sum_{n\in\mathbb{Z}}\zeta^p(e^n x)>c>0,\quad\forall x\in\mathbb{R}_+,
\end{equation}
where $c$ is a constant.  It is easy to check that any non-negative function $\zeta$ satisfies \eqref{eqn 11.5} if $\zeta>0$ on $[e^{n}, e^{n+1}]$ for some $n\in \bZ$.  For $x\in \bR$ and $n\in\mathbb{Z}$, define
$$ \zeta_n(x) =\begin{cases}  \zeta(e^n\psi(x)) &: \,x \in I\\
 0 &:\, x \not\in I.
 \end{cases}
$$
Note that $\zeta_n(x)=0$ if $x\in I$ and is sufficiently close to $\partial I$. Thus, 
$$
\zeta_n \in C^{\infty}_c(I), \quad \sup_{x\in \bR} |D^m_x\zeta_n(x)|\leq N e^{mn}.
$$
For any distribution $u$ on $I$, since $\zeta_n \in C^{\infty}_c(I)$, we can define $u\zeta_n$ as a distribution on $\bR$.  
For $\gamma, \theta \in \bR$ and $p>1$, let $H_{p,\theta}^\gamma(I)$ denote the set of all distributions $u$ on  $I$ such that
\begin{equation} \label{def}
\| u \|^p_{H_{p,\theta}^\gamma(I)} := \sum_{n\in\mathbb{Z}}e^{n\theta}\| \zeta_{-n}(e^n\cdot)u(e^n\cdot) \|^p_{H_p^{\gamma}}<\infty.
\end{equation}
Similarly, for $l_2$-valued functions $g=(g^1,g^2,\cdots)$, we define 
$$ \| g \|^p_{H_{p,\theta}^\gamma(I,l_2)} := \sum_{n\in\mathbb{Z}}e^{n\theta}\| \zeta_{-n}(e^n\cdot)g(e^n\cdot) \|^p_{H_p^{\gamma}(l_2)}.
$$
Denote
$$
L_{p,\theta}(I):=H^0_{p,\theta}(I), \quad L_{p,\theta}(I,l_2):=H^0_{p,\theta}(I,l_2).
$$
 The spaces $H^{\gamma}_{p,\theta}(I)$ are independent of the choice of $\psi$ and $\zeta$ (see \cite{kry1999weighted,lototsky2000sobolev}) and the norms introduced by other choices of $\psi$ and $\zeta$ are all equivalent. In particular, it is easy to check that if 
$\gamma$ is a non-negative integer then
\begin{equation} \label{weighted_prop_2}
\| u \|^p_{H_{p,\theta}^\gamma(I)}\sim\sum_{k \leq \gamma}\int_I |\rho^{k}(x)D^k u(x)|^p \rho^{\theta-1}(x)dx, 
\end{equation}
where $D^k u$ is the $k$-th derivative of $u$ with respect to $x$. Indeed, for instance if $\gamma=0$, then by \eqref{def} and  the change of variables $e^nx \to x$,
$$
\|u\|^p_{L_{p,\theta}(I)}=\int_{I}   \left[\sum_{n=-\infty}^{\infty} e^{n(\theta-1)}|\zeta(e^{-n}\psi(x))|^p\right] |u(x)|^p dx. $$
Thus for  \eqref{weighted_prop_2}  with $\gamma=0$,  we only use the fact (see \cite[Remark 1.3]{kry1999weighted} for detail) that for any 
$\eta\in C^{\infty}_c (\bR_+)$, 
$$
\sum_{n\in \bZ} e^{n(\theta-1)}|\eta(e^{-n}\psi(x))|^p\leq N  \psi^{\theta-1}(x),
$$
and the inverse inequality also holds if $\eta$ satisfies \eqref{eqn 11.5}. Similarly, one can  prove  \eqref{weighted_prop_2} with direct calculations for any non-negative integer.

\begin{remark}\label{indep_zeta}
 Since the choice of $\zeta$ does not affect the norm of $H_{p,\theta}^{\gamma}(I)$, considering $\zeta^2$ in place of $\zeta$, we can choose  $\zeta$ such that  both $\zeta$ and $\zeta^{1/2}$  
satisfy \eqref{eqn 11.5}.  Also, since $\zeta$  has compact support in $\bR_+$,  $\zeta_{-n}(x)=0$ for all  large $n$ (say for $n\geq n_0$) and thus  it is enough to  consider only  $n\leq n_0$ in the summation of \eqref{def}.  It  follows from \eqref{def} that if $\theta_1<\theta_2$ then
\begin{equation}
   \label{eqn 11.5.2}
\|u\|_{H^{\gamma}_{p,\theta_2}(I)}\leq N(\zeta, \theta_1, \theta_2,p) \|u\|_{H^{\gamma}_{p,\theta_1}(I)}.
\end{equation}

\end{remark}

For real-valued functions $a=a(x)$ and $l_2$-valued functions $b=(b^1(x),b^2(x),\cdots)$, and $n=0,1,2,\cdots$  denote
\begin{equation*}
\begin{gathered}
|a|_{C^n}=|a|_{n}=\sum_{0\leq k \leq n} \sup_{x\in I} |D^ka(x)|, \quad |b|_{C^n(l_2)}= |b|_{n,l_2}=\sum_{0\leq k\leq n} \sup_{x\in I} |D^k b(x)|_{l_2}.\end{gathered}
\end{equation*}
   Also we define interior H\"older norm
\begin{equation*}
\begin{gathered}
|a|^{(0)}_{n}=\sum_{0\leq k \leq n}  \sup_x |\psi^k  D^ka|, \quad |b|^{(0)}_{n,l_2}=\sum_{0\leq k \leq n} \sup_x |\psi^k  D^kb|_{l_2}.
\end{gathered}
\end{equation*}
Obviously, since $\psi^k$ is bounded,
$$
|a|^{(0)}_{n}\leq   N(\psi,n) |a|_n.
$$

Below we collect some other facts on the space $H_{p,\theta}^\gamma(I)$. See  \cite{kry1999weighted,  {kr99-1}, lototsky2000sobolev} for further properties of the weighted Sobolev space. For $\alpha\in\mathbb{R}$, we write $f\in\psi^{\alpha}H_{p,\theta}^\gamma(I)$ if $\psi^{-\alpha}f\in H_{p,\theta}^\gamma(I)$.

\begin{lemma}
\label{prop_of_bessel_space} Let $\theta,\gamma \in \bR$ and $p>1$. 
\begin{enumerate}[(i)]
\item The space  $C_c^\infty(I)$ is dense in $H_{p,\theta}^{\gamma}(I)$. 

\item (interior H\"oder estimate). Let $\gamma - d/p = m+\nu$ for some $m=0,1,\cdots$ and $\nu\in(0,1]$. Then for any  $i\in\{ 0,1,\cdots,m \}$, we have
\begin{equation*}
| \psi^{i+\theta/p}D^i u |_{C (I)} + | \psi^{m+\nu+\theta/p}D^m u |_{C^\nu (I)} \leq N \| u \|_{H_{p,\theta}^\gamma (I)}.
\end{equation*}
\item Let $\alpha\in\mathbb{R}$. Then $\psi^\alpha H_{p,\theta+\alpha p}^\gamma(I)=H_{p,\theta}^\gamma(I)$,
\begin{equation*}
\| u \|_{H_{p,\theta}^\gamma (I)} \leq N \| \psi^{-\alpha}u \|_{ H_{p,\theta+\alpha p}^\gamma (I) } \leq N \| u \|_{H_{p,\theta}^\gamma (I)}.
\end{equation*}

\item  The operators $D_x: H^{\gamma}_{p,\theta-p}(I) \to H^{\gamma-1}_{p,\theta}(I)$ and 
$D^2_x: H^{\gamma}_{p,\theta-p}(I) \to H^{\gamma-2}_{p,\theta+p}(I)$ are bounded. Moreover
$$
\|u_x\|_{H^{\gamma-1}_{p, \theta}(I)}+ \| u_{xx}\|_{H^{\gamma-2}_{p,\theta+p}(I)} \leq N \|u\|_{H^{\gamma}_{p,\theta-p}(I)}.
$$

\item (point-wise multiplier). If $n$ is an interger such that $n\geq|\gamma|$, we have
$$
\|a u\|_{H^{\gamma}_{p,\theta}(I)}\leq N(\gamma,n) |a|^{(0)}_{n} \|u\|_{H^{\gamma}_{p,\theta}(I)},
$$
$$
\|b u\|_{H^{\gamma}_{p,\theta}(I,l_2)}\leq N(\gamma,n) |b|^{(0)}_{n,l_2} \|u\|_{H^{\gamma}_{p,\theta}(I)}.
$$

\item (multiplicative inequality). Let 
\begin{equation*} \label{condition_of_constants_interpolation}
\begin{gathered}
\kappa\in[0,1],\quad p_i\in(1,\infty),\quad\gamma_i,\theta_i\in\mathbb{R},\quad i=0,1,\\
\gamma=\kappa\gamma_1+(1-\kappa)\gamma_0,\quad1/p=\kappa/p_1+(1-\kappa)/p_0,\\
\theta/p=\theta_1\kappa/p_1+\theta_0(1-\kappa)/p_0.
\end{gathered}
\end{equation*}
Then we have
\begin{equation*}
\|u\|_{H^\gamma_{p,\theta}(I)} \leq \|u\|^{\kappa}_{H^{\gamma_1}_{p_1,\theta_1}(I)}\|u\|^{1-\kappa}_{H^{\gamma_0}_{p_0,\theta_0}(I)}.
\end{equation*}

\item (duality). Let 
$$\gamma'=-\gamma, \quad 1/p+1/{p'}=1, \quad  \theta/p+\theta'/{p'}=1.$$
 Then $$(H^{\gamma}_{p,\theta}(I))^{*}=H^{\gamma'}_{p',\theta'}(I).
 $$

\item (complex interpolation). For $\kappa\in (0,1), p\in (1,\infty), \gamma_i, \theta_i \in \bR$, $i=0,1$,
$$
\theta=\kappa \theta_1+(1-\kappa)\theta_0, \quad \gamma=\kappa \gamma_1+(1-\kappa)\gamma_0,
$$
we have $[H^{\gamma_0}_{p,\theta_0}(I), \, H^{\gamma_1}_{p,\theta_1}(I)]_{\kappa}=H^{\gamma}_{p,\theta}(I)$.

\end{enumerate}
\end{lemma}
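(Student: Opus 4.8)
The plan is to treat Lemma \ref{prop_of_bessel_space} as a catalogue of the standard structural properties of the Krylov--Lototsky weighted Sobolev scale, and to prove each item by pushing it through the defining identity \eqref{def}, which exhibits $H^\gamma_{p,\theta}(I)$ as a weighted $\ell_p$-sum of the rescaled, cut-off pieces $u_n := \zeta_{-n}(e^n\cdot)\,u(e^n\cdot)$, each living in the translation-invariant space $H^\gamma_p(\bR)$. The guiding principle is that every assertion about $H^\gamma_{p,\theta}(I)$ should reduce to the corresponding known assertion about $H^\gamma_p(\bR)$ applied piece-by-piece, the only extra bookkeeping being the interaction of the operation in question with the dyadic cutoffs $\zeta_{-n}$ and the dilation $x\mapsto e^n x$. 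Items (i), (vi), (vii) and (viii) --- density of $C_c^\infty(I)$, the multiplicative inequality, the duality pairing, and complex interpolation --- are recorded in exactly this form in \cite{kry1999weighted, lototsky2000sobolev}, and I would invoke them directly: density follows from density of $C_c^\infty(\bR)$ in $H^\gamma_p(\bR)$ together with the support localization noted in Remark \ref{indep_zeta}, while the multiplicative, duality and interpolation statements are inherited from the matching Bessel-potential facts on $\bR$ after the weighted summation.

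For the scaling-type items (iii), (iv) and (v) I would make the dilation mechanism explicit. The point is that on the support of $x\mapsto\zeta_{-n}(e^n x)$ the quantity $\psi(e^n x)$ is comparable to $e^{n}$, so that after the change of variables $e^n x\mapsto x$ in \eqref{def} a factor $\psi^\alpha$ contributes $\sim e^{n\alpha}$; raised to the $p$-th power and absorbed into the weight $e^{n\theta}$ this turns $e^{n\theta}$ into $e^{n(\theta+\alpha p)}$, which is precisely the shift $\theta\mapsto\theta+\alpha p$ asserted in (iii). For (iv) the same change of variables shows that differentiating $u$ once produces a dilation factor $e^{-n}\sim\psi^{-1}$ at scale $n$, which both lowers the smoothness index by one on each flat piece (using the trivial boundedness of $D_x:H^\gamma_p(\bR)\to H^{\gamma-1}_p(\bR)$) and shifts the weight exponent by $+p$; iterating gives the $D^2_x$ statement. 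For (v) I would note that $D^k$ of the dilated coefficient $a(e^n\cdot)$ equals $e^{nk}(D^ka)(e^n\cdot)\sim\psi^kD^ka$ on the relevant support, so the flat-space pointwise-multiplier theorem (multiplication by a function with $n\ge|\gamma|$ bounded derivatives is bounded on $H^\gamma_p(\bR)$) applies uniformly in $n$ with constant controlled by $|a|^{(0)}_n$; the $l_2$-valued version is identical.

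The most substantive item, and the one I expect to be the main obstacle, is the interior H\"older estimate (ii), since it is the only place where a genuine embedding --- rather than a transport of structure --- is used and where the exact powers of $\psi$ must be matched. Here I would fix $x_0\in I$, work at the scale $r=\psi(x_0)$, and apply the classical one-dimensional Sobolev embedding $\|v\|_{C^m(\bR)}+[D^m v]_{C^\nu(\bR)}\le N\|v\|_{H^\gamma_p(\bR)}$, valid precisely because $\gamma-1/p=m+\nu$, to the single rescaled piece $u_n$ with $e^n\sim r^{-1}$. Undoing the dilation converts the flat H\"older norm of $u_n$ into $\psi$-weighted derivatives of $u$ at $x_0$: each spatial derivative contributes $e^{-ni}\sim\psi^i$ and the $\nu$-H\"older difference contributes $\psi^{m+\nu}$, while the single weight factor $e^{n\theta}$ from \eqref{def} supplies the $\psi^{\theta/p}$; this is exactly how the powers $\psi^{i+\theta/p}$ and $\psi^{m+\nu+\theta/p}$ in the statement arise. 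The delicate part will be ensuring that the embedding constant is uniform in $n$ and that H\"older differences across distinct dyadic shells are controlled: here I would use the finite-overlap property of the supports of the $\zeta_{-n}$ and the comparability \eqref{eqn psi} to glue the single-shell estimates, dominating the global weighted H\"older norm by the supremum over $n$ of the single-shell bound, which in turn is controlled by $\|u\|_{H^\gamma_{p,\theta}(I)}$. This embedding is established in \cite{lototsky2000sobolev, kr99-1}, so I would cite it for the gluing step while presenting the one-shell computation to make the powers of $\psi$ transparent.
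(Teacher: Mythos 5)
Your proposal is correct and follows essentially the same route as the paper: the authors simply cite Krylov \cite{kry1999weighted, kr99-1} for the half-line versions and Lototsky \cite{lototsky2000sobolev} for the extension to domains, remarking that the half-line proofs carry over once $\zeta(x)$ is replaced by $\zeta_{-n}(e^n x)=\zeta(e^{-n}\psi(e^n x))$. Your additional unpacking of the dilation bookkeeping (how $\psi\sim e^n$ on each shell converts $\psi^\alpha$ into the weight shift $\theta\mapsto\theta+\alpha p$, and how derivatives trade $e^{-n}$ factors for powers of $\psi$) is a faithful elaboration of exactly that remark.
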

\begin{proof}
All the results above are proved by Krylov \cite{kry1999weighted, {kr99-1}} in the half line (or half space)  and extended to general domain by   Lototsky \cite{lototsky2000sobolev}.
In the half line, the space $H^{\gamma}_{p,\theta}(\bR_+)$ is defined by (formally) taking $\psi(x)=x$ so that $\zeta_{-n}(e^nx)=\zeta(x)$ and \eqref{def} becomes
$$
\| u \|^p_{H_{p,\theta}^\gamma(\bR_+)} := \sum_{n\in\mathbb{Z}}e^{n\theta}\| \zeta(x)u(e^n\cdot) \|^p_{H_p^{\gamma}}.
$$
We also remark that all the Krylov's proofs in \cite{kry1999weighted, {kr99-1}} work  in $I$ with almost no changes  if one replaces $\zeta(x)$ there by $\zeta_{-n}(e^nx)=\zeta(e^{-n}\psi (e^nx))$.
\end{proof}

\begin{corollary} \label{interpol_cor}
(i) Let $\gamma_1<\gamma_2<\gamma_3$ and $\theta_1>\theta_2>\theta_3$. Then for any $\varepsilon>0$, there exists $N = N(\gamma_i,\theta_i,\varepsilon)>0$ $(i = 1,2,3)$ such that
\begin{equation*}
\| u \|_{H_{p,\theta_2}^{\gamma_2}(I)} \leq \varepsilon \| u \|_{H_{p,\theta_3}^{\gamma_3}(I)} + N\| u \|_{H_{p,\theta_1}^{\gamma_1}(I)}.
\end{equation*}

(ii) Let $\gamma, \theta\in \bR$ and $1<p<q$. Then 
\begin{equation}
    \label{eqn 12.4.1}
H^{\gamma}_{q,\theta}(I) \subset H^{\gamma}_{p,\theta}(I).
\end{equation}
\end{corollary}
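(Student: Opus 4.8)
The two parts rest on entirely different mechanisms, so I treat them separately.

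\textbf{Part (i).} The plan is to deduce the additive ($\varepsilon$-)inequality from its multiplicative companion
\begin{equation}
\label{GN}
\| u \|_{H_{p,\theta_2}^{\gamma_2}(I)} \le N \, \| u \|_{H_{p,\theta_3}^{\gamma_3}(I)}^{\kappa}\,\| u \|_{H_{p,\theta_1}^{\gamma_1}(I)}^{1-\kappa}
\end{equation}
for a suitable $\kappa\in(0,1)$, since once \eqref{GN} is in hand Young's inequality $a^{\kappa}b^{1-\kappa}\le \varepsilon a+N(\varepsilon)b$ gives the claim at once. To prove \eqref{GN} I would invoke the multiplicative inequality of Lemma \ref{prop_of_bessel_space}(vi) with $p_0=p_1=p$; with equal integrability it produces the norm at the point of the $(\gamma,\theta)$-plane lying on the \emph{segment} joining $(\gamma_1,\theta_1)$ and $(\gamma_3,\theta_3)$. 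The one genuinely nontrivial point is that the target $(\gamma_2,\theta_2)$ need not lie on that segment, only inside the order-rectangle cut out by the endpoints (we are given $\gamma_1<\gamma_2<\gamma_3$ and $\theta_3<\theta_2<\theta_1$, so it lies in the interior but off the diagonal in general).

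I would resolve this by a case split governed by the height of the segment above $\gamma_2$, namely $\theta^{\ast}:=\big[(\gamma_3-\gamma_2)\theta_1+(\gamma_2-\gamma_1)\theta_3\big]/(\gamma_3-\gamma_1)$. If $\theta^{\ast}\le\theta_2$, apply (vi) with endpoints $(\gamma_3,\theta_3),(\gamma_1,\theta_1)$ and $\kappa=(\gamma_2-\gamma_1)/(\gamma_3-\gamma_1)$ to control $\|u\|_{H^{\gamma_2}_{p,\theta^{\ast}}(I)}$, and then \emph{raise} the weight from $\theta^{\ast}$ up to $\theta_2$ at no cost via the monotonicity \eqref{eqn 11.5.2}; this is \eqref{GN}. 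If instead $\theta^{\ast}>\theta_2$, apply (vi) with $\kappa=(\theta_1-\theta_2)/(\theta_1-\theta_3)$ to land at the point $(\gamma^{\ast\ast},\theta_2)$ of the segment at height $\theta_2$, where a one-line check gives $\gamma^{\ast\ast}\ge\gamma_2$, and then \emph{lower} the smoothness from $\gamma^{\ast\ast}$ to $\gamma_2$ via the trivial same-weight embedding $H_{p,\theta_2}^{\gamma^{\ast\ast}}(I)\subset H_{p,\theta_2}^{\gamma_2}(I)$, inherited termwise from $\|\cdot\|_{H_p^{\gamma_2}}\le\|\cdot\|_{H_p^{\gamma^{\ast\ast}}}$ on $\bR$. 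In both branches the strict orderings are exactly what force $\kappa\in(0,1)$ and send the auxiliary embedding in the admissible direction; this is the crux of (i).

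\textbf{Part (ii).} Here I would work straight from the definition \eqref{def}. Put $g_n:=\zeta_{-n}(e^n\cdot)u(e^n\cdot)$; everything reduces to a uniform (in $n$) Euclidean embedding $\|g_n\|_{H_p^{\gamma}}\le N\|g_n\|_{H_q^{\gamma}}$ followed by a summation. For the embedding, observe that each $g_n$ is supported, up to translation, in a fixed bounded set: on the half-line ($\psi(x)=x$) one has $g_n=\zeta\cdot u(e^n\cdot)$, supported in $\operatorname{supp}\zeta$, while for $I=(0,1)$ the support splits into two intervals of uniformly bounded length, one near each endpoint, which one treats separately using translation invariance of $H_p^{\gamma}(\bR)$ and the pointwise multiplier Lemma \ref{prop_of_bessel_space}(v). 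For functions supported in a fixed bounded interval, the embedding $H_q^{\gamma}(\bR)\hookrightarrow H_p^{\gamma}(\bR)$ with $q>p$ is standard: cut off with $\chi\equiv1$ on the support, estimate the near part by H\"older on the bounded set and the far part through the exponential decay of the Bessel kernel away from the origin. This delivers $N$ independent of $n$.

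With the uniform bound, \eqref{def} gives $\|u\|_{H^{\gamma}_{p,\theta}(I)}^p\le N^p\sum_n e^{n\theta}\|g_n\|_{H_q^{\gamma}}^p$, and it remains to dominate this by $\big(\sum_n e^{n\theta}\|g_n\|_{H_q^{\gamma}}^q\big)^{p/q}=\|u\|_{H^{\gamma}_{q,\theta}(I)}^p$. This is precisely the inclusion $\ell^q(\mu)\subset\ell^p(\mu)$ for the weighted counting measure $\mu(\{n\})=e^{n\theta}$, which follows from H\"older's inequality as soon as the total mass $\sum_{n\le n_0}e^{n\theta}$ is finite; here I use that $g_n=0$ for $n>n_0$ by Remark \ref{indep_zeta}. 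I regard this summation as the crux of (ii), the Euclidean embedding being routine. It is worth flagging that finiteness of $\sum_{n\le n_0}e^{n\theta}$, and hence the whole inclusion, requires $\theta>0$: for $\gamma=0$, \eqref{weighted_prop_2} identifies the norm with that of $L_p(I,\rho^{\theta-1}dx)$, whose underlying measure is infinite when $\theta\le0$, so the hypothesis should be read as $\theta>0$ (the range in which the statement is applied).
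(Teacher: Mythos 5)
Part (i) of your proof is, up to relabeling, exactly the paper's argument: interpolate along the segment joining $(\gamma_1,\theta_1)$ and $(\gamma_3,\theta_3)$ via Lemma \ref{prop_of_bessel_space}(vi) with $p_0=p_1=p$, split into cases according to which side of that segment the target $(\gamma_2,\theta_2)$ lies on, close one case with the weight monotonicity \eqref{eqn 11.5.2} and the other with the same-weight smoothness embedding, and finish with Young's inequality. (The paper organizes the case split by comparing $\gamma_2$ with the smoothness of the segment point at height $\theta_2$, you by comparing $\theta_2$ with the weight of the segment point at smoothness $\gamma_2$; these are the same two cases.)

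Part (ii) is where you genuinely diverge. The paper first asserts that, replacing $u$ by $\psi^{(\theta-1)/p}u$, one may assume $\theta=1$; it then obtains the integer-$\gamma$ case from \eqref{weighted_prop_2} and H\"older's inequality, extends to all $\gamma\ge 0$ by complex interpolation (Lemma \ref{prop_of_bessel_space}(viii)), and treats $\gamma<0$ by duality (Lemma \ref{prop_of_bessel_space}(vii)). You instead work straight from \eqref{def}: a uniform-in-$n$ embedding $\|g_n\|_{H^{\gamma}_p}\le N\|g_n\|_{H^{\gamma}_q}$ for the compactly supported pieces $g_n$, followed by $\ell^q(\mu)\subset\ell^p(\mu)$ for the finite measure $\mu(\{n\})=e^{n\theta}$, $n\le n_0$. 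Both routes are viable; yours bypasses the interpolation and duality machinery and handles all $\gamma$ at once, at the price of the (standard but fiddly) compact-support embedding, including the two-piece splitting of $\operatorname{supp} g_n$ near the two endpoints of $I$. More importantly, your closing remark about $\theta>0$ is a genuine catch rather than a pedantic one: the paper's reduction to $\theta=1$ does not actually go through, since the isomorphism $u\mapsto\psi^{(\theta-1)/p}u$ normalizes the weight of the $H^{\gamma}_{p,\cdot}$-norm to $1$ but sends the weight of the $H^{\gamma}_{q,\cdot}$-norm to $\theta+q(1-\theta)/p\neq 1$ unless $p=q$ or $\theta=1$. Indeed the inclusion \eqref{eqn 12.4.1} fails for $\theta\le 0$, already for $\gamma=0$: taking $u=\sum_k c_k 1_{[2^{-k-1},2^{-k}]}$ with $c_k=k^{-1/p}$ gives $u\in L_{q,0}(I)\setminus L_{p,0}(I)$ by \eqref{weighted_prop_2}. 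So the hypothesis ``$\gamma,\theta\in\bR$'' should read ``$\theta>0$''; this is harmless for the paper, since every application of the corollary has $\theta>0$, but your version of the statement and its proof is the correct one.
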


\begin{proof}
(i) Take $\kappa \in(0,1)$ such that 
$\theta_2 = \kappa \theta_3+(1-\kappa)\theta_1$, and put $\gamma_0 := \kappa \gamma_3+(1-\kappa)\gamma_1$. 
Then by Lemma \ref{prop_of_bessel_space} (vi), 
\begin{equation*}
\| u \|_{H_{p,\theta_2}^{\gamma_0}(I)} \leq \| u \|^{1-\kappa}_{H_{p,\theta_3}^{\gamma_3}(I)}\| u \|^{\kappa}_{H_{p,\theta_1}^{\gamma_1}(I)}.
\end{equation*}
Therefore, the claim follows if $\gamma_2\leq\gamma_0$. Now, assume $\gamma_2>\gamma_0$. Take $\kappa'\in(0,1)$ such that
\begin{equation*}
\gamma_2 = \kappa'\gamma_3+(1-\kappa')\gamma_1.
\end{equation*}
Since $\gamma_3>\gamma_1$ and $\kappa'\gamma_3+(1-\kappa')\gamma_1=\gamma_2>\gamma_0=\kappa\gamma_3+(1-\kappa)\gamma_1$, we have $\kappa'>\kappa$. Thus,
\begin{equation*}
\theta':=\kappa'\theta_3+(1-\kappa')\theta_1<\kappa\theta_3+(1-\kappa)\theta_1=\theta_2.
\end{equation*}
Therefore, by \eqref{eqn 11.5.2} and Lemma \ref{prop_of_bessel_space} (vi),
\begin{equation*}
\begin{aligned}
\| u \|_{H_{p,\theta_2}^{\gamma_2}(I)} \leq N \| u \|_{H_{p,\theta'}^{\gamma_2}(I)} \leq N \| u \|_{H_{p,\theta_3}^{\gamma_3}(I)}^{1-\kappa'}\| u \|_{H_{p,\theta_1}^{\gamma_1}(I)}^{\kappa'}.
\end{aligned}
\end{equation*}

(ii) Due to Lemma \ref{prop_of_bessel_space} (iii), for any $p'>1$,  $u\in H^{\gamma}_{p',\theta}(I)$ if and only if $\psi^{(\theta-1)/p}u\in H^{\gamma}_{p',1}(I)$. Therefore, considering 
$\psi^{(\theta-1)/p}u$ in place of $u$, we may assume that $\theta=1$.

 By \eqref{weighted_prop_2} and H\"older inequality, we  get \eqref{eqn 12.4.1} if $\gamma$ is any non-negative integer. The result for non-negative integers  and  complex interpolation (Lemma \ref{prop_of_bessel_space} (viii)) prove  \eqref{eqn 12.4.1} for any $\gamma\geq 0$.  The case $\gamma<0$  follows from  the duality.   Indeed,
 $$
 H^{\gamma}_{q,1}(I)=(H^{-\gamma}_{q',1}(I))^{*}\subset (H^{-\gamma}_{p',1}(I))^{*}= H^{\gamma}_{p,1}(I),
 $$
 where $1/p+1/{p'}=1, 1/q+1/{q'}=1$. For the second relation above we use the  result for $\gamma\geq 0$. The
corollary is proved.
\end{proof}

Now we define stochastic Banach spaces.  Let $\mathcal{P}$ denote the predictable $\sigma$-field related  to $\cF_t$.   For a stopping time $\tau$, define 
$$\opar0,\tau\cbrk:=\{ (\omega,t):0<t\leq\tau(\omega) \},
$$
$$ \mathbb{H}_{p,\theta}^{\gamma}(I,\tau) := L_p(\opar0,\tau\cbrk, \mathcal{P}, dP \times dt ; H_{p,\theta}^\gamma(I)),
$$
$$
\mathbb{H}_{p,\theta}^{\gamma}(I,\tau,l_2) := L_p(\opar0,\tau\cbrk,\mathcal{P}, dP \times dt;H_{p,\theta}^\gamma(I,l_2)),1
$$
$$
U_{p,\theta}^{\gamma}(I) :=  L_p(\Omega,\cF_0, dP ; H_{p,\theta+2-p}^{\gamma-2/p}(I)),
$$
where, for instance,
\begin{equation*}
\| u \|^p_{\bH^{\gamma}_{p,\theta}(I,\tau)} := \mathbb{E} \int^{\tau}_0 \| u(t) \|^p_{H^{\gamma}_{p,\theta}(I)}dt. 
\end{equation*}

\begin{definition} \label{def_of_sol_1}
For $u\in \mathbb{H}_{p,\theta-p}^{\gamma+2}(I,\tau)$, we write $u\in\mathfrak{H}_{p,\theta}^{\gamma+2}(I,\tau)$ if $u_0\in U_{p,\theta}^{\gamma+2}(I)$ and there exists $(f,g)\in
\mathbb{H}_{p,\theta+p}^{\gamma}(I,\tau)\times\mathbb{H}_{p,\theta}^{\gamma+1}(I,\tau,l_2)$ so that
\begin{equation*}
du = fdt+\sum_{k=1}^{\infty} g^k dw_t^k,\quad \,  t\in (0, \tau]\cap (0,\infty)\,; \quad u(0,\cdot) = u_0
\end{equation*}
in the sense of distributions, i.e., for any $\phi\in C_c^\infty(I)$, the equality
\begin{equation} \label{def_of_sol_2}
(u(t,\cdot),\phi) = (u_0,\phi) + \int_0^t(f(s,\cdot),\phi)ds + \sum_{k=1}^{\infty} \int_0^t(g^k(s,\cdot),\phi)dw_s^k
\end{equation}
holds for all $t\in [0,\tau]\cap [0,\infty)$ (a.s.).  In this case, we write
\begin{equation*}
\mathbb{D}u:= f,\quad \mathbb{S}u:=g.
\end{equation*}
The norm in $\mathfrak{H}_{p,\theta}^{\gamma+2}(I,\tau)$ is introduced by
\begin{equation*}
\begin{aligned}
\| u \|_{\mathfrak{H}_{p,\theta}^{\gamma+2}(I,\tau)} &:= \| u \|_{\mathbb{H}_{p,\theta-p}^{\gamma+2}(I,\tau)}+ \| \mathbb{D}u \|_{\mathbb{H}_{p,\theta+p}^{\gamma}(I,\tau)} \\
&\quad+ \| \mathbb{S}u \|_{\mathbb{H}_{p,\theta}^{\gamma+1}(I,\tau,l_2)} + \| u(0,\cdot) \|_{U_{p,\theta}^{\gamma+2}(I)}.
\end{aligned}
\end{equation*}
We write $u\in\mathfrak{H}_{p,\theta,loc}^{\gamma+2}(I,\tau)$ if there exists a sequence of bounded stopping times $\tau_n\uparrow \tau$ so that $u\in\mathfrak{H}_{p,\theta}^{\gamma+2}(I,\tau_n)$ for each $n$.  We say $u=v$ in $\mathfrak{H}_{p,\theta,loc}^{\gamma+2}(I,\tau)$ if there are bounded stopping times $\tau_n \uparrow \tau$ such that $u=v$ in $\mathfrak{H}_{p,\theta}^{\gamma+2}(I,\tau_n)$ for each $n$.
\end{definition}

\begin{remark}
\label{remark 12.4}
In Definition \ref{def_of_sol_1}, the spaces for $u$, $\bD u$ and $\bS u$ are defined in  such a way  that the operators
$$
D^2_x: u \to \bD u, \quad D_x: u\to \bS u
$$
are bounded.  To be more specific,   consider  the simple equation
$$
du=a u_{xx} \, dt+ \sum_{k=1}^{\infty} \sigma^k u_x \, dw^k_t=:\bD u dt +\bS^k u \,dw^k_t.
$$
By Lemma \ref{prop_of_bessel_space} (iii)-(v),   if $a,\sigma$ are sufficiently smooth and $u\in \mathbb{H}_{p,\theta-p}^{\gamma+2}(I,\tau)$, 
$$
\bD u= au_{xx} \in  \mathbb{H}_{p,\theta+p}^{\gamma}(I,\tau),\quad 
\bS u=\sigma u_x \in  \mathbb{H}_{p,\theta}^{\gamma+1}(I,\tau, l_2).
$$
\end{remark}

\begin{remark}
Let $p\geq 2$ and $\gamma, \theta\in \bR$. Then  for any $g\in \bH^{\gamma+1}_{p,\theta}(I,\tau,l_2)$, the series of stochastic integral in \eqref{def_of_sol_2} converges uniformly in $t$ in probability on $[0,\tau \wedge T]$ for any $T$, and consequently $(u(t,\cdot),\phi)$ is continuous in $t$. See e.g. \cite[Remark 3.2]{kry1999analytic} for details.
\end{remark}

\begin{remark}
     \label{remark 12.11.1}
    (i) By Lemma \ref{prop_of_bessel_space} (iii),  $\|u\|^p _{\mathfrak{H}_{p,\theta}^{\gamma+2}(I,\tau)} $ is equivalent to 
     \begin{align*}
       \|\psi^{-1}u\|^p_{\bH^{\gamma+2}_{p,\theta}(I,\tau)}+\|\psi \bD u\|^p_{\bH^{\gamma}_{p,\theta}(I,\tau)}
     &+\|\bS u\|^p_{\bH^{\gamma+1}_{p,\theta}(I,\tau,l_2)} \\
     &+\bE\|\psi^{2/p-1}u(0,\cdot)\|^p_{H^{\gamma+2-2/p}_{p,\theta}(I)}.
     \end{align*}
     It follows from \eqref{eqn 11.5.2}  and \eqref{eqn 12.4.1} that if $q \geq p$ and $\theta'\leq \theta$, then 
     \begin{equation}
         \label{eqn 12.11.5}
     \mathfrak{H}_{q,\theta'}^{\gamma+2}(I,\tau)  \subset  \mathfrak{H}_{q,\theta}^{\gamma+2}(I,\tau) \subset     \mathfrak{H}_{p,\theta}^{\gamma+2}(I,\tau).
     \end{equation}

(ii) Note that if $u_0 \in U^{\gamma+2}_{p,\theta}(I)$, then $ \bE \|\psi^{2\beta-1}u_0\|^p_{H^{\gamma+2-2\beta}_{p,\theta}(I)}<\infty$ if $\beta\geq 1/p$. Obviously, this is necessary for \eqref{Bessel_embedding}.
\end{remark}

\begin{thm} \label{embedding}

 (i) For any $p\geq 2$, $\gamma, \theta\in \bR$, $\mathfrak{H}_{p,\theta}^{\gamma+2}(I,\tau)$ is a Banach space.
 
 (ii)  Let $\tau\leq T$, $p>2$ and $1/2>\beta>\alpha>1/p$. Then for any  $u\in\mathfrak{H}_{p,\theta}^{\gamma+2}(I,\tau)$ we have $u\in C^{\alpha-1/p}([0,\tau];H_{p,\theta}^{\gamma+2-2\beta} (I))$ (a.s.) and
\begin{equation} 
                    \label{Bessel_embedding}
\mathbb{E}| \psi^{2\beta-1}u |^p_{C^{\alpha-1/p}([0,\tau];H_{p,\theta}^{\gamma+2-2\beta} (I))} \leq N(d,p,\alpha,\beta,T)\| u \|^p_{\mathfrak{H}_{p,\theta}^{\gamma+2}(I,\tau)}.
\end{equation}

(iii) If $p=2$, then \eqref{Bessel_embedding} holds with $\beta = 1/2$ and $\alpha = 1/p = 1/2$, i.e.,  $u\in C([0,\tau];H_{2,\theta}^{\gamma+1} (I))$ (a.s.), and
\begin{equation}
   \label{eqn 11.27.1}
 \mathbb{E}\sup_{t\leq \tau}\| u \|^2_{H_{2,\theta}^{\gamma+1} (I)}\leq N\| u \|_{\mathfrak{H}_{2,\theta}^{\gamma+2}(I,\tau)}^2.
\end{equation}
(iv) If $p\geq 2$ and $\tau\equiv T$, then for  any $t\leq T$,
\begin{equation}
    \label{eqn 11.27.3}
\|u\|^p_{\bH^{\gamma+1}_{p,\theta}(I,t)}\leq N(T,p,\theta,\gamma)\int^t_0 \|u\|^p_{\mathfrak{H}^{\gamma+2}_{p,\theta}(I,s)}\,ds.
\end{equation}
\begin{proof}
All the results are proved by  Krylov \cite{kry2001} on half spaces and then extended in e.g. \cite[Theorem 2.3]{kim2017regularity} to general $C^1$ domains. We only mention that  (iv) is an easy consequence of \eqref{eqn 11.5.2}, \eqref{Bessel_embedding}, and \eqref{eqn 11.27.1}.   Indeed,
\begin{eqnarray*}
\|u\|^p_{\bH^{\gamma+1}_{p,\theta}(I,t)}&=&\int^t_0 \bE\|u(s)\|^p_{H^{\gamma+1}_{p,\theta}(I)}ds\\
&\leq& \int^t_0 \bE \sup_{r\leq s}\|u(r)\|^p_{H^{\gamma+1}_{p,\theta}(I)}\,ds \leq N\int^t_0 \|u\|^p_{\mathfrak{H}^{\gamma+2}_{p,\theta}(I,s)}\,ds.
\end{eqnarray*}
 \end{proof}

\end{thm}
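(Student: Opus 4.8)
The plan is to treat (i), (iii) and (iv) as standard consequences of the functional-analytic set-up, and to concentrate the real work on the H\"older embedding (ii). Throughout, the boundary weights will be handled by Lemma~\ref{prop_of_bessel_space}~(iii), which lets one trade powers of $\psi$ for shifts of $\theta$, and the passage from the interval $I$ to a model domain (a half-line, or $\bR$) will be carried out through the dyadic partition $\{\zeta_n\}$ built into the definition \eqref{def} of $H^{\gamma}_{p,\theta}(I)$; on each piece one is reduced to a constant-coefficient model and the estimates are summed over $n$.

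For (i), given a Cauchy sequence $u_m$ in $\mathfrak{H}^{\gamma+2}_{p,\theta}(I,\tau)$, the defining norm forces $u_m\to u$ in $\bH^{\gamma+2}_{p,\theta-p}(I,\tau)$, $\bD u_m\to f$ in $\bH^{\gamma}_{p,\theta+p}(I,\tau)$, $\bS u_m\to g$ in $\bH^{\gamma+1}_{p,\theta}(I,\tau,l_2)$, and $u_m(0,\cdot)\to v_0$ in $U^{\gamma+2}_{p,\theta}(I)$, each target space being complete. The remaining task is to pass to the limit in the weak formulation \eqref{def_of_sol_2}; since $p\geq 2$, the stochastic term is controlled by a Burkholder--Davis--Gundy bound in $\bH^{\gamma+1}_{p,\theta}(I,\tau,l_2)$, so the limit solves \eqref{def_of_sol_2} with data $(f,g,v_0)$, whence $\bD u=f$, $\bS u=g$, $u(0,\cdot)=v_0$, and $u_m\to u$ in $\mathfrak{H}^{\gamma+2}_{p,\theta}(I,\tau)$.

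The heart of the matter is (ii). The key device I would use is that membership in $\mathfrak{H}$ turns $u$ into the solution of a genuine heat equation with right-hand side in the correct spaces: setting $\bar f:=\bD u-u_{xx}$, Lemma~\ref{prop_of_bessel_space}~(iv) gives $u_{xx}\in\bH^{\gamma}_{p,\theta+p}(I,\tau)$ while Definition~\ref{def_of_sol_1} gives $\bD u\in\bH^{\gamma}_{p,\theta+p}(I,\tau)$, so $\bar f\in\bH^{\gamma}_{p,\theta+p}(I,\tau)$ and
\begin{equation*}
du=(u_{xx}+\bar f)\,dt+\sum_k (\bS u)^k\,dw^k_t,\qquad u(0,\cdot)=u_0 .
\end{equation*}
After localizing by $\{\zeta_n\}$ one may then represent $u$ through the heat semigroup $P_t$,
\begin{equation*}
u(t)=P_t u_0+\int_0^t P_{t-s}\bar f(s)\,ds+\int_0^t P_{t-s}\,\bS u(s)\,dw_s ,
\end{equation*}
and the whole point of sacrificing $2\beta$ spatial derivatives is that $P_t$ recovers them, $\|P_t\|_{H^{a}\to H^{a+2\delta}}\leq N t^{-\delta}$. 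The deterministic terms are estimated by standard parabolic smoothing, while the stochastic convolution is handled by the Da Prato--Kwapie\'n--Zabczyk factorization combined with Burkholder--Davis--Gundy: choosing the factorization exponent near $\beta$, one first bounds the factorized process in $L_p$ in time (this is where $p>2$ and $\alpha>1/p$ enter) and then regains $2\beta$ derivatives at the cost of the weight $\psi^{2\beta-1}$, arriving at $\psi^{2\beta-1}u\in L_p(\Omega;W^{\alpha,p}([0,\tau];H^{\gamma+2-2\beta}_{p,\theta}(I)))$; the time Sobolev embedding $W^{\alpha,p}\hookrightarrow C^{\alpha-1/p}$, valid since $\alpha>1/p$, then yields \eqref{Bessel_embedding}. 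For (iii) I would use the Hilbert endpoint $p=2$: applying It\^o's formula to $\|u(t)\|^2_{H^{\gamma+1}_{2,\theta}(I)}$, estimating the martingale part by Burkholder--Davis--Gundy and closing with Gronwall produces the energy bound \eqref{eqn 11.27.1} together with continuity in time but no H\"older exponent, which is exactly why $\alpha=\beta=1/2$ is admissible there.

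Finally, (iv) is the elementary consequence already indicated: with $\tau\equiv T$, Fubini gives $\|u\|^p_{\bH^{\gamma+1}_{p,\theta}(I,t)}=\int_0^t\bE\|u(s)\|^p_{H^{\gamma+1}_{p,\theta}(I)}\,ds\leq\int_0^t\bE\sup_{r\leq s}\|u(r)\|^p_{H^{\gamma+1}_{p,\theta}(I)}\,ds$, and the inner term is dominated by $N\|u\|^p_{\mathfrak{H}^{\gamma+2}_{p,\theta}(I,s)}$ through \eqref{Bessel_embedding} when $p>2$ and \eqref{eqn 11.27.1} when $p=2$, after \eqref{eqn 11.5.2} is used to match the $H^{\gamma+1}$ target norm with the embedding one. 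I expect the genuine obstacle to be precisely the gain of $2\beta$ spatial derivatives in (ii): it is invisible at the level of the abstract semimartingale structure and really requires the heat-kernel representation, the factorization estimate, and careful bookkeeping of the boundary weight $\psi^{2\beta-1}$ as the localized pieces are summed over $n$.
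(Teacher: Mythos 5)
Your part (iv) is exactly the paper's argument, and your completeness argument for (i) and the It\^o-formula sketch for (iii) are consistent in spirit with the cited sources. For (ii), however, the paper does not prove anything: it simply invokes Krylov \cite{kry2001} (half space) and \cite[Theorem 2.3]{kim2017regularity} ($C^1$ domains). The proofs cited there do \emph{not} use the semigroup representation and the Da Prato--Kwapie\'n--Zabczyk factorization that you propose; they estimate the increment $u(t)-u(s)$ directly from the semimartingale decomposition, trade $2\beta$ derivatives for powers of $|t-s|$ via the multiplicative inequality $\|v\|_{H^{\gamma+2-2\beta}}\leq\|v\|^{\beta}_{H^{\gamma}}\|v\|^{1-\beta}_{H^{\gamma+2}}$ (Lemma \ref{prop_of_bessel_space}(vi)), and conclude with a Kolmogorov/Garsia-type criterion ($W^{\alpha,p}\hookrightarrow C^{\alpha-1/p}$ in time). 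Crucially, in this route the weight $\psi^{2\beta-1}$ is not produced by any smoothing estimate: it is the interpolation of the weights already built into the $\mathfrak{H}$-norm, since $\psi^{-1}u\in\bH^{\gamma+2}_{p,\theta}$ and $\psi\,\bD u\in\bH^{\gamma}_{p,\theta}$ (Remark \ref{remark 12.11.1}), and $2\beta-1=\beta\cdot 1+(1-\beta)\cdot(-1)$ matches $\gamma+2-2\beta=\beta\gamma+(1-\beta)(\gamma+2)$. Your attribution of the weight to the factorization step misses this mechanism.

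This points to the genuine gaps in your version of (ii). First, the statement holds for \emph{all} $\gamma,\theta\in\bR$, but the representation $u(t)=P_tu_0+\int_0^tP_{t-s}\bar f\,ds+\int_0^tP_{t-s}\bS u\,dw_s$ through the Dirichlet heat semigroup on $I$ requires a solvability-and-uniqueness theory in $H^{\gamma}_{p,\theta}(I)$, which is available only for a restricted range of $\theta$ (cf.\ the hypothesis $\theta\in(0,p)$ in Theorem \ref{general_g_linear}); for general $\theta$ the representation is not justified, whereas the increment approach needs no solvability at all. Second, ``localizing by $\{\zeta_n\}$'' to reduce to $\bR$ is not innocent here: the spatial dilation $x\mapsto e^nx$ in \eqref{def} must be accompanied by the parabolic time dilation $t\mapsto e^{2n}t$ to produce a normalized heat equation, and one must track how the resulting H\"older seminorms on the stretched time intervals recombine uniformly in $n$ — this is precisely the delicate bookkeeping carried out in \cite{kry2001} and is absent from your sketch. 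As written, your argument for (ii) is a plausible program but not a proof; either supply the weighted semigroup estimates with the correct $\theta$-shifts and the dilation bookkeeping, or (more economically) follow the increment-plus-interpolation route of \cite{kry1999analytic} and \cite{kry2001}, which is what the paper is implicitly relying on.
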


\begin{corollary} \label{embedding_corollary}
Let $\tau\leq T$, $\kappa\in (0,1/2)$, and 
\begin{equation} \label{condition_for_alpha_beta_2}
\frac{1}{p}<\alpha<\beta<  \frac{1}{4}-\frac{\kappa}{2}-\frac{1}{2p}.
\end{equation}
Then for any $\delta \in [0, \frac{1}{2}-\kappa-2\beta-\frac{1}{p})$
we have
\begin{equation} \label{embedding_remark}
\mathbb{E} |\psi^{-1/2-\kappa-1/p+\theta/p-\delta}u|^p_{C^{\alpha-1/p}([0,\tau];C^{1/2-\kappa-2\beta-1/p-\delta} (I))}\leq N\|u\|_{\mathfrak{H}^{1/2-\kappa}_{p,\theta} (I,\tau)}^p,
\end{equation}
where $N = N(d,p,\alpha,\beta, \kappa,T)$. 
\end{corollary}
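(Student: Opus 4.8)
The plan is to read the corollary off the time-regularity estimate of Theorem~\ref{embedding}(ii) combined with the interior Hölder embedding of Lemma~\ref{prop_of_bessel_space}(ii), the only real work being a bookkeeping of the $\psi$-weights. First I would note that \eqref{condition_for_alpha_beta_2} already forces $\frac1p<\frac14-\frac\kappa2-\frac1{2p}$, i.e.\ $p>\frac{6}{1-2\kappa}>2$, and $\beta<\frac14<\frac12$; hence the hypotheses $p>2$ and $\frac12>\beta>\alpha>\frac1p$ of Theorem~\ref{embedding}(ii) are satisfied. Applying that theorem with $\gamma+2=\frac12-\kappa$ shows that $v:=\psi^{2\beta-1}u$ belongs to $C^{\alpha-1/p}([0,\tau];H^{1/2-\kappa-2\beta}_{p,\theta}(I))$ (a.s.) and
\begin{equation*}
\mathbb{E}\,|v|^p_{C^{\alpha-1/p}([0,\tau];H^{1/2-\kappa-2\beta}_{p,\theta}(I))}\le N\,\|u\|^p_{\mathfrak{H}^{1/2-\kappa}_{p,\theta}(I,\tau)}.
\end{equation*}

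Next I would pass from the spatial Sobolev space to a spatial Hölder space. Set $\gamma':=\frac12-\kappa-2\beta$ and $\gamma'':=\gamma'-\delta=\frac12-\kappa-2\beta-\delta$, so that (with $d=1$) $\gamma''-\frac1p=\nu$, where $\nu:=\frac12-\kappa-2\beta-\frac1p-\delta$. The range $\delta\in[0,\frac12-\kappa-2\beta-\frac1p)$ guarantees $\nu\in(0,\mu]$ with $\mu:=\gamma'-\frac1p\in(0,\frac12)$, so in Lemma~\ref{prop_of_bessel_space}(ii) one has $m=0$ and fractional part $\nu\in(0,1)$. Since lowering the smoothness index keeps the weighted space (same $p,\theta$) larger, which follows directly from the definition \eqref{def} and the elementary embedding $H^{\gamma'}_p(\bR)\subset H^{\gamma''}_p(\bR)$, we have $\|h\|_{H^{\gamma''}_{p,\theta}(I)}\le N\|h\|_{H^{\gamma'}_{p,\theta}(I)}$, and Lemma~\ref{prop_of_bessel_space}(ii) with $i=m=0$ then gives, for every $h\in H^{\gamma'}_{p,\theta}(I)$,
\begin{equation*}
|\psi^{\nu+\theta/p}h|_{C^{\nu}(I)}\le N\|h\|_{H^{\gamma''}_{p,\theta}(I)}\le N\|h\|_{H^{\gamma'}_{p,\theta}(I)}.
\end{equation*}
The decisive bookkeeping step is that the weight matches exactly: writing $w:=-\frac12-\kappa-\frac1p+\frac\theta p-\delta$, a direct computation gives $\nu+\frac\theta p+(2\beta-1)=w$, so that $\psi^{\nu+\theta/p}v=\psi^{w}u$, which is precisely the quantity appearing in \eqref{embedding_remark}.

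Finally I would transfer the fixed, $t$-independent bounded linear map $h\mapsto\psi^{\nu+\theta/p}h$ through the time-Hölder norm. Applying the displayed inequality to $h=v(t)$ controls the supremum part, while applying it to the increments $h=v(t)-v(s)$ — using the linearity $\psi^{\nu+\theta/p}(v(t)-v(s))=\psi^{w}u(t)-\psi^{w}u(s)$ — controls the $C^{\alpha-1/p}$ seminorm in time; together these give
\begin{equation*}
|\psi^{w}u|_{C^{\alpha-1/p}([0,\tau];C^{\nu}(I))}\le N\,|v|_{C^{\alpha-1/p}([0,\tau];H^{\gamma'}_{p,\theta}(I))}.
\end{equation*}
Raising to the $p$-th power, taking expectations, and invoking the estimate from Theorem~\ref{embedding}(ii) yields \eqref{embedding_remark}. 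The argument is essentially routine once organized; the only points requiring care are the exact matching of the exponent $w$ in the second paragraph, and the observation that, because $\psi$ is independent of $t$, the interior Hölder embedding acts as a fixed bounded operator and therefore commutes with the time-Hölder seminorm. I would regard the former as the genuine (if mild) obstacle, since a sign or shift error in the weight exponents would break the identification $\psi^{\nu+\theta/p}v=\psi^{w}u$ on which everything rests.
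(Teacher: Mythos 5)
Your proposal is correct and follows essentially the same route as the paper: apply Theorem \ref{embedding}(ii) to get $\psi^{2\beta-1}u\in C^{\alpha-1/p}([0,\tau];H^{1/2-\kappa-2\beta}_{p,\theta}(I))$, then use Lemma \ref{prop_of_bessel_space}(ii) with $\gamma=1/2-\kappa-2\beta-\delta$ (after lowering the smoothness index) to convert the spatial Sobolev norm into the weighted Hölder norm, with the weight exponents matching exactly as you computed. Your write-up is merely more explicit than the paper's about verifying the hypotheses of Theorem \ref{embedding}(ii) and about pushing the fixed bounded map through the time-Hölder seminorm.
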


\begin{proof}
Applying Lemma \ref{prop_of_bessel_space} (ii) for $\psi^{2\beta-1}u$ and $\gamma = 1/2-\kappa-2\beta-\delta$,  we get 
\begin{equation*}
\begin{aligned}
|\psi^{-1/2-\kappa-1/p+\theta/p-\delta}& u|_{C^{1/2-\kappa-2\beta-1/p-\delta}(I)}\\
&\leq N\| \psi^{2\beta-1}u \|_{H_{p,\theta}^{1/2-\kappa-2\beta-\delta}(I)} \\
&\leq N\| \psi^{2\beta-1}u \|_{H_{p,\theta}^{1/2-\kappa-2\beta}(I)}.
\end{aligned}
\end{equation*}
Hence  this and \eqref{Bessel_embedding}  prove the corollary. 
\end{proof}

Finally we  introduce our assumption and main results.

\begin{assumption} \label{ass coeff}
(i) The coefficients $a(t,x)$, $b(t,x)$, $c(t,x)$ are $\mathcal{P}\times\mathcal{B}(I)$-measurable functions depending on $(\omega,t,x)$. \\
(ii) There exist    constants  $\delta_0, K >0 $ such that
$$
a(t,x)\geq \delta_0, \quad \forall \, \omega, t,x.
$$
and
\begin{equation*}
 | a(t,\cdot)|_{C^2(I)}+| b(t,\cdot)|_{C^2(I)}+|c(t,\cdot)|_{C^2(I)} <K, \quad \forall \, \omega,t.
\end{equation*} 
(iii)  $\xi(t,x)$ is a  bounded $\mathcal{P}\times\mathcal{B}(I)$-measurable function, that is, 
\begin{equation*} 
\| \xi(t,\cdot) \|_\infty\leq K,  \quad  \quad \forall\,\, \omega,t.
\end{equation*}
\end{assumption}

\begin{thm} \label{theorem_general_lambda}
Let  $0\leq \lambda <\frac{1}{2}$, Assumption \ref{ass coeff} hold, $u_0\geq0$, and $u_0\in U_{p,\theta}^{1/2-\kappa}(I)$ for some $\kappa\in (\lambda, 1/2)$. Suppose that $p$ and $\theta$ satisfy

\begin{equation} \label{condition_of_constant_1}
 0< \theta  \leq 1+p(\frac{1}{2}+\kappa), \quad \quad p>\max\left(\frac{6}{1-2\kappa},\frac{2\lambda\theta}{1-2\lambda}\right).
\end{equation}
   Then,   the equation
 \begin{equation} \label{main_equation_1}
du = (au_{xx}+bu_x+cu)  \,dt+\sum_{k=1}^{\infty} \xi |u|^{1+\lambda} \eta_k dw^k_t, \quad  0<t <\infty; \,\,u(0,\cdot) = u_0
\end{equation} 
has a unique solution (in the sense of distributions) $u \in \mathfrak{H}_{p,\theta,loc}^{1/2-\kappa}(I,\infty)$.
Furthermore, for any $\alpha, \beta$ satisfying  \eqref{condition_for_alpha_beta_2} and  $\delta \in [0, \frac{1}{2}-\kappa-2\beta-\frac{1}{p})$,  we have  \begin{equation} \label{embedding_in_thm}
|\psi^{-\frac{1}{2}-\kappa-\frac{1}{p}+\frac{\theta}{p}-\delta}u|_{C^{\alpha-\frac{1}{p}}([0,T] ; C^{\frac{1}{2}-\kappa-2\beta-\frac{1}{p}-\delta} (I))} < \infty,
 \end{equation}
for all $T<\infty$ (a.s.).
\end{thm}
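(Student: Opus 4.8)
The plan is to realize the solution of \eqref{main_equation_1} as a fixed point of the linear solution operator and then to read off the H\"older bound \eqref{embedding_in_thm} directly from Corollary \ref{embedding_corollary}. I would first invoke the linear $L_p$-theory in the weighted spaces of Definition \ref{def_of_sol_1}: under Assumption \ref{ass coeff} (in particular $a\ge\delta_0$ and the $C^2$-bounds, which make $L=a\partial_{xx}+b\partial_x+c$ uniformly elliptic with smooth coefficients) and for $\theta$ in the range \eqref{condition_of_constant_1}, the linear equation $du=(au_{xx}+bu_x+cu)\,dt+\sum_k g^k\,dw^k_t$ with $u_0\in U^{1/2-\kappa}_{p,\theta}(I)$ and $g\in\bH^{-1/2-\kappa}_{p,\theta}(I,\tau,l_2)$ has a unique $u\in\mathfrak{H}^{1/2-\kappa}_{p,\theta}(I,\tau)$ obeying $\|u\|_{\mathfrak{H}^{1/2-\kappa}_{p,\theta}(I,\tau)}\le N(\|g\|_{\bH^{-1/2-\kappa}_{p,\theta}(I,\tau,l_2)}+\|u_0\|_{U^{1/2-\kappa}_{p,\theta}(I)})$. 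Solving \eqref{main_equation_1} then amounts to finding a fixed point of $v\mapsto u$, where $u$ solves the linear equation with $g^k=\xi|v|^{1+\lambda}\eta_k$.

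The heart of the matter, and the step I expect to be the main obstacle, is a nonlinear estimate showing that $v\mapsto\xi|v|^{1+\lambda}\eta$ maps the solution space boundedly and locally Lipschitz-continuously into the noise space $\bH^{-1/2-\kappa}_{p,\theta}(I,\tau,l_2)$. Fixing time and writing $h=\xi|v|^{1+\lambda}$, I would exploit that $\{\eta_k\}$ is orthonormal in $L_2(I)$ together with the convolution representation \eqref{convolution}--\eqref{kernel}. Expanding $(1-\Delta)^{(-1/2-\kappa)/2}(h\eta_k)=c(\kappa)\int R_\kappa(\cdot-y)h(y)\eta_k(y)\,dy$ and summing the squares over $k$ collapses, by Parseval, the quantity $|(1-\Delta)^{(-1/2-\kappa)/2}(h\eta)|_{l_2}^2$ to the convolution $R_\kappa^2*(h^2\mathbf{1}_I)$. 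Young's inequality together with the sharp kernel integrability \eqref{eqn 12.14.1}, available precisely because one may take $r<(1-2\kappa)^{-1}$, then bounds the noise norm by $N\|h\|_{L_q(I)}\le N\|v\|_{L_{q(1+\lambda)}}^{1+\lambda}$ for a suitable $q$; the weighted Sobolev embeddings and interpolation of Lemma \ref{prop_of_bessel_space}(ii)--(vi) and Corollary \ref{interpol_cor} finally convert this into a bound by powers of $\|v\|_{H^{1/2-\kappa}_{p,\theta-p}(I)}$. Here $\kappa>\lambda$ is exactly what renders the kernel integrable enough to absorb the growth $|v|^{1+\lambda}$, while $p>6(1-2\kappa)^{-1}$ and $p>2\lambda\theta/(1-2\lambda)$ are dictated by the weighted embeddings and the boundary weight $\theta$; the bookkeeping of these exponents is where \eqref{condition_of_constant_1} originates. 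The difference version runs the same scheme on $\big||v|^{1+\lambda}-|w|^{1+\lambda}\big|\le N(|v|^\lambda+|w|^\lambda)|v-w|$, producing a local Lipschitz constant of order $(\|v\|+\|w\|)^\lambda$.

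With this estimate, local existence and uniqueness follow from a contraction argument on $\mathfrak{H}^{1/2-\kappa}_{p,\theta}(I,\tau)$: the extra time factor furnished by \eqref{eqn 11.27.3}, which bounds a lower-order norm of the solution by a time integral of its full $\mathfrak{H}$-norm, makes the fixed-point map a contraction for small stopping times inside a ball on which the Lipschitz constant is controlled. For global solvability in $\mathfrak{H}^{1/2-\kappa}_{p,\theta,loc}(I,\infty)$ I would truncate the nonlinearity $|v|^{1+\lambda}$ at level $M$ so that it becomes globally Lipschitz and the truncated equation is solvable on every $[0,T]$; defining $\tau_M$ as the exit time from a ball and combining the linear a priori bound with interpolation to absorb the top-order contribution of the nonlinear term (this is where $p>2\lambda\theta/(1-2\lambda)$ closes the estimate), one shows $\tau_M\uparrow\infty$, so the truncations coincide with a single solution. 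Nonnegativity of $u$, inherited from $u_0\ge0$, lets one replace $|u|^{1+\lambda}$ by $u^{1+\lambda}$, though the estimates above use only $|u|$ and do not require it.

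Finally, once $u\in\mathfrak{H}^{1/2-\kappa}_{p,\theta}(I,\tau_n)$ has been secured for a localizing sequence $\tau_n\uparrow\infty$, the regularity \eqref{embedding_in_thm} is immediate: for any $\alpha,\beta$ satisfying \eqref{condition_for_alpha_beta_2} and any admissible $\delta$, Corollary \ref{embedding_corollary} applied on each interval $[0,\tau_n]$ yields \eqref{embedding_remark}, and letting $n\to\infty$ gives the asserted bound on $[0,T]$ for every $T<\infty$, almost surely.
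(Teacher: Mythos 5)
Your outline of the Lipschitz/linear machinery (Theorem \ref{general_g_linear}, the kernel computation via \eqref{convolution}--\eqref{eqn 12.14.1}, and reading \eqref{embedding_in_thm} off Corollary \ref{embedding_corollary} at the end) matches the paper, but there is a genuine gap at the globalization step, and it is precisely the step the paper spends most of its effort on. You assert that after truncating the nonlinearity at level $M$ one shows $\tau_M\uparrow\infty$ by ``combining the linear a priori bound with interpolation to absorb the top-order contribution of the nonlinear term.'' For a superlinear noise coefficient $|u|^{1+\lambda}$ with $\lambda>0$ no such soft argument is available: the Lipschitz constant of the truncated nonlinearity grows like $M^{\lambda}$, so the a priori bound from Theorem \ref{general_for_g} degenerates as $M\to\infty$, and interpolation between Sobolev norms cannot convert a bound that blows up in $M$ into a uniform one. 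The paper's actual mechanism (Lemma \ref{key_lemma}) is a uniform-in-$m$ tightness estimate for $\sup_{t,x}|\psi^{-1/2-\kappa-1/p+\theta/p}u_m|$, obtained by first proving $\mathbb{E}\sup_{t\le T}\|\psi^{\alpha}u_m(t)\|_{L_1(I)}^{1/2}\le N$ uniformly in $m$ via testing against $\psi^{\alpha}\phi_k$ with the special concave weight of Lemma \ref{weight_lemma} satisfying $a\psi''+(2a_x-b)\psi'\le 0$, and then rewriting the equation with $\xi_m=\xi|u_m\wedge m|^{1+\lambda}/u_m$ and applying Theorem \ref{theorem_lambda_0} with $s=1/(2\lambda)$ up to the stopping time where the $L_1$ norm exceeds $S$. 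This is where $p>2\lambda\theta/(1-2\lambda)$ actually enters (it guarantees $\theta/p<-1+1/(2\lambda)$ so that $\theta'=\alpha+1<s$ in Assumption \ref{assumption_for_h}), not through an interpolation of the top-order term.

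Your remark that ``the estimates above use only $|u|$ and do not require'' nonnegativity is the clearest sign that the difficulty has been misplaced: the $L_1$ a priori bound above is a bound on $\int\psi^{\alpha}u_m\,dx$, and discarding the drift contributions by sign requires $u_m\ge 0$, which the paper must first establish via the maximum principle (Theorem \ref{thm maximum}), itself proved by a nontrivial approximation with finitely many noises and smoothed initial data. Without nonnegativity there is no known substitute for this estimate and blow-up cannot be ruled out. Your local-in-time contraction and the ``local Lipschitz constant of order $(\|v\|+\|w\|)^{\lambda}$'' suffer from a related problem: the pointwise inequality $\bigl||v|^{1+\lambda}-|w|^{1+\lambda}\bigr|\le N(|v|^{\lambda}+|w|^{\lambda})|v-w|$ gives a constant controlled by $\|v\|_{\infty}+\|w\|_{\infty}$, which is not dominated pathwise by the $\mathfrak{H}^{1/2-\kappa}_{p,\theta}$-norm; one is forced back to stopping times at sup-norm levels, i.e.\ to the truncation scheme, and then back to the missing uniform estimate. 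The final regularity step via Corollary \ref{embedding_corollary} is correct as you state it.
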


\begin{remark}
Note that $u_0$ is a $H^{1/2-\kappa-2/p}_{p,\theta+2-p}(I)$-valued random variable, and it is possible $1/2-\kappa-2/p< 0$. In this case, 
by $u_0\geq 0$ we mean $(u_0, \phi)\geq 0$ for any non-negative $\phi\in C^{\infty}_c(I)$.

\end{remark}  

\begin{remark}
 Due to the condition $p>6(1-2\kappa)^{-1}$ in \eqref{condition_of_constant_1}, one can always choose $\alpha, \beta$ such that  \eqref{condition_for_alpha_beta_2}  holds and the space H\"older exponent  $1/2-\kappa-2\beta-1/p$ in \eqref{embedding_in_thm} is positive.

According to the choice of $\alpha, \beta, \kappa$ one can obtain various  H\"older regularity. For instance, taking $\alpha, \beta$ sufficiently close to $1/p$ and $\delta=0$, from \eqref{embedding_in_thm} we get  the following space H\"older regularity: for any small $\varepsilon>0$,
\begin{equation}
      \label{11.26.1}
\sup_{t\leq T} |\psi^{-1/2-\kappa-1/p+\theta/p}u(t,\cdot)|_{C^{1/2-\kappa-3/p-\varepsilon}(I)}<\infty.
\end{equation}
Also choosing $\alpha, \beta$ sufficiently close to $1/4-\kappa/2-1/(2p)$ and $\delta=0$,  we get the following  time H\"older regularity: for any small $\varepsilon>0$,
\begin{equation}
     \label{11.26.2}
\sup_{x\in I} |\psi^{-1/2-\kappa-1/p+\theta/p}u(\cdot,x)|_{C^{1/4-\kappa/2-3/(2p)-\varepsilon}([0,T])}<\infty,
\end{equation}
for all $T<\infty$ (a.s.).

\end{remark}

  To take $p$ to $\infty$ in \eqref{embedding_in_thm} we need the following result.
  
\begin{thm}
    \label{thm unique}
   Let $u$ be taken from Theorem \ref{theorem_general_lambda}. Let $\theta', p'$ be constants such that \eqref{condition_of_constant_1} also holds
    with $(p',\theta')$,
    $(p',\theta)$, and  $(p,\theta')$. Assume $u_0\in U^{1/2-\kappa}_{p',\theta'}(I)$ so that 
   equation  \eqref{main_equation_1}   has the unique solution $\bar{u}\in \mathfrak{H}^{1/2-\kappa}_{p',\theta',loc}(I, \infty)$.   
    Then,  we have $u=\bar{u}$ in $\mathfrak{H}^{1/2-\kappa}_{p',\theta',loc}(I, \infty)$.    
    \end{thm}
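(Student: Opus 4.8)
The plan is to exhibit one admissible weighted space that contains both $u$ and $\bar u$ as solutions and in which the uniqueness part of Theorem~\ref{theorem_general_lambda} already applies; the nonlinear term $\xi|u|^{1+\lambda}$ then requires no new estimate of its own, since all of its analysis is absorbed into that uniqueness statement.

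First I would set $\hat p:=p\wedge p'$ and $\hat\theta:=\theta\vee\theta'$. The key observation is that the couple $(\hat p,\hat\theta)$ coincides with one of the four pairs $(p,\theta)$, $(p',\theta')$, $(p',\theta)$, $(p,\theta')$, each of which is assumed to satisfy \eqref{condition_of_constant_1}; consequently $(\hat p,\hat\theta)$ satisfies \eqref{condition_of_constant_1} as well. This is precisely why the hypothesis demands admissibility of all four parameter combinations.

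Next I would verify that the common space contains the data and both solutions. For the solutions, the embedding \eqref{eqn 12.11.5} gives $\mathfrak{H}^{1/2-\kappa}_{p,\theta,loc}\subset\mathfrak{H}^{1/2-\kappa}_{\hat p,\hat\theta,loc}$ (since $p\ge\hat p$ and $\theta\le\hat\theta$) and likewise $\mathfrak{H}^{1/2-\kappa}_{p',\theta',loc}\subset\mathfrak{H}^{1/2-\kappa}_{\hat p,\hat\theta,loc}$; as this inclusion is at the level of the $\mathfrak{H}$-spaces, the It\^o decomposition of $u$ and of $\bar u$, with free terms $au_{xx}+bu_x+cu$ and $\xi|u|^{1+\lambda}\eta_k$, is preserved, so both are solutions of \eqref{main_equation_1} with initial value $u_0$ in $\mathfrak{H}^{1/2-\kappa}_{\hat p,\hat\theta,loc}(I,\infty)$. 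For the data, if $p\le p'$ then $\hat p=p$ and I start from $u_0\in U^{1/2-\kappa}_{p,\theta}(I)$, whereas if $p>p'$ then $\hat p=p'$ and I start from $u_0\in U^{1/2-\kappa}_{p',\theta'}(I)$; in both cases the matching summability index already equals $\hat p$ and only the weight index must be raised to $\hat\theta$, so $u_0\in U^{1/2-\kappa}_{\hat p,\hat\theta}(I)$ follows from the monotonicity \eqref{eqn 11.5.2} of $H^{1/2-\kappa-2/\hat p}_{\hat p,\cdot}(I)$ in its weight parameter. With both solutions in the single admissible space $\mathfrak{H}^{1/2-\kappa}_{\hat p,\hat\theta,loc}(I,\infty)$ and sharing the data $u_0\in U^{1/2-\kappa}_{\hat p,\hat\theta}(I)$, the uniqueness assertion of Theorem~\ref{theorem_general_lambda} for $(\hat p,\hat\theta)$ yields $u=\bar u$ there. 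To return to the target space I would use that $\bar u\in\mathfrak{H}^{1/2-\kappa}_{p',\theta',loc}(I,\infty)$: taking a common localizing sequence $\sigma_n\uparrow\infty$ of bounded stopping times (the pointwise minimum of one sequence realizing $u=\bar u$ in the $(\hat p,\hat\theta)$-norm and one realizing finiteness of $\bar u$ in the $(p',\theta')$-norm) and restricting to each $\opar 0,\sigma_n\cbrk$, we obtain $u=\bar u$ in $\mathfrak{H}^{1/2-\kappa}_{p',\theta'}(I,\sigma_n)$, which is the required equality in $\mathfrak{H}^{1/2-\kappa}_{p',\theta',loc}(I,\infty)$ in the sense of Definition~\ref{def_of_sol_1}.

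I do not anticipate a genuine analytic difficulty: once the common space is identified the argument is bookkeeping with the two monotonicity/embedding statements \eqref{eqn 12.11.5} and \eqref{eqn 11.5.2}. The only step requiring care is the inclusion $u_0\in U^{1/2-\kappa}_{\hat p,\hat\theta}(I)$, because the definition of $U^{\gamma}_{q,\vartheta}(I)$ shifts the weight to $\vartheta+2-q$; one must ensure that, relative to the chosen starting couple, only an increase of the weight index is needed, which is exactly what the choice $\hat\theta=\theta\vee\theta'$ together with $\hat p=p\wedge p'$ guarantees.
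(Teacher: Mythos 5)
Your proposal is correct and follows essentially the same route as the paper: the authors also pass to the common space with summability index $p\wedge p'$ (they assume WLOG $p'\ge p$) and weight $\theta\vee\theta'$ via Remark \ref{remark 12.11.1}, note that $u_0$ lies in the corresponding $U$-space, and invoke the uniqueness of Theorem \ref{theorem_general_lambda} there. Your extra care in checking that $(\hat p,\hat\theta)$ is one of the four admissible pairs and in transferring the equality back to $\mathfrak{H}^{1/2-\kappa}_{p',\theta',loc}(I,\infty)$ only makes explicit what the paper leaves implicit.
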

    
    \begin{proof}
    By definition, there exists a sequence of bounded stopping times $\tau_n \uparrow \infty$ such that 
    $$
    u\in \mathfrak{H}^{1/2-\kappa}_{p,\theta}(I, \tau_n), \quad \bar{u}\in \mathfrak{H}^{1/2-\kappa}_{p',\theta'}(I, \tau_n).
    $$
    Without loss of generality assume $p' \geq p$. Put $\theta_0=\theta \vee \theta'$. Then by Remark \ref {remark 12.11.1},  
    $$
    u, \, \bar{u}\in     \mathfrak{H}^{1/2-\kappa}_{p, \theta_0}(I, \tau_n), \quad (n=1,2,\cdots),
    $$
    and therefore
    $$ u, \, \bar{u}\in     \mathfrak{H}^{1/2-\kappa}_{p, \theta_0,loc}(I, \infty).
    $$
    Note $U^{1/2-\kappa}_{p, \theta}(I) \subset U^{1/2-\kappa}_{p, \theta_0}(I)$. Thus, by Theorem \ref{theorem_general_lambda}, there exists a unique solution $\tilde{u} \in   \mathfrak{H}^{1/2-\kappa}_{p, \theta_0,loc}(I, \infty)$ to equation \eqref{main_equation_1}. Therefore, by the uniqueness, we conclude 
    $u=\bar{u}=\tilde{u}$.
    The theorem is proved.
        \end{proof}

      \begin{corollary}
    Suppose $u_0\in  U^{1/2-\lambda}_{p,\theta}(I)$ for any $p>2$ and $\theta>0$. Then for any  $T\in (0,\infty)$ and sufficiently small $\varepsilon>0$,     (a.s.)
    \begin{equation}
         \label{11.26.3}
    \psi^{-1/2-\kappa}u \in C^{\frac{1}{4}-\frac{\kappa}{2}-\varepsilon, \frac{1}{2}-\kappa-\varepsilon}_{t,x}([0,T]\times I), \quad \forall\, \kappa\in (\lambda, 1/2).
    \end{equation}
 In particular, for any $T>$ and $\varepsilon>0$, 
     $$
     \psi^{-1/2-\lambda}u \in C^{\frac{1}{4}-\frac{\lambda}{2}-\varepsilon, \frac{1}{2}-\lambda-\varepsilon}_{t,x}([0,T]\times I), \quad (a.s.),
     $$
     and 
     $$
     \sup_{t\leq T} |u(t,x)| \leq N(\omega,\varepsilon) \rho(x)^{1-\varepsilon}, \quad (a.s.).
     $$
    \begin{proof}
    Note $U^{1/2-\lambda}_{p,\theta}(I)\subset U^{1/2-\kappa}_{p,\theta}(I) $ for any $\kappa >\lambda$. Take $\theta=1$.  By Theorem \ref{theorem_general_lambda} for each sufficiently large $p$ there is a unique solution in   $\mathfrak{H}_{p,1,loc}^{1/2-\kappa}(I,\tau)$, and by Theorem \ref{thm unique}  all the solutions coincide for all large $p$. Thus, we get \eqref{11.26.3} by taking  $\delta=0$ and $p\to \infty$  in \eqref{11.26.1} and \eqref{11.26.2}. The last assertion is obtained by taking $\kappa$ sufficiently close to $1/2$ in \eqref{11.26.3}.  For the second assertion it is enough to take $\kappa$ sufficiently close to $\lambda$ and use the  fact that  for any $\nu>0$, $|\psi^{-1/2-\lambda}u|_{C^{\nu}}\leq N  |\psi^{-1/2-\kappa}u|_{C^{\nu}} $. The corollary is proved.  
     \end{proof}

\end{corollary}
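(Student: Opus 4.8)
The plan is to build the corollary entirely out of three results already in hand: the solvability-and-regularity Theorem \ref{theorem_general_lambda}, the cross-parameter identification Theorem \ref{thm unique}, and the explicit space/time H\"older bounds \eqref{11.26.1} and \eqref{11.26.2}. First I would fix $\kappa\in(\lambda,1/2)$ and set $\theta=1$. Because the differentiability index obeys $1/2-\lambda>1/2-\kappa$, the hypothesis $u_0\in U^{1/2-\lambda}_{p,1}(I)$ embeds into $U^{1/2-\kappa}_{p,1}(I)$ for every $p$; indeed $U^{\gamma}_{p,\theta}(I)=L_p(\Omega;H^{\gamma-2/p}_{p,\theta+2-p}(I))$ and a larger smoothness index gives a smaller space. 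Hence Theorem \ref{theorem_general_lambda} applies for each sufficiently large $p$ (large enough that \eqref{condition_of_constant_1} holds with this $\kappa$ and $\theta=1$), producing a unique $u_p\in\mathfrak{H}^{1/2-\kappa}_{p,1,loc}(I,\infty)$ together with the estimate \eqref{embedding_in_thm}. With $\theta=\theta'=1$ the side conditions of Theorem \ref{thm unique} reduce to the single-parameter bound already verified for all large $p$, so all the $u_p$ coincide; I denote their common value by $u$.

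Next I would let $p\to\infty$. With $\theta=1$ the weight exponent $-1/2-\kappa-1/p+\theta/p$ collapses to the $p$-independent value $-1/2-\kappa$. Feeding $\theta=1$ and $\delta=0$ into \eqref{11.26.1} yields, for every small $\varepsilon>0$, the spatial bound $\sup_{t\leq T}|\psi^{-1/2-\kappa}u|_{C^{1/2-\kappa-3/p-\varepsilon}(I)}<\infty$, and driving $p\to\infty$ raises the spatial H\"older exponent to $1/2-\kappa-\varepsilon$. Symmetrically, \eqref{11.26.2} gives the temporal bound with exponent $1/4-\kappa/2-3/(2p)-\varepsilon$, which tends to $1/4-\kappa/2-\varepsilon$. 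Combining the two produces the joint regularity \eqref{11.26.3}, namely $\psi^{-1/2-\kappa}u\in C^{\frac14-\frac\kappa2-\varepsilon,\,\frac12-\kappa-\varepsilon}_{t,x}([0,T]\times I)$.

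Finally I would extract the two specializations. For the statement at exponent $1/2-\lambda$, I would factor $\psi^{-1/2-\lambda}=\psi^{\kappa-\lambda}\,\psi^{-1/2-\kappa}$ and use that multiplication by the smooth bounded factor $\psi^{\kappa-\lambda}$ (here $\kappa>\lambda$) preserves H\"older seminorms, so $|\psi^{-1/2-\lambda}u|_{C^{\nu}}\leq N|\psi^{-1/2-\kappa}u|_{C^{\nu}}$; letting $\kappa\downarrow\lambda$ in \eqref{11.26.3} then yields the exponents $1/4-\lambda/2-\varepsilon$ and $1/2-\lambda-\varepsilon$. For the boundary decay, I would observe that membership in the H\"older space in \eqref{11.26.3} forces $\psi^{-1/2-\kappa}u$ to be bounded on $[0,T]\times I$, so $\sup_{t\leq T}|u(t,x)|\leq N\psi^{1/2+\kappa}(x)$; choosing $\kappa$ close enough to $1/2$ (depending on $\varepsilon$, and keeping the spatial exponent positive), and invoking the comparability $c^{-1}\psi\leq\rho\leq c\psi$ from \eqref{eqn psi} together with $\rho\leq 1/2<1$, gives $\sup_{t\leq T}|u(t,x)|\leq N\rho^{1-\varepsilon}(x)$.

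I do not expect a genuinely hard step here, since every ingredient is supplied by an earlier result; the one point demanding care is confirming that \eqref{condition_of_constant_1} can be satisfied simultaneously for a whole family of large $p$ with $\kappa$ and $\theta=1$ fixed, so that Theorem \ref{thm unique} really identifies all the $u_p$ as a single $\kappa$-indexed solution. Once that is checked, both the limit $p\to\infty$ and the two specializations are routine.
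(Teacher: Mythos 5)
Your proof follows essentially the same route as the paper's: embed $U^{1/2-\lambda}_{p,1}$ into $U^{1/2-\kappa}_{p,1}$, identify the solutions across large $p$ via Theorem \ref{thm unique}, send $p\to\infty$ in \eqref{11.26.1}--\eqref{11.26.2} with $\theta=1$ and $\delta=0$ to get \eqref{11.26.3}, and then specialize by taking $\kappa$ near $\lambda$ (for the H\"older statement) and near $1/2$ (for the boundary decay). The only caveat is that justifying $|\psi^{-1/2-\lambda}u|_{C^{\nu}}\leq N|\psi^{-1/2-\kappa}u|_{C^{\nu}}$ by calling $\psi^{\kappa-\lambda}$ ``smooth and bounded'' is loose --- $\psi^{\kappa-\lambda}$ is only $C^{\kappa-\lambda}$ up to the boundary, so the product estimate is not immediate for $\nu>\kappa-\lambda$ --- but this is precisely the fact the paper itself asserts without further argument, so your attempt matches the paper's proof in both structure and level of detail.
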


\section{Auxiliary results}

In this section we introduce some auxiliary results on the  equation
\begin{equation} \label{equation_g}
du(t,x) = (au_{xx}+bu_x+cu )dt
+ \sum_{k=1}^{\infty} g^k(u)dw_t^k,\quad t>0 \quad ;\, u(0,\cdot) = u_0,
\end{equation}
where $g^k(u)=g^k(\omega, t,x,u)$.

\vspace{2mm}

Below, we recall Assumption \ref{ass coeff}(i)-(ii) for reader's convenience:
\begin{assumption} \label{ass coeff-3}
(i)  The coefficients $a(t,x)$, $b(t,x)$, $c(t,x)$ are $\mathcal{P}\times\mathcal{B}(I)$-measurable functions depending on $(\omega,t,x)$.

(ii) There exists    constants  $\delta_0, K >0 $ such that
$$
a(t,x)\geq \delta_0, \quad \forall \, \omega, t,x.
$$
and
\begin{equation*}
 | a(t,\cdot)|_{C^2(I)}+| b(t,\cdot)|_{C^2(I)}+|c(t,\cdot)|_{C^2(I)} <K, \quad \forall \, \omega,t.
\end{equation*}
\end{assumption}

The following is a special result of \cite[Theorem 2.14]{KK2004}.

\begin{thm}[linear case] \label{general_g_linear}
Let $\tau \leq T$, $|\gamma|\leq 2$, $p\geq2$, $\theta\in (0,p)$,    and $g(u)=g$ be independent of $u$.  Suppose Assumption \ref{ass coeff-3} holds,  $g\in \mathbb{H}_{p,\theta}^{\gamma+1}(I,\tau,l_2)$ and $u_0\in U_{p,\theta}^{\gamma+2}(I)$. Then  equation \eqref{equation_g} admits a unique solution $u$ in $\mathfrak{H}_{p,\theta}^{\gamma+2}(I,\tau)$, and for this solution 
\begin{equation*}
\| u \|_{\mathfrak{H}_{p,\theta}^{\gamma+2}(I,\tau)} \leq N( \| g \|_{\mathbb{H}_{p,\theta}^{\gamma+1}(I,\tau,l_2)} + \| u_0 \|_{U_{p,\theta}^{\gamma+2}(I)}),
\end{equation*}
where  $N=N(p,\theta,\gamma,K,\delta_0, T)$.
\end{thm}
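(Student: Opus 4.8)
The plan is to exploit that, since $g$ is independent of $u$, equation \eqref{equation_g} is genuinely linear, and to obtain both existence and uniqueness from a single a priori estimate combined with the method of continuity. Writing $L=aD_x^2+bD_x+c$, I would connect $L$ to the model operator $\delta_0 D_x^2$ through the family $L_s:=(1-s)\delta_0 D_x^2+sL$, $s\in[0,1]$, each $L_s$ satisfying Assumption \ref{ass coeff-3} with the same constants $\delta_0,K$. The two inputs needed are then: (a) solvability with an estimate for the model equation $du=\delta_0 u_{xx}\,dt+\sum_k g^k dw_t^k$ in $\mathfrak{H}^{\gamma+2}_{p,\theta}(I,\tau)$, and (b) the uniform a priori bound $\|u\|_{\mathfrak{H}^{\gamma+2}_{p,\theta}(I,\tau)}\le N(\|g\|_{\mathbb{H}^{\gamma+1}_{p,\theta}(I,\tau,l_2)}+\|u_0\|_{U^{\gamma+2}_{p,\theta}(I)})$ valid for every $L_s$ with $N$ independent of $s$. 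Granting these, the set of $s$ for which $L_s$ is solvable is nonempty (it contains $s=0$), open (perturb $s$ and absorb the resulting $O(|s-s'|)$ term using (b) together with the boundedness of $D_x,D_x^2$ from Lemma \ref{prop_of_bessel_space} (iv)), and closed (a Cauchy sequence of solutions converges by completeness, Theorem \ref{embedding} (i)); hence $s=1$ is solvable.

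For the model estimate (a) I would invoke Krylov's half-space $L_p$-theory for SPDEs \cite{kry1999analytic,kry2001} together with Lototsky's extension of the spaces $H^{\gamma}_{p,\theta}$ to the interval \cite{lototsky2000sobolev}; this is exactly the situation covered by \cite[Theorem 2.14]{KK2004}, so in practice (a) is quoted rather than reproved. The substance is the a priori estimate (b) for variable coefficients. Here I would localize using the very partition built into the norm \eqref{def}: multiply the equation by $\zeta_n$, rescale $x\mapsto e^{-n}x$ to pass to the unit scale where $\psi\sim 1$, and freeze the leading coefficient $a$ at a point of the support of $\zeta_n$. On each rescaled piece the equation is uniformly nondegenerate with coefficients bounded by $K$, so the whole-space constant-coefficient estimate applies; the error from freezing $a$ and the lower-order terms $bD_x$ and $c$ are controlled by the pointwise multiplier bound Lemma \ref{prop_of_bessel_space} (v) (using $a,b,c\in C^2$) and the interpolation inequality Corollary \ref{interpol_cor} (i), the latter supplying a small constant in front of the top-order error so it can be absorbed into the left side. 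Summing over $n$ against the weights $e^{n\theta}$ reassembles the $\mathfrak{H}^{\gamma+2}_{p,\theta}$-norm, the correct weight shifts by $p$ between the spaces of $u$, $\bD u$ and $\bS u$ being exactly those recorded in Remark \ref{remark 12.4} and Lemma \ref{prop_of_bessel_space} (iv).

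Uniqueness is then immediate from (b): if $u_1,u_2$ are two solutions with the same data, then $u_1-u_2$ solves the equation with $g=0$ and $u_0=0$, so (b) forces $\|u_1-u_2\|_{\mathfrak{H}^{\gamma+2}_{p,\theta}(I,\tau)}=0$.

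The step I expect to be the main obstacle is the a priori estimate (b), specifically the interaction between the degenerating boundary weight and the top-order coefficient $a(t,x)$. After the rescaling $x\mapsto e^{-n}x$ the diffusion term $\delta_0 u_{xx}$ is scale-invariant in the right bookkeeping, but the first-order term $bu_x$ and zero-order term $cu$ pick up negative powers of $e^{n}$, so one must verify that these are genuinely lower order \emph{uniformly} in $n$; this is where the constraint $\theta\in(0,p)$ enters, guaranteeing that the boundary contributions stay $p$-summable against the weights $e^{n\theta}$ and that the error terms can be absorbed rather than merely bounded. Keeping the constant $N$ in (b) independent of the continuity parameter $s$ — which is what the openness step requires — forces all of these estimates to depend on $a,b,c$ only through $\delta_0$ and $K$, a dependence that the multiplier and interpolation bounds above do provide.
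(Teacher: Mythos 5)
The paper gives no proof of Theorem \ref{general_g_linear} at all: it is stated as a special case of \cite[Theorem 2.14]{KK2004} and simply quoted. Your proposal is therefore doing something genuinely different from the paper --- it reconstructs the proof of the cited result --- and the reconstruction follows the correct, standard Krylov-school scheme: method of continuity connecting $L$ to $\delta_0 D_x^2$, a model-case solvability statement, and a variable-coefficient a priori estimate obtained by localizing with the functions $\zeta_{-n}$ built into the norm \eqref{def}, rescaling, freezing the leading coefficient, and absorbing the errors via the multiplier bound of Lemma \ref{prop_of_bessel_space}(v) and the interpolation inequality of Corollary \ref{interpol_cor}(i); uniqueness from the estimate is immediate as you say. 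This is essentially how \cite{KK2004} (building on \cite{kry2001,kr99-1,kry1999weighted}) actually proceeds, so the outline is sound. Three caveats if you mean it to be self-contained. First, your step (a) is itself quoted from \cite{KK2004}, which makes the argument partly circular relative to the theorem being proved; the honest base case is Krylov's constant-coefficient half-line result, and there the restriction $\theta\in(0,p)$ (that is, $d-1<\theta<d-1+p$ with $d=1$) enters as the exact solvability range of the model problem, not merely as a summability condition on error terms as your last paragraph suggests. Second, the spatial rescaling $x\mapsto e^{-n}x$ must be paired with the parabolic rescaling in time (or, equivalently, with the dilation invariance of the weighted norms); without that the frozen-coefficient equation on the unit scale is not the standard heat equation, and the bookkeeping by which $bu_x$ and $cu$ become lower order uniformly in $n$ is precisely the weight shift recorded in Lemma \ref{prop_of_bessel_space}(iii)--(iv) and Remark \ref{remark 12.4} rather than a consequence of $\theta\in(0,p)$ alone. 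Third, in the openness step one needs the constant in (b) to be uniform over $s\in[0,1]$, which holds because each $L_s$ has ellipticity constant at least $\delta_0$ and $C^2$-norms controlled by $K$ --- worth stating explicitly. None of these is a fatal gap; the proposal is a correct sketch of the proof that the paper chose to import by citation.
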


We remark that the above result is true for other $\gamma$ if appropriate smoothness of the coefficients is assumed.

To generalize Theorem \ref{general_g_linear} to the nonlinear case we assume the following.

\begin{assumption}[$\tau$] \label{assumption_for_g}
 (i) For any  $u\in H_{p,\theta-p}^{\gamma+2}(I)$, the function $g(t,x,u)$  is an $H_{p,\theta}^{\gamma+1}(I,l_2)$-valued  predictable function, and $g(0)\in \bH^{\gamma+1}_{p,\theta}(\tau,I,l_2)$, where $\tau$ is a stopping time. 
 
(ii)  For any $\varepsilon>0$, there exists a constant $N_{\varepsilon}$ such that
\begin{equation} \label{eq_assumption_for_g}
\| g(t,u)-g(t,v) \|_{H^{\gamma+1}_{p,\theta}(I,l_2)}  \leq \varepsilon \| u-v \|_{H_{p,\theta-p}^{\gamma+2}(I)} + N_{\varepsilon}\|u-v\|_{H^{\gamma+1}_{p,\theta}(I)}
\end{equation}
 for any $u,v\in H_{p,\theta-p}^{\gamma+2}(I)$, $\omega\in \Omega$ and $t\in [0, \tau]$.

\end{assumption}

We emphasize that  we require \eqref{eq_assumption_for_g}  only for $t\leq \tau$, not for all $t$.

\begin{thm}[nonlinear case] \label{general_for_g}
 Let $|\gamma|\leq 2$, $p\geq2$, $\tau\leq T$, and  $\theta\in (0,p)$. Suppose Assumption \ref{ass coeff-3} and Assumption \ref{assumption_for_g}($\tau$) hold.  Then, for any $u_0\in U_{p,\theta}^{\gamma+2}(I)$,   equation \eqref{equation_g}  has a unique solution $u$ in $\mathfrak{H}_{p,\theta}^{\gamma+2}(I,\tau)$, and  for this solution 
\begin{equation} \label{estimate_g}
\| u \|_{\mathfrak{H}_{p,\theta}^{\gamma+2}(I,\tau)} \leq N( \| g(0) \|_{\mathbb{H}_{p,\theta}^{\gamma+1}(I,\tau,l_2)} + \| u_0 \|_{U_{p,\theta}^{\gamma+2}(I)}),
\end{equation} 
where  $N=N(p,\theta, \gamma,\delta_0, K,T)$.

\end{thm}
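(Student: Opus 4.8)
The plan is to obtain the solution as a fixed point of the map $\mathcal R$ sending $v\in\mathfrak H_{p,\theta}^{\gamma+2}(I,\tau)$ to the solution $u=\mathcal Rv$ of the \emph{linear} problem
\begin{equation*}
du=(au_{xx}+bu_x+cu)\,dt+\sum_{k=1}^{\infty}g^k(v)\,dw_t^k,\qquad u(0,\cdot)=u_0 .
\end{equation*}
First I would check that $\mathcal R$ is well defined: by Assumption \ref{assumption_for_g}$(\tau)$(i) together with \eqref{eq_assumption_for_g} (applied with $v$ and $0$), one has $g(v)\in\mathbb H_{p,\theta}^{\gamma+1}(I,\tau,l_2)$ whenever $v\in\mathfrak H_{p,\theta}^{\gamma+2}(I,\tau)$, so Theorem \ref{general_g_linear} furnishes a unique $\mathcal Rv\in\mathfrak H_{p,\theta}^{\gamma+2}(I,\tau)$. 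For two inputs the difference $w:=\mathcal Rv_1-\mathcal Rv_2$ solves the same linear equation with source $g(v_1)-g(v_2)$ and zero initial data, so Theorem \ref{general_g_linear} gives, for every $t\le\tau$,
\begin{equation*}
\|w\|_{\mathfrak H_{p,\theta}^{\gamma+2}(I,t)}\le N\,\|g(v_1)-g(v_2)\|_{\mathbb H_{p,\theta}^{\gamma+1}(I,t,l_2)} .
\end{equation*}

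The heart of the matter is to turn the $\varepsilon$--$N_\varepsilon$ bound \eqref{eq_assumption_for_g} into a genuine contraction. Raising \eqref{eq_assumption_for_g} to the $p$-th power, integrating in $(\omega,t)$, and using that $\|\cdot\|_{\mathbb H_{p,\theta-p}^{\gamma+2}(I,t)}$ is one of the summands of $\|\cdot\|_{\mathfrak H_{p,\theta}^{\gamma+2}(I,t)}$, I would arrive at
\begin{equation*}
\|w\|_{\mathfrak H_{p,\theta}^{\gamma+2}(I,t)}^p\le N\varepsilon^p\,\|v_1-v_2\|_{\mathfrak H_{p,\theta}^{\gamma+2}(I,t)}^p+N N_\varepsilon^p\,\|v_1-v_2\|_{\mathbb H_{p,\theta}^{\gamma+1}(I,t)}^p .
\end{equation*}
Now I would \emph{fix} $\varepsilon$ so small that $N\varepsilon^p\le\tfrac12$, absorbing the top-order term (which no lower-order norm can control), and then invoke \eqref{eqn 11.27.3} — or, for a genuine stopping time, its immediate analogue for the running time $\tau\wedge t$, proved exactly as in Theorem \ref{embedding}(iv) via \eqref{Bessel_embedding} — to replace the remaining term by a time integral, obtaining the contraction estimate
\begin{equation*}
\|w\|_{\mathfrak H_{p,\theta}^{\gamma+2}(I,t)}^p\le\tfrac12\,\|v_1-v_2\|_{\mathfrak H_{p,\theta}^{\gamma+2}(I,t)}^p+C\int_0^t\|v_1-v_2\|_{\mathfrak H_{p,\theta}^{\gamma+2}(I,s)}^p\,ds .
\end{equation*}

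All three assertions then follow from this estimate. For \textbf{existence} I would iterate $u^{(0)}=0$, $u^{(n+1)}=\mathcal Ru^{(n)}$ and set $a_n(t)=\|u^{(n+1)}-u^{(n)}\|_{\mathfrak H_{p,\theta}^{\gamma+2}(I,t)}^p$; the estimate reads $a_n(t)\le\frac12 a_{n-1}(t)+C\int_0^t a_{n-1}(s)\,ds$, which unwinds by induction (Pascal's rule) to $a_n(t)\le a_0(\tau)\sum_{k=0}^n\binom{n}{k}2^{-(n-k)}(Ct)^k/k!$. For fixed $T$ this is $2^{-n}$ times a subexponential factor $e^{c\sqrt n}$, so $\sum_n a_n(\tau)^{1/p}<\infty$ and $u^{(n)}$ is Cauchy in $\mathfrak H_{p,\theta}^{\gamma+2}(I,\tau)$; its limit $u$ solves the equation because $g(u^{(n)})\to g(u)$ in $\mathbb H_{p,\theta}^{\gamma+1}(I,\tau,l_2)$ by \eqref{eq_assumption_for_g}, permitting passage to the limit in \eqref{def_of_sol_2}. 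For \textbf{uniqueness}, applying the estimate to the difference $w$ of two solutions and absorbing gives $\|w\|_{\mathfrak H_{p,\theta}^{\gamma+2}(I,t)}^p\le 2C\int_0^t\|w\|_{\mathfrak H_{p,\theta}^{\gamma+2}(I,s)}^p\,ds$, whence Gronwall forces $w=0$. For the \textbf{a priori bound} \eqref{estimate_g} I would compare $u$ with $0$: Theorem \ref{general_g_linear}, \eqref{eq_assumption_for_g} with $v=0$, the same absorption of $N\varepsilon^p$, \eqref{eqn 11.27.3}, and Gronwall yield $\|u\|_{\mathfrak H_{p,\theta}^{\gamma+2}(I,\tau)}\le N(\|g(0)\|_{\mathbb H_{p,\theta}^{\gamma+1}(I,\tau,l_2)}+\|u_0\|_{U_{p,\theta}^{\gamma+2}(I)})$.

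The main obstacle is precisely the top-order term in \eqref{eq_assumption_for_g}: since it multiplies the strongest norm $\|u-v\|_{H_{p,\theta-p}^{\gamma+2}(I)}$, no shrinking of the time interval produces a contraction, and one is forced to spend the smallness of $\varepsilon$ to absorb it through the linear estimate of Theorem \ref{general_g_linear}, while verifying that the constant $N$ there is independent of $\varepsilon$ so that the subsequent Gronwall step is legitimate. The rest is bookkeeping: checking at each iterate that $u^{(n)}\in\mathfrak H_{p,\theta}^{\gamma+2}(I,\tau)$, so that $g(u^{(n)})\in\mathbb H_{p,\theta}^{\gamma+1}(I,\tau,l_2)$ and Theorem \ref{general_g_linear} applies, and confirming that the running-time version of \eqref{eqn 11.27.3} is valid for the stopping time $\tau$.
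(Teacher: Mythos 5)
Your proposal is correct and its core is the same as the paper's: both define $\mathcal R$ via the linear result (Theorem \ref{general_g_linear}), turn the $\varepsilon$--$N_\varepsilon$ bound \eqref{eq_assumption_for_g} into the estimate $\|\mathcal Rv-\mathcal Rw\|^p\le\Theta\|v-w\|^p+N_1\int_0^t\|v-w\|^p\,ds$ with $\Theta$ small, and unwind it by the same binomial computation (the paper takes $\Theta<1/4$ and bounds the sum crudely by $(2\Theta)^m$ times a constant to make $\mathcal R^m$ a contraction; your finer $2^{-n}e^{c\sqrt n}$ bound lets you get away with $\Theta\le 1/2$ and summability of $a_n^{1/p}$ --- an immaterial difference). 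The one place you genuinely diverge is the treatment of the stopping time: the paper first proves everything for $\tau\equiv T$, then handles general $\tau$ by replacing $g$ with $\bar g=1_{t\le\tau}g$, and --- for uniqueness in $\mathfrak H^{\gamma+2}_{p,\theta}(I,\tau)$, where a solution need not extend to $[0,T]$ --- compares a given solution $w$ with the linear solution driven by $\bar g(w)$ and invokes uniqueness of the \emph{deterministic} parabolic equation to identify them on $[0,\tau]$. You instead run the whole contraction directly in the running norms $\|\cdot\|_{\mathfrak H^{\gamma+2}_{p,\theta}(I,\tau\wedge t)}$, which requires the $\tau\wedge t$ analogue of \eqref{eqn 11.27.3}; that analogue does hold (write $\mathbb E\int_0^{\tau\wedge t}\|u(s)\|^p\,ds=\int_0^t\mathbb E\,1_{s\le\tau}\|u(s)\|^p\,ds$ and dominate the integrand by $\mathbb E\sup_{r\le\tau\wedge s}\|u(r)\|^p$, then apply \eqref{Bessel_embedding} with the stopping time $\tau\wedge s$), so your route is sound and in fact avoids the paper's appeal to deterministic uniqueness; the price is that you must verify Theorem \ref{general_g_linear} and the embedding estimates at every stopping time $\tau\wedge t$ rather than only at the terminal time $T$.
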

\begin{proof}

 We follow the proof of  \cite[Theorem 5.1]{kry1999analytic}.  The main difference is that in \cite{kry1999analytic} the result is proved on $\bR^d$ in classical Sobolve spaces without weights.  

{\bf{Step 1}}.  Assume that $\tau = T$.  Note that for any $v\in\mathfrak{H}_{p,\theta}^{\gamma+2}(I,T)$,  $g(v)\in \bH^{\gamma+1}_{p,\theta}(I,T,l_2)$. Indeed,
by \eqref{eq_assumption_for_g},
  \begin{eqnarray*}
 \|g(v)\|_{\bH^{\gamma+1}_{p,\theta}(I,T,l_2)}&\leq& \|g(v)-g(0)\|_{\bH^{\gamma+1}_{p,\theta}(I,T,l_2)}+\|g(0)\|_{\bH^{\gamma+1}_{p,\theta}(I,T,l_2)}\\
 &\leq&\|v\|_{\bH^{\gamma+2}_{p,\theta-p}(I,T)}+K\|v\|_{\bH^{\gamma+1}_{p,\theta}(I,T)} +\|g(0)\|_{\bH^{\gamma+1}_{p,\theta}(I,T,l_2)}\\
 &\leq& N \|v\|_{\mathfrak{H}_{p,\theta}^{\gamma+2}(I,T)}+\|g(0)\|_{\bH^{\gamma+1}_{p,\theta}(I,T,l_2)} <\infty.
 \end{eqnarray*}
 In the last inequality, we used   the relations
 $$
 \|v\|_{\bH^{\gamma+1}_{p,\theta}(I,T)}\leq N \|v\|_{\bH^{\gamma+2}_{p,\theta-p}(I,T)}\leq N\|v\|_{\mathfrak{H}_{p,\theta}^{\gamma+2}(I,T)}.
 $$ 
Thus, according to  Theorem \ref{general_g_linear}, for each $v\in\mathfrak{H}_{p,\theta}^{\gamma+2}(I,T)$, we can define $\mathcal{R}v$ as the unique solution to  the equation
\begin{equation*}
du(t,x) =(au_{xx}+bu_x+cu) dt+ \sum_{k=1}^{\infty} g^k(t,x,v) dw_t^k,\quad t>0 \quad ; u(0,\cdot)=u_0.
\end{equation*}
By   Theorem \ref{general_g_linear}, for  each $v,w\in \mathfrak{H}_{p,\theta}^{\gamma+2}(I,T)$ and $\varepsilon>0$,
\begin{equation} \label{contraction}
\begin{aligned}
\| \mathcal{R}v - \mathcal{R}w \|^p_{\mathfrak{H}_{p,\theta}^{\gamma+2}(I,T)} \leq N \| g(v) - g(w) \|^p_{\mathbb{H}_{p,\theta}^{\gamma+1}(I,T,l_2)} \\
\leq N_0\varepsilon^p\| v-w \|^p_{\mathfrak{H}_{p,\theta}^{\gamma+2}(I,T)} + N_1(\varepsilon) \int_0^T \| v-w \|^p_{\mathfrak{H}_{p,\theta}^{\gamma+2}(I,t)} dt.
\end{aligned}
\end{equation}
Observe that for the second inequality above we used  \eqref{eq_assumption_for_g} and \eqref{eqn 11.27.3}.
Here $N_0=N_0(p,\gamma,\theta,\delta_0, K,T)$  and $N_1=N_1(\varepsilon,p,\gamma,\theta,\delta_0, K,T)$.\\  
Now we fix $\varepsilon$ so that $\Theta:=N_0\varepsilon^p<1/4$. By repeating \eqref{contraction}, we have
\begin{equation*}
\begin{aligned}
&\| \mathcal{R}^mv - \mathcal{R}^mw  \|^p_{\mathfrak{H}_{p,\theta}^{\gamma+2}(I,T)} \\
&\quad\leq  \sum_{k=0}^m \begin{pmatrix} m\\k \end{pmatrix} \Theta^{m-k}\frac{1}{k!}(TN_1)^k\| v-w \|^p_{\mathfrak{H}_{p,\theta}^{\gamma+2}(I,T)} \\
&\quad\leq 2^m\Theta^m\max_k\left( \frac{TN_1}{\Theta^k k!} \right)\| v-w \|^p_{\mathfrak{H}_{p,\theta}^{\gamma+2}(I,T)}\\
&\quad\leq \frac{1}{2^m}N_2 \| v-w \|^p_{\mathfrak{H}_{p,\theta}^{\gamma+2}(I,T)}.
\end{aligned}
\end{equation*}
In the second inequality above, we  used $\sum_{k=0}^m \begin{pmatrix} m\\k \end{pmatrix} = 2^m$.  Thus, if  $m$ is large enough, then $\mathcal{R}^m$ is a contraction mapping in $\mathfrak{H}_{p,\theta}^{\gamma+2}(I,T)$. This certainly gives the unique solvability of equation \eqref{equation_g} in   $\mathfrak{H}_{p,\theta}^{\gamma+2}(I,T)$ together with the desired estimate.

{\bf{Step 2}}. General case. Define
$$
\bar{g}(t, u)=1_{t\leq \tau} g(t,u).
$$
Then, by the assumption on $g$,  $\bar{g}$ satisfies \eqref{eq_assumption_for_g}  for all $t\leq T$. Applying  the result of  Step 1 with $\bar{g}$ and $T$ in place of $g$ and $\tau$, we obtain the  solution   $u\in \mathfrak{H}_{p,\theta}^{\gamma+2}(I,T)$ together with  estimate \eqref{estimate_g}.  Since $\tau\leq T$,  $u$ certainly becomes a solution in $\mathfrak{H}_{p,\theta}^{\gamma+2}(I,\tau)$ with $g$ (in place of $\bar{g}$).

To prove the uniqueness, and let $w\in \mathfrak{H}_{p,\theta}^{\gamma+2}(I,\tau)$ be a solution to equation \eqref{equation_g} with initial data $u_0$. Using Theorem \ref{general_g_linear} (linear case), we define $\tilde{u}\in \mathfrak{H}_{p,\theta}^{\gamma+2}(I,T)$ as the unique solution to 
\begin{equation}
    \label{eqn 12.9.7}
d\tilde{u}=(a\tilde{u}_{xx}+b\tilde{u}_x+c\tilde{u})dt + \sum_{k=1}^{\infty} \bar{g}^k (w)  dw^k_t, \,t>0\, ; \quad \tilde{u}(0,\cdot)=u_0.
\end{equation}
We emphasize that in \eqref{eqn 12.9.7} we have $\bar{g}(w)$, not $\bar{g}(\tilde{u})$.  Then for each fixed $\omega$, $v=w-\tilde{u}$ satisfies the parabolic equation 
$$v_t=av_{xx}+bv_x+cv, \, \quad 0<t\leq \tau, \, \quad v(0,\cdot)=0.$$
Thus, due to the uniqueness of the deterministic equation (see  \cite[Theorem 2.10]{KK2}), we conclude $\tilde{u}=w$ for $t\leq \tau$.  Consequently one can replace $\bar{g}(w)$ in 
\eqref{eqn 12.9.7} by $\bar{g}(\tilde{u})$. It follows $\tilde{u}=u$ due to the uniqueness result in $\mathfrak{H}_{p,\theta}^{\gamma+2}(I,T)$  proved in Step 1, and the uniqueness also follows. The theorem is proved.
\end{proof}

\section{The case: \texorpdfstring{$\lambda=0$}{Lg}}

In this section we introduce an extension  of Theorem \ref{theorem_general_lambda} when $\lambda=0$. 

For this, we  consider  the following equation
\begin{equation} \label{equation_lambda_0}
\begin{aligned}
&du = (au_{xx}+bu_{x}+cu)dt 
+\sum_{k=1}^{\infty} \xi h\eta_k dw_t^k,\, 0<t\leq \tau;\,u(0,\cdot) = u_0,
\end{aligned}
\end{equation}
where $\xi=\xi(\omega,t,x)$, $h=h(u)=h(\omega,t,x,u)$ and $\eta_k = \eta_k(x)$.

\begin{assumption}
   \label{coeff again}
The coefficients $a(t,x)$, $b(t,x)$, $c(t,x)$ are $\mathcal{P}\times\mathcal{B}(I)$-measurable, and 
there exist    constants  $\delta_0, K >0 $ such that
$$
a(t,x)\geq \delta_0, \quad \forall \, \omega, t,x.
$$
and
$$
 | a(t,\cdot)|_{C^2(I)}+| b(t,\cdot)|_{C^2(I)}+|c(t,\cdot)|_{C^2(I)} <K, \quad \forall \, \omega,t.
$$
\end{assumption}
   
\begin{assumption} [$\tau,s$]\label{assumption_for_h}
(i) The function $h=h(\omega,t,x,u)$ is $\mathcal{P}\times\mathcal{B}(I)\times\mathcal{B}(\mathbb{R})$-measurable, and 
there exists a constant $K'$ such that for any $\omega,t,x,u,v$ 
\begin{equation} \label{eq_assumption_for_h}
\begin{aligned}
| h(t,x,u)-h(t,x,v) |  \leq K'| u-v |.
\end{aligned}
\end{equation}
(ii) The function $\xi=\xi(\omega, t,x)$ is $\mathcal{P}\times\mathcal{B}(I)$-measurable, and 
there exist $\theta'<s$ and $K''>0$ such that
\begin{equation} \label{2s_bound_of_xi}
\| \xi(t,\cdot) \|_{L_{2s,\theta'}(I)}\leq K'',\quad \forall\, \omega \in \Omega, \,\, t\in [0,\tau]\cap [0,\infty),
\end{equation}
where $\| \xi(t,\cdot) \|_{L_{\infty,\theta'}(I)} = \sup_{x\in I}|\xi(t,x)|$.
\end{assumption}

With help of Remark \ref{remark 12.11.1}, we may assume $\theta'>0$. Note that if $h(u)=|u|$ or $h(u)=u$ then \eqref{eq_assumption_for_h} is obviously satisfied.  Thus,  if $\lambda=0$, then  Theorem \ref{theorem_general_lambda}  is a particular case of the following result.


\begin{thm} \label{theorem_lambda_0}
Let  $\tau \leq T$, $p\geq2$, $\theta\in (0,p)$,  $\kappa\in (0,1/2)$ and $s>1$ such that
\begin{equation*}
 \frac{1}{2\kappa}<s\leq \infty,\quad p\geq\frac{2s}{s-1} \quad \quad (\frac{2 \infty}{\infty}:=2).
\end{equation*} 
Suppose $u_0\in U_{p,\theta}^{1/2-\kappa}(I)$ and Assumptions \ref{coeff again} and \ref{assumption_for_h}($\tau,s$) hold.
Then,  equation \eqref{equation_lambda_0}
has a unique solution $u$ in $\mathfrak{H}_{p,\theta}^{1/2-\kappa}(I,\tau)$, and for this   solution 
\begin{equation} \label{estimate_lambda_0}
\| u \|_{\mathfrak{H}_{p,\theta}^{1/2-\kappa}(I,\tau)} \leq N\| u_0 \|_{U_{p,\theta}^{1/2-\kappa}(I)},
\end{equation} 
where $N=N(p,\kappa, \theta,K,K',K'',\delta_0,T)$. Furthermore, if
\begin{gather*} 
0<\theta\leq\left(\kappa+\frac{1}{2}\right)p+1,\\
\frac{1}{p}<\alpha<\beta<\frac{1}{2}, \quad \frac{1}{2}-\kappa-2\beta-\frac{1}{p}>0,\\
0\leq\delta <\frac{1}{2}-\kappa-2\beta-\frac{1}{p},
\end{gather*}
then  (a.s.)
\begin{equation} \label{embedding_in_thm_2}
|\psi^{-\frac{1}{2}-\kappa-\frac{1}{p}+\frac{\theta}{p}-\delta}u|^p_{C^{\alpha-\frac{1}{p}}([0,\tau];C^{\frac{1}{2}-\kappa-2\beta-\frac{1}{p}-\delta} (I))} < \infty.
\end{equation}
\end{thm}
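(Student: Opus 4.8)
The plan is to recognize equation \eqref{equation_lambda_0} as an instance of the abstract nonlinear equation \eqref{equation_g} with noise coefficient $g^k(u) = \xi\, h(u)\,\eta_k$, viewed as the $l_2$-valued map $g(u) = \xi\, h(u)\,\eta$ with $\eta = (\eta_1,\eta_2,\dots)$, and to verify that it satisfies Assumption \ref{assumption_for_g}($\tau$) with $\gamma := -3/2-\kappa$, so that $\gamma+2 = 1/2-\kappa$ and $\gamma+1 = -1/2-\kappa$. Since $|\gamma| = 3/2+\kappa \le 2$, the hypothesis $|\gamma|\le 2$ of Theorem \ref{general_for_g} is met, and once Assumption \ref{assumption_for_g}($\tau$) is checked, that theorem delivers the unique solution $u\in\mathfrak{H}^{1/2-\kappa}_{p,\theta}(I,\tau)$ together with estimate \eqref{estimate_lambda_0} (the $g(0)$ term vanishing in the main application $h(u)=|u|$, and being absorbed by the same bound below in general). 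The predictability and measurability parts of Assumption \ref{assumption_for_g}($\tau$)(i) follow directly from the $\mathcal{P}\times\mathcal{B}(I)$-measurability of $\xi$ and the $\mathcal{P}\times\mathcal{B}(I)\times\mathcal{B}(\mathbb{R})$-measurability of $h$.

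The heart of the matter is the Lipschitz-type estimate \eqref{eq_assumption_for_g}, i.e. controlling $\|g(u)-g(v)\|_{H^{-1/2-\kappa}_{p,\theta}(I,l_2)}$. Since $h$ is Lipschitz in $u$ with constant $K'$, setting $w := u-v$ gives $|\xi(h(u)-h(v))| \le K'|\xi|\,|w|$ pointwise, so it suffices to bound $\|\xi w\,\eta\|_{H^{-1/2-\kappa}_{p,\theta}(I,l_2)}$ by a norm of $w$. The key observation is that because $\{\eta_k\}$ is an \emph{orthonormal basis} of $L_2(I)$, the negative-order $l_2$-norm collapses by Parseval's identity: applying the kernel representation \eqref{convolution}--\eqref{kernel}, each component is $(1-\Delta)^{(-1/2-\kappa)/2}(\xi w\,\eta_k)(x) = c(\kappa)\,\langle R_\kappa(x-\cdot)\,\xi w,\ \eta_k\rangle_{L_2(I)}$, whence
\begin{equation*}
\bigl|(1-\Delta)^{(-1/2-\kappa)/2}(\xi w\,\eta)(x)\bigr|_{l_2}^2 = c(\kappa)^2\int_I |R_\kappa(x-y)|^2\,|\xi(y)\,w(y)|^2\,dy .
\end{equation*}
Thus the $l_2$-structure of the white noise is entirely absorbed, and the $H^{-1/2-\kappa}_{p}$-norm of $\xi w\,\eta$ is reduced to the $L_{p/2}$-norm of the convolution $|R_\kappa|^2 * |\xi w|^2$. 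I would then apply H\"older's inequality, splitting $|\xi w|^2$ so as to use the bound $\|\xi\|_{L_{2s,\theta'}}\le K''$ together with the integrability $\|R_\kappa\|_{L_{2r}}<\infty$ for $r<(1-2\kappa)^{-1}$ from \eqref{eqn 12.14.1}; the hypotheses $s>(2\kappa)^{-1}$ and $p\ge 2s/(s-1)$ are precisely what place the resulting exponents in the admissible range and close the estimate, yielding $\|\xi w\,\eta\|_{H^{-1/2-\kappa}_{p,\theta}(I,l_2)}\le N\|w\|_{H^{\sigma}_{p,\theta''}(I)}$ for some $\sigma\in(-1/2-\kappa,\,1/2-\kappa)$ and a weight $\theta''\in(\theta-p,\theta)$. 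Since $-1/2-\kappa<0<1/2-\kappa$ when $\kappa<1/2$, the interpolation inequality of Corollary \ref{interpol_cor}(i) then splits this bound into the $\varepsilon\|w\|_{H^{1/2-\kappa}_{p,\theta-p}} + N_\varepsilon\|w\|_{H^{-1/2-\kappa}_{p,\theta}}$ form demanded by \eqref{eq_assumption_for_g}, and the same kernel estimate applied to the constant $h(0)$ handles $g(0)\in\bH^{-1/2-\kappa}_{p,\theta}(I,\tau,l_2)$.

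I expect the main obstacle to be carrying the clean whole-line Parseval/H\"older computation through the \emph{weighted, localized} norm $H^{-1/2-\kappa}_{p,\theta}(I)$, which is defined via the dyadic partition $\zeta_n$ and the scalings $x\mapsto e^n x$ in \eqref{def} rather than directly on $\mathbb{R}$. One must estimate each localized piece $\zeta_{-n}(e^n\cdot)(\xi w\,\eta)(e^n\cdot)$ in $H^{-1/2-\kappa}_p(\mathbb{R})$ and sum against the weights $e^{n\theta}$, tracking how the only asymptotically self-similar kernel $R_\kappa$ and the weight $\psi$ interact under scaling (the mass term in $(1-\Delta)$ breaks exact homogeneity). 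This is where the interplay among the three weight exponents $\theta,\theta',\theta''$, together with the hypotheses $\theta'<s$ and $\theta\in(0,p)$, is genuinely used, and where the bookkeeping ensuring that $N$ depends only on the stated parameters is most delicate. By contrast, the regularity assertion \eqref{embedding_in_thm_2} is immediate: under the stated ranges of $\theta,\alpha,\beta,\delta$ it is a direct application of Corollary \ref{embedding_corollary} to the solution $u\in\mathfrak{H}^{1/2-\kappa}_{p,\theta}(I,\tau)$.
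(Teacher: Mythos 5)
Your proposal follows essentially the same route as the paper: reduce to Theorem \ref{general_for_g} with $\gamma=-3/2-\kappa$, collapse the $l_2$-sum by Parseval using the orthonormality of $\{\eta_k\}$ and the kernel representation \eqref{convolution}, apply H\"older with $\|\xi\|_{L_{2s,\theta'}}\le K''$ and $\|R_\kappa\|_{L_{2r}}<\infty$ for $r=s/(s-1)$ (plus Minkowski, using $p\ge 2r$) to land in a weighted $L_p$-norm of $w$, interpolate via Corollary \ref{interpol_cor}, and obtain \eqref{embedding_in_thm_2} from Corollary \ref{embedding_corollary}. The technical difficulty you anticipate (pushing the computation through the dyadic localization $\zeta_{-n}$ and the scalings $e^n x\to x$) is exactly where the paper's computation \eqref{eqn 12.5.1}--\eqref{during_proof_1} lives, and your handling of it is consistent with the paper's.
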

\begin{proof}
Put $\gamma= -\frac{3}{2}-\kappa$. Then, obviously $\gamma+2=1/2-\kappa$. Thus, to prove the theorem it is enough to check that the assumptions in  Theorem \ref{general_for_g} hold with $\gamma=-3/2-\kappa$.

Note that, due to Assumption  \ref{coeff again}, all the assumptions in Theorem \ref{general_for_g} holds  if 
$$g(u):= (\xi h(u)\eta_1, \xi h(u)\eta_2, \cdots)$$
satisfies \eqref{eq_assumption_for_g}:  for any given $\varepsilon>0$, there exists a constant $N$ such that 
\begin{equation} \label{claim}
\begin{aligned}
\|g(t,x,u) - g(t,x,v)&\|_{H^{-1/2-\kappa}_{p,\theta}(I,l_2)} \\
&\leq \varepsilon\| u-v \|_{H_{p,\theta-p}^{1/2-\kappa}(I)} + N\| u-v \|_{H_{p,\theta}^{-1/2-\kappa}(I)},
\end{aligned}
\end{equation}
for all $\omega \in \Omega,t\leq \tau$ and  $u,v\in H_{p,\theta-p}^{1/2-\kappa}(I)$. 

Below by $g(u)(e^nx)$ and $h(u)(x)$ we mean $g(t,e^nx, u(e^nx))$ and $h(t,x,u(x))$ respectively. 
By definitions of $L_p$-norms, 
\begin{align}
&\|g(u) - g(v)\|^p_{H_{p,\theta}^{-1/2-\kappa}(I,l_2)} \nonumber\\
&\quad= \sum_{n}e^{n\theta} \| \zeta_{-n}(e^n\cdot) [g(u)(e^n\cdot)-g(v)(e^n\cdot)] \|^p_{H_{p}^{-1/2-\kappa}(I,l_2)}\nonumber\\
&\quad= \sum_n e^{n\theta} \int_{\mathbb{R}} |(1-\Delta)^{(-1/2-\kappa)/2}\left([\zeta_{-n}(g(u)-g(v))(e^n\cdot)]\right)(x)|_{l_2}^p dx.   \label{eqn 12.5.1}
\end{align}
Put $w=u-v$. Then using \eqref{convolution}, changing of variables $e^nx \to x$, and Parseval's identity,   we have
\begin{equation}   \label{computation_about_l2}
\begin{aligned}
&|(1-\Delta)^{(-1/2-\kappa)/2}\left(\zeta_{-n}(e^n\cdot)\left[g(u)(e^n\cdot)-g(v)(e^n\cdot)\right]\right)(x)|_{l_2}^2\\
& = \sum_{k} \left( \int_{\mathbb{R}} R_{\kappa}(x-y)\zeta_{-n}(e^ny)\big[g^k(u)(e^ny)-g^k(v)(e^ny) \big]  dy \right)^2 \\
& = e^{-2n}\sum_{k} \left( \int_{\mathbb{R}} R_{\kappa}(x-e^{-n}y)(\zeta_{-n} \xi)(y)\left[(h(u)-h(v))(y) \right]\eta_k(y)dy \right)^2  \\
& = e^{-2n}\int_{\mathbb{R}} R^2_{\kappa}(x-e^{-n}y)\zeta^2_{-n}(y) \xi^2(y)\left[h(t,y,u)-h(t,y,v)\right]^2 dy \\
& \leq N e^{-2n}\int_{\mathbb{R}} R^2_{\kappa}(x-e^{-n}y)\xi^2(y)|w(y)|^2\zeta^2_{-n}(y) dy \\
& = Ne^{-n}\int_{\mathbb{R}} R^2_{\kappa}(y)\xi^2(e^n(x-y))|w(e^n(x-y))|^2 \zeta^2_{-n}(e^n(x-y)) dy.
\end{aligned}
\end{equation}
 Assume $s<\infty$, and put $r:=s/(s-1)$. Then, since we assume $2\kappa s>1$, we have $(1-2\kappa)r<1$, and therefore   
 $\|R_{\kappa}\|_{L_{2r}(\bR)}<\infty$ by \eqref{eqn 12.14.1}. 
  Coming back to \eqref{eqn 12.5.1}, and then using 
H\"older's inequality,
\begin{equation}  \label{main_computation_2}
\begin{aligned}
&\|g(u) - g(v)\|^p_{H_{p,\theta}^{-1/2-\kappa}(I,l_2)}  \\
&\quad\leq N\sum_{n}e^{n(\theta-\frac{p}{2})} \int_{\mathbb{R}} \Big(\int_{\mathbb{R}} R^2_{\kappa}(y)\xi^2(t,e^n(x-y)) \\
&\quad\quad\quad\quad\quad\quad\times |w(e^n (x-y))|^2\zeta_{-n}^2(e^n(x-y)) dy \Big)^\frac{p}{2} dx  \\
&\quad\leq N \sum_{n}e^{n(\theta-\frac{p}{2}(1+\frac{\theta'}{s}))}  \left(e^{n\theta'}\int_{\mathbb{R}} \xi^{2s}(t,e^ny)\zeta_{-n}^{s}(e^ny) dy \right)^{\frac{p}{2s}}\\
&\quad\quad\quad\quad\times\int_{\mathbb{R}}\left(\int_{\mathbb{R}} R^{2r}(y)|w(e^n (x-y))|^{2r}\zeta_{-n}^{r}(e^n (x-y)) dy \right)^\frac{p}{2r} dx.
\end{aligned}
\end{equation}
Observe that for each fixed $n\in\bZ$, we have
\begin{equation} \label{during_proof_2}
\begin{aligned}
e^{n\theta'}\int_{\bR}&|\xi(t,e^ny)|^{2s}\zeta_{-n}^s(e^ny)dy \leq \sum_{n} e^{n\theta'}\int_{\bR}|\xi(t,e^ny)|^{2s}\zeta_{-n}^s(e^ny)dy \\
&\leq N(\zeta)\sum_{n} e^{n\theta'}\int_{\bR}|\xi(t,e^ny)|^{2s}\zeta_{-n}^{2s}(e^ny)dy = \| \xi(t,\cdot) \|_{L_{2s,\theta'}(I)}^{2s}.
\end{aligned}
\end{equation}
By Remark \ref{indep_zeta}, \eqref{during_proof_2} and Mink\"owski's inequality(recall $p\geq2r$), we have
\begin{equation} \label{during_proof_1}
\begin{aligned}
&\sum_{n}e^{n(\theta-\frac{p}{2}(1+\frac{\theta'}{s}))}  \left(e^{n\theta'}\int_{\mathbb{R}} |\xi(t,e^ny)|^{2s}\zeta_{-n}^{s}(e^ny) dy \right)^{\frac{p}{2s}}\\
&\quad\quad\times\int_{\mathbb{R}}\left(\int_{\mathbb{R}} R^{2r}_{\kappa}(y)|w(t,e^n (x-y))|^{2r}\zeta_{-n}^{r}(e^n (x-y)) dy \right)^\frac{p}{2r} dx \\
& \leq N \| \xi(t,\cdot) \|_{L_{2s,\theta'}(I)}^p\sum_{n}e^{n(\theta-\frac{p}{2}(1+\frac{\theta'}{s}))} \\
&\quad\quad\times \int_{\mathbb{R}}\left(\int_{\mathbb{R}} R^{2r}_{\kappa}(y)|w(t,e^n (x-y))|^{2r}\zeta_{-n}^{r}(e^n(x-y)) dy \right)^\frac{p}{2r} dx  \\
&\leq  N \| \xi(t,\cdot) \|_{L_{2s,\theta'}(I)}^p \|R_{\kappa}\|^p_{L_{2r}(\bR)}\sum_{n}e^{n(\theta-\frac{p}{2}(1+\frac{\theta'}{s}))} \\
&\quad\quad\times  \int_{\mathbb{R}} |w(e^ny)|^{p}\zeta_{-n}^{p}(e^ny) dy\\
&= N  \| w\|_{L_{p,\theta-(1+\theta'/s)p/2}(I)}^p, \end{aligned}
\end{equation}
where $N =  \| \xi(t,\cdot) \|_{L_{2s,\theta'}(I)}^p \|R_{\kappa}\|^p_{L_{2r}} \leq (K'')^p\|R_{\kappa}\|^p_{L_{2r}}< \infty$. \\
Also, by Corollary \ref{interpol_cor},
\begin{equation*}
\begin{aligned}
\| w \|_{L_{p,\theta-p/2(1+\theta'/s)}(I)}^p &\leq \varepsilon\| w(t,\cdot)\|_{H^{1/2-\kappa}_{p,\theta-p}(I)}^p + N_{\varepsilon}\| w(t,\cdot) \|_{H_{p,\theta}^{-1/2-\kappa}(I)}^p,
\end{aligned}
\end{equation*} 
since $\kappa\in (0,1/2)$ and $\theta-p<\theta-\frac{p}{2}(1+\frac{\theta'}{s})<\theta$.
Therefore, we have \eqref{claim} when $s\in(1,\infty)$. 

If $s = \infty$,   we can take out $\xi$ out of integral as $\| \xi \|_\infty$ in \eqref{main_computation_2} so that we get
\begin{equation} \label{main_calculation_bdd}
\begin{aligned}
&\|g(u)-g(v) \|^p_{H_{p,\theta}^{-1/2-\kappa}(I,l_2)} \\
&\quad\leq N \sum_{n}e^{n(\theta-\frac{p}{2})}  \int_{\mathbb{R}} |w(e^ny)|^{p}\zeta_{-n}^{p}(e^ny) dy = N\|u-v\|^p_{L_{p,\theta-p/2}(I)}.
\end{aligned}
\end{equation}
 By Corollary \ref{interpol_cor} again,
\begin{equation} \label{main_calculation_bdd_2}
\begin{aligned} 
\| w \|^p_{L_{p,\theta-p/2}(I)} \leq \varepsilon \| w \|^p_{H^{1/2-\kappa}_{p,\theta-p}(I)}+N_{\varepsilon} \| w \|^p_{H^{-1/2-\kappa}_{p,\theta}(I)}.
\end{aligned}
\end{equation}
Therefore, we get \eqref{claim}, and the first claim of the theorem is proved.

Finally, \eqref{embedding_in_thm_2} is a consequence of  \eqref{embedding_remark}
 and \eqref{estimate_lambda_0}.
The theorem is proved.
\end{proof}



The following result, maximum principle, will be used in the next section.

\begin{thm}[Maximum principle]
\label{thm maximum}
Suppose  $p\geq 2$, $\theta\in (0,p)$,   $\kappa\in (0,1/2)$,  $u_0\in U_{p,\theta}^{1/2-\kappa}(I)$, $u_0\geq 0$ and
$$
\sup_{\omega} \sup_{t\leq \tau} \|\xi(t,\cdot)\|_{\infty} <\infty,
$$
where $\tau \leq T$ is a stopping time.  Let  $u \in \mathfrak{H}_{p,\theta}^{1/2-\kappa}(I,\tau)$ be the unique solution (taken from Theorem \ref{theorem_lambda_0}) to the equation
$$
du = (au_{xx}+bu_{x}+cu) \,dt + \sum_{k=1}^{\infty} \xi u\eta_{k} \,dw_t^k,\quad t\leq \tau;\quad u(0,\cdot) = u_0.
$$
Then $u(t,\cdot)\geq 0$ for all $t\leq \tau$ (a.s.).
\end{thm}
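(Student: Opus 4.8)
The plan is to prove nonnegativity by an energy argument applied to the negative part $u^-:=\max(-u,0)$, combined with a regularization that circumvents the two genuine difficulties: the solution $u$ lives below $H^1$ (so the It\^o formula for the $L_p$-norm is not directly available), and for the \emph{cylindrical} noise the It\^o correction term is formally $\frac{p(p-1)}{2}\int (u^-)^{p}\xi^2(\sum_k\eta_k^2)\,\rho^{\theta-1}dx$, which diverges since $\sum_k\eta_k^2=+\infty$. I would first reduce to a regular setting in which both obstacles disappear, prove the maximum principle there by a direct Gronwall argument, and then pass to the limit, using that positivity is a closed condition.

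\emph{Reductions.} I would (i) approximate $u_0$ by nonnegative $u_0^{(m)}\in C_c^\infty(I)$ with $u_0^{(m)}\to u_0$ in $U_{p,\theta}^{1/2-\kappa}(I)$ (possible because $u_0\ge0$, by mollifying the associated nonnegative measure), (ii) replace $\xi$ by a smooth $\xi^{(\delta)}$ and use smooth basis functions $\eta_k$, and (iii) truncate the noise to $\ell$ terms. For finite $\ell$ the noise coefficient $(\xi^{(\delta)}\eta_1u,\dots,\xi^{(\delta)}\eta_\ell u,0,\dots)$ is a smooth, bounded, \emph{zeroth-order} multiplier of $u$; by the pointwise multiplier estimate of Lemma \ref{prop_of_bessel_space}(v) (now applicable since the coefficients are $C^1$) it satisfies Assumption \ref{assumption_for_h} with $\gamma=0$, so Theorem \ref{general_for_g} yields a solution $u=u_{\delta,\ell,m}\in\mathfrak{H}^{2}_{p,\theta}(I,\tau)$. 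In particular $u(t,\cdot)\in H^2$ with $au_{xx}+bu_x+cu\in\bH^{0}$, which is exactly the regularity needed to run the It\^o formula for the $L_p$-norm of $\mathfrak{H}^2_{p,\theta}$-valued processes.

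\emph{It\^o and Gronwall at the regular level.} With $\Phi(r)=(r^-)^p$ (a $C^2$ convex function for $p>2$, handled at $p=2$ by smooth approximation) I would apply the It\^o formula to $X_t:=\int_I\Phi(u(t,x))\rho^{\theta-1}dx$. The drift is
\[
\int_I\Phi'(u)(au_{xx}+bu_x+cu)\rho^{\theta-1}dx+\tfrac12\int_I\Phi''(u)\sum_{k\le\ell}(\xi^{(\delta)})^2\eta_k^2\,u^2\,\rho^{\theta-1}dx.
\]
Integrating the principal term by parts produces the dissipative contribution $-\int_I\Phi''(u)\,a\,u_x^2\,\rho^{\theta-1}dx\le0$ (here $a\ge\delta_0>0$ and $\Phi''\ge0$), against which the first-order commutator and the weight-derivative term $\partial_x\rho^{\theta-1}\sim\rho^{\theta-2}$ are absorbed by Cauchy--Schwarz and Hardy's inequality; this is where $\theta\in(0,p)$ enters. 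The key structural point is that, by $\Phi''(r)r^2=p(p-1)(r^-)^p$, the It\^o correction equals $\frac{p(p-1)}{2}\int_I(u^-)^{p}(\xi^{(\delta)})^2(\sum_{k\le\ell}\eta_k^2)\rho^{\theta-1}dx$; it vanishes where $u\ge0$ by homogeneity of the noise and, for fixed $\ell$, is bounded by $C_\ell X_t$ with $C_\ell=\frac{p(p-1)}{2}\|\xi^{(\delta)}\|_\infty^2\sup_x\sum_{k\le\ell}\eta_k^2<\infty$. Taking expectations kills the martingale, and $\bE X_t\le C_\ell\int_0^t\bE X_s\,ds$; since $u_0^{(m)}\ge0$ gives $X_0=0$, Gronwall forces $X_t\equiv0$, i.e. $u_{\delta,\ell,m}\ge0$ a.s.

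\emph{Passage to the limit.} Finally I would remove the three approximations using the linear estimate of Theorem \ref{general_g_linear} and the a priori bound \eqref{estimate_lambda_0}. In each case the difference of two solutions solves a linear equation with homogeneous noise and an inhomogeneous source given by either $(\,\dots,\xi u\eta_{\ell+1},\dots)$ (whose $\bH^{-1/2-\kappa}_{p,\theta}(l_2)$-norm is the tail of the series shown finite in the proof of Theorem \ref{theorem_lambda_0}, hence $\to0$ as $\ell\to\infty$), or $\sum_k(\xi^{(\delta)}-\xi)u\eta_k\to0$ as $\delta\to0$, or the difference of initial data; thus $u_{\delta,\ell,m}\to u$ in $\mathfrak{H}^{1/2-\kappa}_{p,\theta}(I,\tau)$ after the three nested limits. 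By the embedding of Theorem \ref{embedding}(ii) (cf. Corollary \ref{embedding_corollary}) this convergence upgrades, after weighting, to convergence in $C([0,\tau];H^{1/2-\kappa-2\beta}_{p,\theta}(I))$, so along a subsequence $(u_{\delta,\ell,m}(t,\cdot),\phi)\to(u(t,\cdot),\phi)$ for every $t\le\tau$ and every fixed $\phi\in C_c^\infty(I)$, a.s. Since each approximant satisfies $(u_{\delta,\ell,m}(t,\cdot),\phi)\ge0$ for $\phi\ge0$, so does the limit, which is precisely $u(t,\cdot)\ge0$ for all $t\le\tau$ (a.s.).

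I expect the main obstacle to be the It\^o-correction term, which genuinely diverges for the cylindrical noise. The truncation is what tames it: for each finite $\ell$ the constant $C_\ell$ is finite and Gronwall closes, and although $C_\ell\to\infty$ this is harmless because positivity of $u_{\delta,\ell,m}$ is inherited in the limit. The secondary obstacle is that the It\^o formula requires $H^1$ regularity, unavailable for the white-noise solution itself; this is resolved by smoothing $\xi$ and the data and truncating the noise so that the approximate solutions sit in $\mathfrak{H}^2_{p,\theta}$, with the weight-derivative and first-order terms absorbed into the dissipation via Hardy's inequality.
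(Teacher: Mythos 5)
Your overall architecture --- approximate the initial data, truncate the noise, prove positivity for the regular approximants, then pass to the limit using stability of the linear equation plus closedness of positivity under the embedding --- is exactly the paper's. The difference is in how positivity of the approximants is obtained. The paper lifts the truncated solution only to $\mathfrak{H}^{1}_{p,\theta}(I,T)$ (which needs nothing more than $g_m(u_m)\in\bL_{p,\theta}(I,T,l_2)$, hence no smoothing of $\xi$ and no bootstrap) and then invokes the classical maximum principle of \cite{krylov2007maximum}, whose hypotheses are purely local ($\phi u_m$, $\phi D_xu_m\in L_2$ for $\phi\in C^\infty_c(I)$), so the degenerate weight at $\partial I$ never enters. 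You instead push the approximants to $\mathfrak{H}^{2}_{p,\theta}$ and reprove the maximum principle by an It\^o--Gronwall estimate on $X_t=\int_I(u^-)^p\rho^{\theta-1}dx$. The structural observations there are correct: the homogeneity $\Phi''(r)r^2=p(p-1)(r^-)^p$ is what makes the It\^o correction proportional to $X_t$ with a constant that blows up only as $\ell\to\infty$ (harmlessly, since positivity survives the limit), and $X_0=0$ plus Gronwall then forces $u^-\equiv0$.

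Two steps are underspecified enough to matter. First, the weighted integration by parts: the commutator with $\rho^{\theta-1}$ produces a term of order $\int_I(u^-)^p\rho^{\theta-3}\,dx$, and absorbing it into the dissipation $\int_I(u^-)^{p-2}u_x^2\rho^{\theta-1}\,dx$ is a sharp-constant Hardy inequality (for $w=(u^-)^{p/2}\sim\rho^{(2-\theta)/2}$ both sides are log-divergent at the same rate); it does close on the range $\theta\in(0,p)$, but this is essentially the whole analytic content of the weighted theory and cannot be dispatched by "Cauchy--Schwarz and Hardy" --- it is precisely the computation the paper avoids by using a local maximum principle. Relatedly, $u_x$ may blow up like $\psi^{-1-\theta/p}$ at $\partial I$, so the boundary terms in the integration by parts need a cutoff $\phi(k\psi)$ as in Lemma \ref{key_lemma}, not just formal vanishing. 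Second, removing the smoothing of $\xi$: a mollification $\xi^{(\delta)}$ of a merely bounded predictable $\xi$ does not converge to $\xi$ in $L_\infty$, so the $s=\infty$ estimate \eqref{main_calculation_bdd} gives no smallness for the source $(\xi^{(\delta)}-\xi)u\boldsymbol{\eta}$; this limit must be routed through the $s<\infty$ estimate \eqref{main_computation_2} with $\|\xi^{(\delta)}-\xi\|_{L_{2s,\theta'}(I)}\to0$, which brings in the side conditions $2\kappa s>1$ and $p\ge 2s/(s-1)$. Both issues are fixable, but they make your route substantially heavier; stopping at $\mathfrak{H}^1$ and citing \cite{krylov2007maximum}, as the paper does, is what lets one skip both.
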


\begin{proof}
Considering $\bar{\xi}:=\xi 1_{t\leq \tau}$ in place of $\xi$, we may assume $\tau\equiv T$. By the  existence result (for $\bar{\xi}$) in $\mathfrak{H}_{p,\theta}^{1/2-\kappa}(I,T)$ and the uniqueness result in  
$\mathfrak{H}_{p,\theta}^{1/2-\kappa}(I,\tau)$, there is $\bar{u}\in \mathfrak{H}_{p,\theta}^{1/2-\kappa}(I,T)$ such that $u(t)=\bar{u}(t)$ for $t\leq \tau$. 
Take a sequence of   functions $u^n_0\geq 0$ such that $ u^n_0\in U^1_{p,\theta}(I)$, and $u^n_0 \to u_0$ in $U^{1/2-\kappa}_{p,\theta}(I)$ (see e.g. the proof of \cite[Theorem 1.19]{kry1999weighted} for detail). For $m=1,2,\cdots$, put
\begin{gather*}
\boldsymbol{\eta}=(\eta_1, \eta_2, \cdots), \quad \boldsymbol{\eta}_m=(\eta_1,\eta_2,\cdots,\eta_m,0,0,\cdots), \\
g(u)=\xi u\boldsymbol{\eta}=(\xi u \eta_1, \xi u\eta_2, \cdots),\\
g_m(u)=\xi u\boldsymbol{\eta}_m=(\xi u \eta_1, \xi u\eta_2, \cdots,\xi  u \eta_m,0,0,\cdots).
\end{gather*}
Then the calculations in \eqref{computation_about_l2} show that for any $u,v\in H^{1/2-\kappa}_{p,\theta-p}(I)$,
\begin{equation}
   \label{eqn 12.6.5}
\|g_m(u)-g_m(v)\|_{H^{-1/2-\kappa}_{p,\theta}(I,l_2)} \leq \|g(u)-g(v)\|_{H^{-1/2-\kappa}_{p,\theta}(I,l_2)}.
\end{equation}
Thus, \eqref{claim}  and  all the assumptions in  Theorem \ref{general_for_g}  hold  with $g_m(u)$ in place of $g(u)$. Hence we can define $u_m \in \mathfrak{H}_{p,\theta}^{1/2-\kappa}(I,T)$ as the unique solution to
$$
du=(au_{xx}+bu_x+cu)dt+\sum_{k=1}^{\infty} g^{k}_m (u)dw^k_t, \,\, 0<t\leq T\, ; \quad u(0,\cdot)=u^m_0.
$$
To finish the proof,  we show that  $u_m \to u$ in $\mathfrak{H}_{p,\theta}^{1/2-\kappa}(I,T)$ and $u_m \geq 0$.
Note  that $v_m:=u-u_m$ satisfies
\begin{align*}
&dv=(av_{xx}+bv_x+cv)dt+\sum_{k=1}^{\infty} (g^k(u)-g^{k}_m (u_m))dw^k_t,\quad0<t\leq T\\
&v(0,\cdot)=u_0(\cdot)-u^m_0(\cdot),
\end{align*}
and
$$
g(u)-g_{m}(u_m)=g_m(u)-g_m(u_m)+ \xi u ( \boldsymbol{\eta}-\boldsymbol{\eta}_m) .
$$ 
Thus, by Theorem \ref{general_g_linear} with $\tau=t$, \eqref{eqn 12.6.5} and \eqref{claim},  we get  for any  $t \leq T$,
\begin{eqnarray*}
&&\| u-u_m \|^p_{\mathfrak{H}_{p,\theta}^{1/2-\kappa}(I,t)}\\
&&  \quad   \leq N\|u_0-u^m_0\|^p_{U^{1/2-\kappa}_{p,\theta}(I)}+N\|g_m(u)-g_m(u_m))\|^p_{\mathbb{H}_{p,\theta}^{-1/2-\kappa}(I,t,l_2)} \\
&&\quad\quad+ \| \xi u ( \boldsymbol{\eta}-\boldsymbol{\eta}_m)\|^p_{\mathbb{H}_{p,\theta}^{-1/2-\kappa}(I,t,l_2)}\\
&&\quad\leq N\|u_0-u^m_0\|^p_{U^{1/2-\kappa}_{p,\theta}(I)}+ \frac{1}{2}\| u-u_m \|^p_{\mathbb{H}^{1/2-\kappa}_{p,\theta-p}(I,t)} \\
&&\quad\quad+N\| u-u_m \|^p_{\mathbb{H}^{-1/2-\kappa}_{p,\theta}(I,t)}+N \| \xi u ( \boldsymbol{\eta}-\boldsymbol{\eta}_m)\|^p_{\mathbb{H}_{p,\theta}^{-1/2-\kappa}(I,t,l_2)}.
\end{eqnarray*}
 Therefore, for any $t\leq T$,
\begin{eqnarray*}
&&\| u-u_m \|^p_{\mathfrak{H}_{p,\theta}^{1/2-\kappa}(I,t)} \leq  N\|u_0-u^m_0\|^p_{U^{1/2-\kappa}_{p,\theta}(I)}+ N\| u-u_m \|^p_{\mathbb{H}^{-1/2-\kappa}_{p,\theta}(I,t)} \\
&&\quad \quad \quad \quad  \quad +N \| \xi u ( \boldsymbol{\eta}-\boldsymbol{\eta}_m)\|^p_{\mathbb{H}_{p,\theta}^{-1/2-\kappa}(I,T,l_2)}.
\end{eqnarray*}
This, \eqref{eqn 11.27.3},  and Gronwall's inequality yield
\begin{equation} \label{during_proof_12}
\begin{aligned}
\| u-u_m\|^p_{\mathfrak{H}_{p,\theta}^{1/2-\kappa}(I,T)}\leq N&\|u_0-u^m_0\|^p_{U^{1/2-\kappa}_{p,\theta}(I)}\\
&+ N  \| \xi u ( \boldsymbol{\eta}-\boldsymbol{\eta}_m)\|^p_{\mathbb{H}_{p,\theta}^{-1/2-\kappa}(I,T,l_2)},
\end{aligned}
\end{equation}
where $N$ is independent of $m$ and $u$. \\
The calculations in  \eqref{eqn 12.5.1} and \eqref{computation_about_l2} certainly show (recall $\xi$ is bounded) that 
$\| \xi(t) u(t) ( \boldsymbol{\eta}-\boldsymbol{\eta}_m)\|^p_{H_{p,\theta}^{-1/2-\kappa}(I,l_2)}$ less than or equal to
$$
N \sum_{n=-\infty}^{\infty} e^{n(\theta-p/2)} \int_{\bR} \left( \sum_{k>m} \left[ \int_{\bR}R_{\kappa}(x-e^{-n}y)\zeta_{-n}(y) u(t,y) \eta_k(y) dy   \right]^2 \right)^{p/2} dx,
$$
 and by calculations in \eqref{during_proof_1}, the term above is bounded by $N\|u(t)\|^p_{H^{1/2-\kappa}_{p,\theta-p}(I)}$, uniformly in $m$. By the dominated convergence theorem,
 $$\| \xi(t) u(t) ( \boldsymbol{\eta}-\boldsymbol{\eta}_m)\|^p_{H_{p,\theta}^{-1/2-\kappa}(I,l_2)} \to 0$$
as $m\to \infty$ for almost all $(\omega,t)$, and  therefore using the dominated convergence theorem again we get  
 $$\| \xi u ( \boldsymbol{\eta}-\boldsymbol{\eta}_m)\|^p_{\mathbb{H}_{p,\theta}^{-1/2-\kappa}(I,T,l_2)} \to 0 \quad \text{as}\quad m\to \infty.
 $$ 
 
 Next, we show $u_m \geq 0$. Recall $u^m_0\in U^1_{p,\theta}(I)$. Since $u_m \in \bH^{1/2-\kappa}_{p,\theta-p}(I,T)\subset \bL_{p,\theta}(I,T)$, $\xi$ and $\eta_k$ are bounded and $g^k_m(u_m):=\xi \eta_k u_m 1_{k\leq m}$ is zero for $k>m$, we easily conclude 
 $$
 g_m(u_m)\in \bL_{p,\theta}(I,T,l_2),
 $$
  and therefore by   Theorem \ref{general_g_linear}  there is a unique solution $\bar{u}_m \in \mathfrak{H}_{p,\theta}^{1}(I,T)$  to the equation
  $$
  dv=(av_{xx}+bv_x+cv)dt+ \sum_{k=1}^{\infty} g^k_m(u_m)dw^k_t, \,  0<t\leq T\, ; \quad v(0,\cdot)=u^m_0.
  $$
  Consequently,  both $u_m$ and $\bar{u}_m$ are solutions to the above equations. Since $\mathfrak{H}_{p,\theta}^{1}(I,T) \subset \mathfrak{H}_{p,\theta}^{1/2-\kappa}(I,T)$,  the uniqueness result in $ \mathfrak{H}_{p,\theta}^{1/2-\kappa}(I,T)$   yields $u_m=\bar{u}_m$.
  Finally, to conclude $u_m \geq 0$, note that $u_m \in \mathfrak{H}_{p,\theta}^{1}(I,T)$ and it satisfies
  $$
  dv=(av_{xx}+bv_x+cv)dt+\sum_{k\leq m} \nu^k v \, dw^k_t, \, t>0 \, ; \quad v(0,\cdot)=u^m_0 \geq 0,
  $$
  where $\nu^k:=\xi \eta_k$ are bounded. Hence, by a classical maximum principle (see e.g.\cite[Theorem 1.1]{krylov2007maximum})  we conclude $u_m \geq 0$.
  
    We  remark that, since  $u_m \in \mathfrak{H}_{p,\theta}^{1}(I,T)$,   both $\phi u_m$ and $\phi D_xu_m$ are square integrable for any 
    $\phi \in C^{\infty}_c(I)$, and this is needed to apply \cite[Theorem 1.1]{krylov2007maximum}.
  The theorem is proved.
\end{proof}

\section{The case \texorpdfstring{$\lambda\in (0,1/2)$}{Lg}}

In this section we prove Theorem \ref{theorem_general_lambda} when $\lambda\in (0,1/2)$.
For reader's convenience we recall Assumption \ref{ass coeff} below.

\begin{assumption}[$\tau$]\label{ass coeff-1}
(i) The coefficients $a(t,x)$, $b(t,x)$, $c(t,x)$ are $\mathcal{P}\times\mathcal{B}(I)$-measurable functions depending on $(\omega,t,x)$.\\
(ii) There exists    constants  $\delta_0, K >0 $ such that
$$
a(t,x)\geq \delta_0, \quad \forall \, \omega,t,x.
$$
and
\begin{equation*}
\label{eqn 12.7.3}
 | a(t,\cdot)|_{C^2(I)}+| b(t,\cdot)|_{C^2(I)}+|c(t,\cdot)|_{C^2(I)} <K, \quad \forall \, \omega,t.
\end{equation*}
(iii)  $\xi(t,x)$ is  $\mathcal{P}\times\mathcal{B}(I)$-measurable and there exist a constant $K$ such that 
\begin{equation} 
      \label{eqn 12.10.8}
\| \xi(t,\cdot) \|_\infty\leq K,  \quad  \quad \forall\,\, \omega\in \Omega, \, t\leq \tau.
\end{equation}

\end{assumption}

\begin{lemma} \label{cut_off_lemma}
Choose $\lambda,\kappa,p,$ and $\theta$ so that
\begin{equation*}
\lambda>0,\quad\kappa\in(0,1/2),\quad p\geq2,\quad \mbox{and} \quad0<\theta<p.
\end{equation*}
Let  $\tau\leq T$ and Assumption \ref{ass coeff-1}($\tau$) hold. Assume $u_0\in U_{p,\theta}^{1/2-\kappa}(I)$ and $u_0\geq0$.
Then, for any positive integer $m$,  the equation
\begin{equation*} \label{eqn 12.7.1}
\begin{aligned}
du=  (au_{xx}+bu_{x}&+cu)dt \\
&+\sum_{k=1}^{\infty} \xi |(-m) \vee u\wedge m|^{1+\lambda}\eta_k dw_t^k,\,\, 0<t\leq \tau;\quad u(0,\cdot)=u_0
\end{aligned}
\end{equation*}
has a unique solution $u_m \in \mathfrak{H}_{p,\theta}^{1/2-\kappa}(I,\tau)$ and $u_m\geq 0$. Moreover, if \eqref{eqn 12.10.8} holds for all $t\in [0,\infty)$, then  $u_m\in\mathfrak{H}_{p,\theta}^{1/2-\kappa}(I,T')$ for any $T'<\infty$.

\end{lemma}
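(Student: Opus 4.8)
The plan is to reduce the whole statement to the $\lambda=0$ theory of Theorem~\ref{theorem_lambda_0}, exploiting that the \emph{truncated} nonlinearity is bounded and globally Lipschitz. Write $\phi_m(r):=(-m)\vee r\wedge m$ and $h_m(r):=|\phi_m(r)|^{1+\lambda}$, so that the equation is of the form \eqref{equation_lambda_0} with $h=h_m$. First I would verify Assumption~\ref{assumption_for_h}. Since $\phi_m$ is $1$-Lipschitz and $r\mapsto|r|^{1+\lambda}$ has derivative bounded by $(1+\lambda)m^{\lambda}$ on $[-m,m]$, the composition $h_m$ is globally Lipschitz with constant $K'=(1+\lambda)m^{\lambda}$, which gives \eqref{eq_assumption_for_h}; moreover $h_m(0)=0$, so $g(0)\equiv0$ lies trivially in the required space. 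For $\xi$, Assumption~\ref{ass coeff-1}(iii) provides $\|\xi(t,\cdot)\|_{\infty}\le K$ for $t\le\tau$, so Assumption~\ref{assumption_for_h}(ii) holds with $s=\infty$ and $K''=K$; then the constraints $\frac{1}{2\kappa}<s\le\infty$ and $p\ge\frac{2s}{s-1}$ collapse to $p\ge2$, which is assumed. Theorem~\ref{theorem_lambda_0} therefore delivers the unique solution $u_m\in\mathfrak{H}^{1/2-\kappa}_{p,\theta}(I,\tau)$.

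The nonnegativity is the heart of the matter, and I would get it by linearizing the equation along its own solution $u_m$ and then applying the maximum principle, Theorem~\ref{thm maximum}. Set
\begin{equation*}
\bar{h}_m(r):=\begin{cases} h_m(r)/r, & r\neq0,\\ 0, & r=0,\end{cases}
\qquad \bar{\xi}_m:=\xi\,\bar{h}_m(u_m),
\end{equation*}
so that $\xi\,h_m(u_m)=\bar{\xi}_m\,u_m$ holds identically. The decisive elementary bound is $|\bar{h}_m(r)|\le m^{\lambda}$ for every $r$: for $|r|\le m$ one has $|\bar{h}_m(r)|=|r|^{\lambda}\le m^{\lambda}$, while for $|r|>m$ one has $|\bar{h}_m(r)|=m^{1+\lambda}/|r|<m^{\lambda}$. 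Hence $\bar{\xi}_m$ is predictable and $\sup_{\omega}\sup_{t\le\tau}\|\bar{\xi}_m(t,\cdot)\|_{\infty}\le Km^{\lambda}<\infty$. Thus $u_m$ is a solution, and by Theorem~\ref{theorem_lambda_0} (applied with $h(u)=u$ and coefficient $\bar{\xi}_m$) the unique solution, of the linear equation $du=(au_{xx}+bu_x+cu)\,dt+\sum_{k}\bar{\xi}_m\,u\,\eta_k\,dw^k_t$ with $u(0,\cdot)=u_0\ge0$. Theorem~\ref{thm maximum} then yields $u_m(t,\cdot)\ge0$ for all $t\le\tau$ (a.s.).

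For the global-in-time assertion I would simply rerun the argument with $\tau$ replaced by an arbitrary $T'<\infty$. The only hypothesis imposed solely up to $\tau$ is the bound \eqref{eqn 12.10.8} on $\xi$; if it holds on all of $[0,\infty)$, then on each $[0,T']$ the nonlinearity $h_m$ is still bounded and globally Lipschitz and $\|\xi\|_{\infty}\le K$, so Theorem~\ref{theorem_lambda_0} with $\tau=T'$ produces $u_m\in\mathfrak{H}^{1/2-\kappa}_{p,\theta}(I,T')$ for every $T'$.

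I expect the main obstacle to be the nonnegativity step. One must confirm that the formal linearization $h_m(u_m)=\bar{h}_m(u_m)\,u_m$ carries a genuinely \emph{bounded} coefficient --- this is exactly where truncation at level $m$ (controlling large values) and the exponent $1+\lambda\ge1$ (controlling small values, so that $\bar{h}_m$ stays bounded near $0$) work together --- and that $u_m$ is really the unique $\mathfrak{H}$-solution of the resulting linear equation, so that Theorem~\ref{thm maximum} applies verbatim.
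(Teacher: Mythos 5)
Your proposal is correct and follows essentially the same route as the paper's proof: truncation makes the nonlinearity globally Lipschitz so that Theorem \ref{theorem_lambda_0} with $s=\infty$ gives existence and uniqueness, nonnegativity is obtained by rewriting $\xi h_m(u_m)=\bar{\xi}_m u_m$ with $\|\bar{\xi}_m\|_\infty\le Km^{\lambda}$ and invoking the maximum principle (Theorem \ref{thm maximum}), and the extension to $[0,T']$ is the same rerun-with-$\tau=T'$ argument combined with uniqueness on $[0,\tau]$. (The paper states the cruder Lipschitz constant $(1+\lambda)(2m)^{\lambda}$; your $(1+\lambda)m^{\lambda}$ is equally valid.)
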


\begin{proof}

By mean-value theorem,
\begin{equation} \label{nonlinear}
|(-m)\vee u\wedge m|^{1+ \lambda} - |(-m)\vee v\wedge m|^{1+ \lambda} \leq (1+\lambda)(2m)^\lambda |u-v|.
\end{equation}

Thus, Assumption \ref{assumption_for_h}($\tau,s$) is satisfied with $s=\infty$. Consequently, by Theorem \ref{theorem_lambda_0}, the equation in the lemma has a unique solution $u_m\in\mathfrak{H}_{p,\theta}^{1/2-\kappa}(I,\tau)$.

 To show $u_m \geq 0$, we define
\begin{equation}
   \label{eqn 12.9.7_1}
\xi_m(t,x) = \frac{\xi(t,x) |(-m)\vee u_m(t,x)\wedge m|^{1+\lambda}}{u_m(t,x)}\quad \left(\frac{0}{0}:=0\right).
\end{equation}
Note that $\xi_m$ is a bounded, that is $\sup_{\omega}\sup_{t\leq \tau} \|\xi_m\|_{\infty}<\infty$. Also note that   $u_m$  satisfies
\begin{equation*}
\begin{aligned}
dv  = (av_{xx}+bv_{x}+cv)dt +\sum_{k=1}^{\infty} \xi_mv\eta_k dw_t^k,\quad t\leq \tau\, ;\quad v(0,\cdot)=u_0\geq 0.
\end{aligned}
\end{equation*}
Hence, by Theorem \ref{thm maximum} (maximum principle) we conclude  $u_m \geq 0$. 

Finally, assume that \eqref{eqn 12.10.8} holds for all $t$ and let $T'>T$.  Then by the previous result applied with $\tau=T'$, there exists a unique solution for $t\leq T'$. This obviously coincide with $u$ for $t\leq \tau$ due to the uniqueness result in $\mathfrak{H}_{p,\theta}^{1/2-\kappa}(I,\tau)$. The lemma is proved.

\end{proof}

\begin{remark}
    \label{remark 12.9.1}
Let \eqref{eqn 12.10.8} hold for all $t$, and $u_m$ be taken from the above Lemma.  Then, for any $\phi \in C^{\infty}_c(I)$
$$
M_t:=\sum_{k=1}^{\infty} \int^t_0 (\xi |u_m \wedge m|^{1+\lambda} \eta_k, \phi )dw^k_t, \quad t<\infty
$$
is a square integrable martingale, because
$$
\bE \sum_{k=1}^{\infty} \int^t_0 \left(\xi |u_m \wedge m|^{1+\lambda} \eta_k, \phi  \right)^2 ds \leq N \bE \sum_{k=1}^{\infty} \int^t_0 ( \eta_k, \phi )^2ds\leq N  \|\phi\|^2_{L_2(I)}<\infty.
 $$
\end{remark}

\begin{lemma} \label{weight_lemma}
Suppose Assumption \ref{ass coeff-1}($\tau$)(ii) holds, that is, 
$$
a\geq \delta_0, \quad  | a(t,\cdot)|_{C^2(I)}+| b(t,\cdot)|_{C^2(I)}+|c(t,\cdot)|_{C^2(I)} <K.
 $$
 Let $K_1 =3 K/(2\delta_0)$ and 
\begin{equation} \label{weight}
\psi(x) = -\cosh(K_1(2x-1))+ \cosh(K_1).
\end{equation}
Then $\psi(x)$ is comparable to $\rho(x)$, infinitely differentiable in $I$, concave in $I$, and 
\begin{equation} \label{condi_for_gen}
a(t,x)\psi''(x)+(2a_x(t,x)-b(t,x))\psi'(x)\leq 0.
\end{equation}
\end{lemma}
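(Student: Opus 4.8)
The plan is to establish the four asserted properties by direct computation, since the explicit form of $\psi$ and the precise choice $K_1=3K/(2\delta_0)$ are tailored so that \eqref{condi_for_gen} holds with no slack. First I would record the derivatives
\begin{equation*}
\psi'(x) = -2K_1\sinh(K_1(2x-1)), \qquad \psi''(x) = -4K_1^2\cosh(K_1(2x-1)).
\end{equation*}
Infinite differentiability on $I$ is immediate since $\cosh$ is entire, and concavity follows at once from $\psi''(x) = -4K_1^2\cosh(K_1(2x-1)) < 0$ for all $x$.

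For the comparability with $\rho$ I would invoke the criterion recorded below \eqref{eqn psi}: it suffices to check $\psi(0)=\psi(1)=0$, $\psi>0$ on $I$, $\psi'(0)>0$, and $\psi'(1)<0$. Since $K_1(2x-1)$ equals $-K_1$ at $x=0$ and $K_1$ at $x=1$, and $\cosh$ is even, both endpoint values vanish; for $x\in I$ we have $|K_1(2x-1)|<K_1$, so $\cosh(K_1(2x-1))<\cosh(K_1)$ and hence $\psi>0$. Finally $\psi'(0)=2K_1\sinh(K_1)>0$ and $\psi'(1)=-2K_1\sinh(K_1)<0$, as $K_1>0$.

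The only real content is \eqref{condi_for_gen}. Substituting the derivatives and factoring out the negative constant $-2K_1$,
\begin{equation*}
a\psi''+(2a_x-b)\psi' = -2K_1\Big[\,2aK_1\cosh(K_1(2x-1))+(2a_x-b)\sinh(K_1(2x-1))\,\Big],
\end{equation*}
so it is enough to show the bracket is nonnegative. Using the elementary bound $\cosh t\geq|\sinh t|$ together with $a\geq\delta_0>0$ and $|2a_x-b|\leq 2|a_x|+|b|\leq 3K$ (from Assumption \ref{ass coeff-1}(ii)), the bracket is at least $(2\delta_0 K_1-3K)\cosh(K_1(2x-1))$. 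The choice $K_1=3K/(2\delta_0)$ makes $2\delta_0 K_1-3K=0$, whence the bracket is $\geq0$ and \eqref{condi_for_gen} follows.

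There is no genuine obstacle here; the lemma is essentially a verification in which the constant $K_1$ has been chosen precisely to annihilate the leading term. The one point to get right is the sign bookkeeping after pulling out $-2K_1<0$, and the controlled use of $\cosh\geq|\sinh|$ to absorb the $\sinh$ term against the strictly positive $\cosh$ term.
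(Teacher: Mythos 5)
Your proof is correct and follows essentially the same route as the paper: verify the sign conditions for comparability with $\rho$, note $\psi''<0$ for concavity, and then use $a\geq\delta_0$, $|2a_x-b|\leq 3K$, and $\cosh t\geq|\sinh t|$ so that the choice $K_1=3K/(2\delta_0)$ exactly cancels the leading term in \eqref{condi_for_gen}. The only cosmetic difference is that you factor out $-2K_1$ before estimating, while the paper bounds $a\psi''+(2a_x-b)\psi'$ directly.
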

\begin{proof}
It is easy to check that 
$\psi(0)=\psi(1)=0,  \psi'(0)>0,  \psi'(1)<0$, $\psi(x)>0$ in $I$, and $\psi''(x)<0$ in $I$.  Therefore, it is sufficient to show \eqref{condi_for_gen}. Observe that
\begin{equation*}
\begin{aligned}
&a(t,x)\psi''(x)+(2a_x(t,x)-b(t,x))\psi'(x) \\
&\quad \leq \delta_0\psi''(x)+|(2a_x(t,x)-b(t,x))\psi'(x)| \\
&\quad \leq -4K_1^2\delta_0\cosh(K_1(2x-1)) + 6KK_1|\sinh(K_1(2x-1))| \\
&\quad = -6K_1K \Big(\cosh(K_1(2x-1))-|\sinh(K_1(2x-1))| \Big) \leq 0.
\end{aligned}
\end{equation*}
The lemma is proved.
\end{proof}

\begin{lemma} \label{key_lemma}
Let  \eqref{eqn 12.10.8} hold for all $t$, and $u_m$ be the solution  taken from Lemma \ref{cut_off_lemma}.  Also assume \eqref{condition_of_constant_1} holds. Then, for any $T<\infty$,
\begin{equation} \label{lemma}
\lim_{R\to\infty}\sup_{m}P(\{ \omega:\sup_{t\leq T,x\in I}| \psi^{-1/2-\kappa-\frac{1}{p}+\frac{\theta}{p}}(x)u_m(t,x) | \geq R \}) = 0.
\end{equation}
\end{lemma}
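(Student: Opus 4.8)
The plan is to localize with stopping times adapted to the weighted supremum and to reduce the tightness statement to a uniform-in-$m$ a priori bound obtained from the linear theory. For $R>0$ I would set
$$
\tau^m_R := \inf\{t\ge 0 : \sup_{x\in I}|\psi^{-1/2-\kappa-1/p+\theta/p}(x)u_m(t,x)|\ge R\}\wedge T .
$$
Since $u_m\in\mathfrak{H}^{1/2-\kappa}_{p,\theta}(I,T')$ for every $T'$ by Lemma \ref{cut_off_lemma}, Corollary \ref{embedding_corollary} with $\delta=0$ shows that $\psi^{-1/2-\kappa-1/p+\theta/p}u_m$ admits a continuous version on $[0,T]\times I$; hence $\tau^m_R$ is a genuine stopping time and, by continuity of the weighted solution, the event in \eqref{lemma} coincides up to a null set with $\{\tau^m_R<T\}$. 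Thus it suffices to bound $\mathbb{E}\,|\psi^{-1/2-\kappa-1/p+\theta/p}u_m|^p_{C([0,\tau^m_R];C(I))}$ uniformly in $m$ and then apply Chebyshev's inequality; by \eqref{embedding_remark} this quantity is dominated by $\|u_m\|^p_{\mathfrak{H}^{1/2-\kappa}_{p,\theta}(I,\tau^m_R)}$, so everything reduces to estimating the latter.

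I would then run the linear estimate of Theorem \ref{general_g_linear} on the stochastic interval $[0,\tau^m_R]$, treating $g(u_m)=\xi|u_m|^{1+\lambda}\boldsymbol{\eta}$ as a given free term. The constant there depends only on $p,\theta,\kappa,K,\delta_0,T$ and is therefore independent of both $m$ and $R$, yielding
$$
\|u_m\|_{\mathfrak{H}^{1/2-\kappa}_{p,\theta}(I,\tau^m_R)}\le N\big(\,\|\xi|u_m|^{1+\lambda}\boldsymbol{\eta}\|_{\mathbb{H}^{-1/2-\kappa}_{p,\theta}(I,\tau^m_R,l_2)}+\|u_0\|_{U^{1/2-\kappa}_{p,\theta}(I)}\,\big).
$$
To bound the stochastic term I would reuse the kernel computation behind \eqref{computation_about_l2}--\eqref{during_proof_1}, but now factor $|u_m|^{1+\lambda}=|u_m|^{\lambda}\,|u_m|$, regard $\xi|u_m|^{\lambda}$ as the coefficient measured in $L_{2s,\theta'}(I)$ and $u_m$ as the linear factor. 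The finite-$s$ form of \eqref{during_proof_1} then gives, for $s$ with $(2\kappa)^{-1}<s$ and $\theta'<s$,
$$
\|\xi|u_m|^{1+\lambda}\boldsymbol{\eta}\|^p_{H^{-1/2-\kappa}_{p,\theta}(I,l_2)}\le N\,\big\||u_m|^{\lambda}\big\|^p_{L_{2s,\theta'}(I)}\,\|u_m\|^p_{L_{p,\theta-(1+\theta'/s)p/2}(I)} ,
$$
where the hypothesis $\theta'<s$ is exactly what places the resulting weight strictly between $\theta-p$ and $\theta$, so that Corollary \ref{interpol_cor}(i) applies and the second factor can be split into an absorbable $H^{1/2-\kappa}_{p,\theta-p}$-part and a lower-order $H^{-1/2-\kappa}_{p,\theta}$-part. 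On $[0,\tau^m_R]$ the stopping rule yields the pointwise bound $|u_m(t,x)|\le R\,\psi^{1/2+\kappa+1/p-\theta/p}(x)$, whence $\||u_m|^{\lambda}\|_{L_{2s,\theta'}(I)}\le N R^{\lambda}$, the finiteness of the corresponding $\psi$-integral being guaranteed by $0<\theta\le 1+p(\tfrac12+\kappa)$ in \eqref{condition_of_constant_1}. Feeding the low-order part through \eqref{eqn 11.27.3} converts it into a time integral of $\|u_m\|_{\mathfrak{H}}$, producing a Gronwall-type inequality for $\|u_m\|_{\mathfrak{H}^{1/2-\kappa}_{p,\theta}(I,\tau^m_R\wedge t)}$ driven by $\|u_0\|_{U^{1/2-\kappa}_{p,\theta}}$.

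The main obstacle, which I expect to be the delicate heart of the argument, is the control of the resulting $R$-dependence: the factor $R^{\lambda}$ reflects the superlinearity of the noise coefficient, and a crude treatment produces only a bound growing in $R$, which is worthless after dividing by $R^p$ in Chebyshev's inequality. Overcoming this is exactly where the second condition in \eqref{condition_of_constant_1}, namely $p>2\lambda\theta/(1-2\lambda)$ (the subcriticality $\lambda<1/2$), must be used in full strength: it is the precise balance between the decay exponent $\tfrac12+\kappa+\tfrac1p-\tfrac{\theta}{p}$ of the weight, the integrability lost in the $L_{2s,\theta'}$ estimate of $|u_m|^{\lambda}$, and the interpolation gain that is meant to force the net power of $R$ in the final estimate to be strictly negative. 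I expect that a purely supremum-based localization is in fact too lossy here, since it ignores that large values of $u_m$ are spatially concentrated; the careful bookkeeping of the weighted $L_{p}$/$L_{2s}$ exponents above is what should recover the missing cancellation. Once this quantitative balance is established, $\sup_m P(\tau^m_R<T)\to0$ as $R\to\infty$ follows immediately, and the remaining steps are routine applications of the embeddings and interpolation inequalities already recorded.
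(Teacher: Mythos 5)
There is a genuine gap, and you essentially flag it yourself in your last paragraph: the supremum-based localization $\tau^m_R$ gives only $\||u_m|^{\lambda}\|_{L_{2s,\theta'}}\le NR^{\lambda}$ on $[0,\tau^m_R]$, so the constant in the resulting a priori estimate (which enters exponentially through Gronwall, via the $N_\varepsilon$ in Assumption \ref{assumption_for_g} and the iteration in Theorem \ref{general_for_g}) blows up in $R$, and Chebyshev's inequality then yields nothing as $R\to\infty$. No amount of bookkeeping of the exponents repairs this, because the quantity you are localizing (the weighted sup) is exactly the quantity you are trying to bound; the condition $p>2\lambda\theta/(1-2\lambda)$ does not by itself produce the "missing cancellation" you hope for within this scheme. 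The missing idea is a \emph{separate, uniform-in-$m$ a priori bound on a weaker norm}, which the paper obtains as follows. One fixes $\alpha$ with $\theta/p<\alpha<1/(2\lambda)-1$ (this interval is nonempty precisely because $p>2\lambda\theta/(1-2\lambda)$ --- that is where this condition is used, not in the kernel estimates), takes the special concave weight $\psi$ of Lemma \ref{weight_lemma} satisfying $a\psi''+(2a_x-b)\psi'\le 0$, and tests the equation against $\psi^{\alpha}\phi(k\psi)$. After discarding the signed drift terms and estimating the cutoff errors by $NM_mk^{\theta/p-\alpha}\to 0$ as $k\to\infty$, one gets a supermartingale-type bound $\mathbb{E}\sup_{t\le T}\|\psi^{\alpha}u_m(t)\|_{L_1(I)}^{1/2}\le N$ with $N$ independent of $m$ (here nonnegativity of $u_m$, from Lemma \ref{cut_off_lemma}, is essential).

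With that $L_1$ bound in hand, the correct localization is by the level $S$ of $\|\psi^{\alpha}u_m(t)\|_{L_1(I)}$, not by the level $R$ of the weighted supremum. Writing the equation linearly with coefficient $\xi_m=\xi|u_m\wedge m|^{1+\lambda}/u_m$ and choosing $s=1/(2\lambda)$, $\theta'=\alpha+1<s$, one has $\|\xi_m(t,\cdot)\|^{2s}_{L_{2s,\theta'}(I)}\le \|\xi\|_\infty\int_I|u_m|^{2\lambda s}\psi^{\alpha}\,dx = \|\xi\|_\infty\|\psi^{\alpha}u_m(t)\|_{L_1(I)}\le KS$ up to the stopping time $\tau_m(S)$ --- the exponent $2\lambda s=1$ is exactly what converts the nonlinearity into the already-controlled $L_1$ quantity. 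Theorem \ref{theorem_lambda_0} then gives $\|u_m\|_{\mathfrak{H}^{1/2-\kappa}_{p,\theta}(I,T\wedge\tau_m(S))}\le N_S\|u_0\|_{U^{1/2-\kappa}_{p,\theta}(I)}$ uniformly in $m$, and the conclusion follows by Chebyshev together with $P(\tau_m(S)<T)\le N/\sqrt{S}$, letting $R\to\infty$ and then $S\to\infty$. Your reduction of \eqref{lemma} to a uniform bound and your use of Corollary \ref{embedding_corollary} are fine; what is missing is the weighted-$L_1$ supermartingale estimate and the change of localizing functional, which constitute the actual content of the lemma.
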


\begin{proof}
Without loss of generality, we use $\psi$  introduced in Lemma \ref{weight_lemma}. Fix $\alpha\in(0,1)$ so that
\begin{equation}
    \label{eqn 12.9.6}
\frac{\theta}{p}<\alpha<-1+\frac{1}{2\lambda}.
\end{equation}
Take $\phi\in C^\infty(\mathbb{R})$ such that $\phi = 0$ on $x<1/2$, $\phi = 1$ on $x>1$ and $\phi'(x)\geq0$. Define $\phi_k(x) = \phi(k\psi(x))$. Note that  $\phi_k$ are infinitely differentiable in $I$, $\phi_k(x) \to 1$ in $I$ as $k\to\infty$.  
Since $\alpha-1<0$ and $\psi''(x)\leq0$, we have
\begin{equation} \label{diff_twice}
\begin{aligned}
(\psi^\alpha &\phi(k\psi))'' = \alpha(\alpha-1)\psi^{\alpha-2}|\psi'|^2\phi_k + \alpha\psi^{\alpha-1}\psi''\phi_k\\
&+ 2\alpha\psi^{\alpha-1}\psi'\phi'(k\psi)k\psi'+\psi^\alpha\phi''(k\psi)(k\psi')^2
+\psi^\alpha\phi'(k\psi)k\psi'' \\
&\leq \alpha \psi^{\alpha-1}\psi''\phi_k +2\alpha k\psi^{\alpha-1}|\psi'|^2\phi'(k\psi)
  +k^2 \psi^\alpha |\psi'|^2 \phi''(k\psi).
\end{aligned}
\end{equation}
Recall $\sup_x(|a_{xx}|+|b_x|+|c|) \leq 3K$.   By It\^o's formula,  $v:=u^m e^{-3Kt}$ satisfies
\begin{equation*}
\begin{aligned}
dv=(av_{xx}+bv_x&+cv-3Kv)dt \\
&+\sum_{k=1}^{\infty} \xi e^{-3Kt} |u_m \wedge m|^{1+\lambda} \eta_k dw^k_t, \, t>0\, ; \quad v(0,\cdot)=u_0.
\end{aligned}
\end{equation*}
Multiplying by  $\psi^{\alpha} \phi_k$ to the above  equation (actually using $\psi^{\alpha}\phi_k$ as a test function), taking expectation (also see Remark \ref{remark 12.9.1}),  integrating by parts,  and then using \eqref{diff_twice} we get  for any  stopping time $\tau\leq T$, 

\begin{eqnarray*}
&&\mathbb{E}(e^{-3T K}u_m(\tau),\psi^{\alpha}\phi_k)\leq\,\mathbb{E}(e^{-3\tau K}u_m(\tau),\psi^{\alpha}\phi_k) \\
&=& \mathbb{E}(u_0, \psi^{\alpha}\phi_k) + \mathbb{E}\int_0^\tau \Big(u_m, ( a \psi^{\alpha} \phi_k)''-(b \psi^\alpha \phi_k)'+cu -3Ku \Big)e^{-3Kt} dt \\
&=& \mathbb{E}(u_0, \psi^{\alpha}\phi_k) + \mathbb{E}\int_0^\tau \Big( u_m, \, a(\psi^\alpha\phi_k)''+(2a_x-b)(\psi^\alpha\phi_k)'\\
&&\quad\quad\quad\quad\quad\quad\quad \quad \quad \quad \quad +(a_{xx}-b_x+c-3K)(\psi^\alpha\phi_k) \Big)e^{-3Ks}ds \\
&\leq& \bE(u_0, \psi^{\alpha}) + \mathbb{E}\int_0^\tau \int_{I} \bigg(\left[a\psi''+(2a_x-b)\psi' \right]\alpha \psi^{\alpha-1}\phi_k 
   \\
&&\quad\quad\quad\quad\quad\quad\quad\quad\quad\quad\quad\quad + 2a\alpha k \psi^{\alpha-1}|\psi'|^2\phi_k' + ak^2\psi^\alpha |\psi'|^2\phi_k'' \\
&&\quad\quad\quad\quad\quad\quad\quad\quad\quad\quad\quad\quad\quad + (2a_x-b)k\psi^\alpha\psi'\phi_k'\bigg)e^{-3Ks}u_m\,dxds.
\end{eqnarray*}
Now we use $a\psi''+(2a_x-b)\psi' \leq 0$, and
$$
|2a\alpha \psi^{\alpha-1}|\psi'|^2 \phi_k'|+ |(2a_x-b)\psi^{\alpha} \psi' \phi_k'|\leq N \psi^{\alpha-1}|\psi'| |\phi_k'|,
$$
and conclude for any $\tau\leq T$,
\begin{align*}
e^{-3KT} \bE (u_m(\tau,\cdot), \psi^{\alpha}\phi_k)&\leq N_0 +N k \bE\int^{\tau}_0 \int_I \psi^{\alpha-1}|\psi' | |\phi'(k\psi)|u_m dx dt \\
&\quad+Nk^2  \bE\int^{\tau}_0 \int_I \psi^{\alpha}|\psi'|^2 |\phi''(k\psi)|u_m dx dt.
\end{align*}
%
%
 Choose $q,\theta'\in\mathbb{R}$ so that
\begin{equation*}
\frac{1}{p}+\frac{1}{q} = 1,\quad\frac{\theta}{p}+\frac{\theta'}{q}=1.
\end{equation*}
By H\"older's inequality and the fact $|\psi'(x)|^q\leq N|\psi'(x)|$ for all $x\in I$, we get
\begin{equation*}
\begin{aligned}
&k\mathbb{E}\int_0^\tau \int_{I}  u_m\  \psi^{\alpha-1}|\psi'| |\phi'(k\psi)| dx ds  \\
&\quad=k\mathbb{E}\int_0^\tau \int_{I} ( \psi^{-1} u_m)    (\psi^{\alpha}|\psi'| \phi'(k\psi))   \psi^{\frac{\theta-1}{p}+\frac{\theta'-1}{q}} dx ds  \\
&\quad\leq Nk\| \psi^{-1}u_m \|_{\mathbb{L}_{p,\theta}(I,T)}\left( \int_I  |   \psi^{\alpha}\phi'(k\psi) |^q|\psi'|^{q} \psi^{\theta'-1} dx\right)^{1/q} \\
&\quad\leq N k^{1-\alpha-\frac{\theta'}{q}}M_m\left(\int_{I}  |  (k\psi)^\alpha \phi'(k\psi) |^q  (k\psi)^{\theta'-1}|k\psi'|dx\right)^{1/q}
 \\
&\quad\leq NM_m k^{\frac{\theta}{p}-\alpha}\left( \int_{\mathbb{R}_+}  
| \phi'(y)|^q y^{\alpha q+\theta'-1} dy \right)^{1/q} \\
&\quad \leq N M_m k^{\frac{\theta}{p}-\alpha},
\end{aligned}
\end{equation*}
where $M_m:= \| \psi^{-1}u_m \|_{\mathbb{L}_{p,\theta}(I,T)}$. Similarly,
\begin{equation*} 
\begin{aligned}
&k^2\mathbb{E}\int_0^\tau\int_{I}  |u_m  \psi^\alpha \phi''(k\psi)||\psi'|^2  dxds \\
&\quad=k^2\mathbb{E}\int_0^\tau\int_{I}  |\psi^{-1}u_m||\psi^{\alpha+1} \phi''(k\psi)||\psi'|^2  \psi^{\frac{\theta-1}{p}+\frac{\theta'-1}{q}}dxds \\
&\quad\leq Nk^2 \| \psi^{-1}u_m \|_{\mathbb{L}_{p,\theta}(I,T)} \left( \int_I (| \psi^{\alpha+1} \phi''(k\psi)||\psi'|^2 )^q \psi^{\theta'-1} dx \right)^{1/q} \\
&\quad\leq N M_m k^{1-\alpha-\frac{\theta'}{q}} \left(\int_{I}  | (k\psi)^{\alpha+1} \phi''(k\psi) |^q (k\psi)^{\theta'-1}|k\psi'| dx \right)^{1/q} \\
&\quad\leq NM_m k^{\frac{\theta}{p}-\alpha} \left( \int_{I}  |\phi''(y)|^q  y^{(\alpha+1)q+\theta'-1}dy \right)^{1/q}\\
&\quad \leq N M_m k^{\frac{\theta}{p}-\alpha}.
\end{aligned}
\end{equation*}
Note that the constant $N$ is independent of $m, k$. 
Therefore, for any stopping time $\tau\leq T$,
\begin{equation*}
\mathbb{E}(u_m(\tau),\psi^{\alpha}\phi_k) \leq N_0e^{3KT}+ N M_me^{3KT} k^{\frac{\theta}{p}-\alpha}.
\end{equation*}
By e.g. \cite[Theorem III.6.8]{diffusion},  for any $\gamma\in(0,1)$,
\begin{equation*}
\mathbb{E}\sup_{t\leq T}(u_m(t),\psi^{\alpha}\phi_k))^\gamma \leq e^{3KT\gamma}(N_0 + NM_m k^{\frac{\theta}{p}-\alpha})^{\gamma},
\end{equation*}
where $N_0, N$ are independent of $m,k$. 
Since $\alpha>\theta/p$, by monotone convergence theorem,
$$ \mathbb{E}\sup_{t\leq T}\| \psi^\alpha u_m(t) \|_{L_1(I)}^{1/2} \leq e^{2KT} N_0^{1/2}=:N^{1/2}_1.
$$
Thus,
\begin{equation} \label{stopping_time}
P(\sup_{t\leq T}\| \psi^\alpha(\cdot) u_m(t,\cdot) \|_{L_1(I)}\geq S) \leq N_1^{1/2}/\sqrt{S}.
\end{equation}
Recall that the $N_0$ is independent of $m$ and $S>0$.\\
Fix $m,S>0$. Define
$$ \tau_m(S) :=\inf\{ t\geq 0:\| \psi^\alpha(\cdot)u_m(t,\cdot) \|_{L_1(I)}\geq S \}.
$$
$\tau_m(S)$ becomes a stopping time due to Corollary \ref{embedding_corollary}. Put $s=1/(2\lambda)$ and $\theta'=\alpha+1$. Take $\xi_m$ from \eqref{eqn 12.9.7_1}, that is, since $u_m \geq 0$,
$$
\xi_m(t,x) = \frac{\xi(t,x) | u_m(t,x)\wedge m|^{1+\lambda}}{u_m(t,x)}\quad \left(\frac{0}{0}:=0\right).
$$
 Note that by \eqref{eqn 12.9.6},  $\theta'<s$, and for $t\leq \tau_m(S)$,
$$
\|\xi_m(t,\cdot)\|^{2s}_{2s,\theta'}\leq \int_{I} |\xi|^{2s} (|u_m(t)\wedge m|^{\lambda})^{2s} \psi^{\theta'-1}dx\leq \|\xi\|_{\infty} \|\psi^{\alpha} u_m(t)\|_{L_1(I)} \leq KS.
$$
Since, for any $T<\infty$,  $u_m$ satisfies
$$
du=(au_{xx}+bu_x+cu)dt+\sum_{k=1}^{\infty} \xi_m u\eta_k dw^k_t, \quad 0<t\leq \tau_m(S) \wedge T;\quad u(0,\cdot) = u_0.
$$
By Theorem \ref{theorem_lambda_0}, we get
\begin{equation*}
\| u_m \|^p_{\mathfrak{H}_{p,\theta}^{1/2-\kappa}(I, T\wedge \tau_m(S))} \leq N\| u_0 \|^p_{U_{p,\theta}^{1/2-\kappa}(I)}, 
\end{equation*}
where $N$ is independent of $m$ (of course, it depends on $S$ and $T$). Now, by \eqref{embedding_remark}, $\mathbb{E}\sup_{t\leq T\wedge \tau_m(S),x}|\psi^{-1/2-\kappa-\frac{1}{p}+\frac{\theta}{p}}(x)u_m(t,x)|^p$ is bounded by a constant which is independent of $m$.  Thus, by Chebyshev's inequality,
\begin{equation*}
\begin{aligned}
P(&\sup_{t\leq T,x\in I}|\psi^{-1/2-\kappa-\frac{1}{p}+\frac{\theta}{p}}u_m|\geq R)\\
&\leq P(\sup_{t\leq T \wedge \tau_m(S),x\in I}|\psi^{-1/2-\kappa-\frac{1}{p}+\frac{\theta}{p}}u_m | \geq R) + P(\tau_m(S)<T)  \\
&\leq \frac{N_S}{R} + P(\sup_{t\leq T}\| \psi^\alpha(\cdot) u_m(t,\cdot) \|_{L_1(I)}\geq S) \leq \frac{N_S}{R} +\frac{N}{\sqrt{S}},
\end{aligned}
\end{equation*}
where  $N_S$ is independent of $m,R$, and  $N$ is independent of $m,R,S$. By taking the supremum with respect to $m$,  letting $R\to\infty$ and $S\to\infty$ in order, we get \eqref{lemma}. The lemma is proved.

\end{proof}

\textbf{Proof of Theorem \ref{theorem_general_lambda}}. 

{\bf{Step 1}}. Uniqueness. Let $u,v\in\mathfrak{H}_{p,\theta,loc}^{1/2-\kappa}(I,\infty)$ be  two solutions to equation \eqref{main_equation_1}.  Then there exists a sequence of bounded stopping times $\tau_m \uparrow \infty$ such that 
\begin{equation}
    \label{eqn 12.9.11}
u,v \in \mathfrak{H}_{p,\theta}^{1/2-\kappa}(I,\tau_m).
\end{equation}  Fix $m$ and put $\tau'=\tau_m$. Since \eqref{condition_of_constant_1}, by Corollary \ref{embedding_corollary}, there exist $\varepsilon_1, \varepsilon_2>0$ such that 
\begin{equation}
        \label{eqn 12.9.10}
\mathbb{E}|u|^p_{C^{\varepsilon_1}([0,\tau']; C^{\varepsilon_2}(I))}+ \mathbb{E}|v|^p_{C^{\varepsilon_1}([0,\tau']; C^{\varepsilon_2}(I))} < \infty.
\end{equation}
Define 
\begin{equation} \label{taus}
\begin{aligned}
\tau'_n := \inf \{ t\leq \tau': \sup_x|u(t,x)| \geq n \}, \\
\tau''_n := \inf \{ t\leq \tau': \sup_x |v(t,x)| \geq n \},
\end{aligned}
\end{equation}
and $\tau'''_n:=\tau'_n \wedge\tau''_n$. By \eqref{eqn 12.9.10},  $\tau'_n, \tau''_n$ and $\tau'''_n$ are stopping times.
Note also   that, due to \eqref{eqn 12.9.10}, $\tau'''_n \uparrow\tau'$ (a.s.).  Let us denote
\begin{equation} \label{xi-bar}
\bar{\xi} = \frac{\xi(t,x) |u|^{1+\lambda}}{u}\quad \left(\frac{0}{0}:=0\right).
\end{equation}
Then, since $|u| \leq n$ for $t\leq \tau'''_n$, we have $\sup_{\omega} \sup_{t\leq \tau'''_n}\|\bar{\xi}\|_{\infty} <\infty$.  Note that  $u$ satisfies
\begin{equation} \label{during proof 3}
du =(au_{xx}+bu+cu)dt +\sum_{k=1}^{\infty} \bar{\xi}u\eta_k dw_t^k,\quad t\leq \tau'''_n,
\end{equation}
and therefore by Maximum principle, Theorem \ref{thm maximum},  we conclude $u\geq 0$ for $t\leq \tau'''_n$. Similarly, $v\geq 0$ for  $t\leq \tau'''_n$. Thus $|(-n) \vee u \wedge n|=u$ and $|(-n) \vee v\wedge n|=v$ for $t\leq \tau'''_n$.  Hence, by the uniqueness of Lemma \ref{cut_off_lemma}, we conclude $u=v$ in $\mathfrak{H}_{p,\theta}^{1/2-\kappa}(I,\tau_m)$ for each $m$. This,
\eqref{eqn 12.9.11}, and the dominated convergence theorem imply   $u=v$ for $t\leq \tau'$.

{\bf{Step 2}}. Existence. Since $\nu:=1/2+\kappa+1/p-\theta/p \geq 0$,
$$
c_0:=\sup_x \psi^{\nu} <\infty.
$$
Take  $u_m \geq 0$ from Lemma \ref{cut_off_lemma} which belongs to  $\mathfrak{H}_{p,\theta}^{1/2-\kappa}(I,T)$ for any $T<\infty$ and satisfies
$$
du=(au_{xx}+bu_x+cu)dt+\sum_{k=1}^{\infty} \xi |u \wedge m|^{1+\lambda} \eta_k dw^k_t,\, 0<t<T\, ; \quad u(0)=u_0.
$$
 For positive integers $m, R$, define
\begin{equation*}
\tau_m^R := \inf\{ t\geq0:\sup_x|\psi^{-\frac{1}{2}-\kappa-\frac{1}{p}+\frac{\theta}{p}}u_m|\geq \frac{R}{c_0}\}.
\end{equation*}
Then, for $t\leq \tau_m^R$,
\begin{equation*}
\sup_{x}|u_m(t,x)|\leq\sup_{x}|\psi^{\frac{1}{2}+\kappa+\frac{1}{p}-\frac{\theta}{p}}(x)|\sup_{x}|\psi^{-\frac{1}{2}-\kappa-\frac{1}{p}+\frac{\theta}{p}}(x)u_m(t,x)|\leq R,
\end{equation*}
and therefore $u_m \wedge m=u_m \wedge m \wedge R$ for $t\leq \tau^R_m$. 
Thus, if $m\geq R$ then   both $u_m$ and $u_R$ satisfy 
\begin{equation*}
du =(au_{xx}+bu_x+cu)dt +\sum_{k=1}^{\infty} \xi |u\wedge R|^{1+\lambda}\eta_k dw_t^k,\quad t\leq \tau_m^R,\quad u(0,\cdot) = u_0.
\end{equation*}
Also, the same is true for
\begin{equation*}
\begin{aligned}
du =(au_{xx}+bu_x+cu)dt +\sum_{k=1}^{\infty} \xi |u\wedge m|^{1+\lambda}\eta_k dw_t^k,\quad t\leq \tau_R^R,\quad u(0,\cdot) = u_0.
\end{aligned}
\end{equation*}
Thus, by the uniqueness of  Lemma \ref{cut_off_lemma},   $u_m=u_R$ for all $t\leq  (\tau_m^R \vee \tau_R^R) \wedge T$, for any  positive integer $T$.  It follows that
$$
u_m=u_R, \quad \forall  t< \tau_m^R \vee \tau_R^R\quad (a.s.).
$$
  We conclude $\tau_R^R=\tau_m^R\leq \tau_m^m$ (a.s.).  Indeed, let $t<\tau_m^R$, then  
  $$\sup_{s\leq t}  \sup_x|\psi^{-\frac{1}{2}-\kappa-\frac{1}{p}+\frac{\theta}{p}}u_m(s)| 
  < \frac{R}{c_0}.
  $$ 
Since the left side above equals 
  $\sup_{s\leq t}  \sup_x|\psi^{-\frac{1}{2}-\kappa-\frac{1}{p}+\frac{\theta}{p}}u_R(s)|$, we get $t\leq \tau^R_R$, and thus $\tau^R_m \leq \tau^R_R$. A similar argument shows $\tau^R_m \geq \tau^R_R$.

  Thus we can define the function
$$ u(t,x) = u_m(t,x)\quad  \text{for} \quad t\in [0,\tau^m_m).
$$
Observe that $|u|\leq m$ for $t\leq \tau_m^m$ and $u$ satisfies \eqref{main_equation_1} for $t<\lim_{m\to\infty}\tau_m^m$.  Also, by Lemma \ref{key_lemma},
\begin{equation*}
\begin{aligned}
\limsup_{m\to\infty}P(\tau_m^m\leq T) = \limsup_{m\to\infty}P(\sup_{t\leq T,x}|\psi^{-\frac{1}{2}-\kappa-\frac{1}{p}+\frac{\theta}{p}}u_m(t,x)|\geq \frac{m}{c_0})\\
\leq \limsup_{m\to\infty}\sup_nP(\sup_{t\leq T,x}|\psi^{-\frac{1}{2}-1-\frac{1}{p}+\frac{\theta}{p}}u_n(t,x)|\geq \frac{m}{c_0})=0.
\end{aligned}
\end{equation*}
Thus $\tau^m_m \to \infty$ in probability, and since $\tau_m^m$ is increasing, $\tau^m_m \uparrow \infty$ (a.s).  It follows that $u$ satisfies equation \eqref{main_equation_1} for all $t<\infty$. 

Define $\tau_m=\tau^m_m \wedge m$. Then, since $u=u_m$ for $t\leq \tau_m$ and $u_m\in \mathfrak{H}_{p,\theta}^{1/2-\kappa}(I,T)$ for any $T<\infty$,  it follows that  $u\in  \mathfrak{H}_{p,\theta}^{1/2-\kappa}(I,\tau_m)$ for any $m$, and consequently $u\in  \mathfrak{H}_{p,\theta,loc}^{1/2-\kappa}(I,\infty)$.
The theorem is proved.


\begin{thebibliography}{1}

\bibitem{Dalang} Dalang, R. C., Quer-Sardanyons, L., {\em Stochastic integrals for SPDE's: a comparison\/}:   Expo. Math. 29, 1, 67-109 (2011).


\bibitem{Kijung} Gomez, A., Lee, K., Mueller, C.,  Wei, A., Xiong, J., {\em Strong uniqueness for an SPDE via backward doubly stochastic differential equations\/}:
Stat. and Probab. Lett. 83, 10, 2186-2190 (2013).



\bibitem{kim2017regularity}
Kim, I., Kim, K., {\em A regularity theory for quasi-linear stochastic partial differential equations in weighted sobolev spaces\/}:   Stochastic process. Appl. 128, 622-643 (2018).


\bibitem{Kim03}  Kim, K.,   {\em On stochastic partial
differential equations with variable coefficients in $C^1$
domains\/}: Stochastic process. Appl. 112, 2, 261-283 (2004).
  
\bibitem{KK2} Kim, K., Krylov, N.V.,  {\em On the Sobolev space theory of parabolic and elliptic equations in $C^{1}$ domains\/}: SIAM J. Math. Anal. 36, 2, 618-642 (2004).
  
\bibitem{KK2004}
Kim, K., Krylov, N.V., {\em  On SPDEs with variable coefficients in one space dimension\/}: Potential Anal. 21, 3, 209-239 (2004).

  
\bibitem{kry2001}
Krylov, N.V., {\em Some properties of traces for stochastic and determistic parabolic weighted Sobolev spaces\/}:  J. Funct. Anal. 183, 1, 1-41 (2001).
  
  
\bibitem{kr99-1}   Krylov, N.V.,  {\em  Some properties of weighted
Sobolev spaces in $\bR^d_+$\/}: Ann. Scuola Norm. Sup. Pisa Cl.
Sci. 28, 4, 675-693 (1999).

\bibitem{kry1999analytic}
Krylov, N.V., {\em An analytic approach to SPDEs\/}: Stochastic partial differential equations: six perspectives. Math. Surveys Monogr. 64, 185-242 (1999).

\bibitem{krylov2007maximum}
Krylov, N. V., {\em  Maximum principle for  SPDEs and its applications\/}: Stochastic Differential Equations: Theory And Applications: A Volume in Honor of Professor Boris L. Rozovskii, pages 311-338. World Scientific, (2007).

\bibitem{kry1999weighted}
Krylov, N. V., {\em  Weighted sobolev spaces and laplace's equation and the heat equations in a half space\/}: Comm. Partial Differential Equations, 24, 9-10, 1611-1653 (1999).
  
\bibitem{diffusion} Krylov, N. V., {\em Introduction to the theory of diffusion processes\/}:  Amer. Math. Soc., Providence, RI, (1995).
  
  

\bibitem{lototsky2000sobolev}
Lototsky, S.V., {\em  Sobolev spaces with weights in domains and boundary value problems for degenerate elliptic equations\/}: Methods Appl. Anal. 7, 1, 195-204 (2000).

\bibitem{mueller1991long}
Mueller, C., {\em  Long time existence for the heat equation with a noise term\/}:
 Probab. Theory Related Fields, 90, 4, 505-517 (1991).


\bibitem{mueller2009some}
Mueller, C., {\em  Some tools and results for parabolic stochastic partial differential equations\/}: In A minicourse on stochastic partial differential equations, Springer, 111-144 (2009).


\bibitem{mueller2014nonuniqueness}
Mueller, C., Mytnik, L., and Perkins, E. A., {\em  Nonuniqueness for a parabolic SPDE with $\frac{3}{4}-\varepsilon $-H\"older diffusion coefficients\/}:
Ann. Probab, 42, 5, 2032-2112 (2014).

\bibitem{burdzy2010non}
Burdzy, K., Mueller, C., and Perkins, E. A.  {\em  Nonuniqueness for nonnegative solutions of parabolic stochastic partial differential equations\/}:
Illinois J. Math. 54, 4, 1481-1507 (2010).

\bibitem{mytnik2011pathwise}
Mytnik, L. and Perkins, E. A.  {\em  Pathwise uniqueness for stochastic heat equations
with H\"older continuous coefficients: The white noise case\/}: Probab. Theory Related Fields, 149, 1-2, 1-96 (2011).

\bibitem{Xiong} Xiong, J.,  {\em Super-Brownian motion as the unique strong solution to an SPDE\/}: Ann. Probab. 41, 2,  1030-1054, (2013).
\end{thebibliography}
\bibliographystyle{plain}

\end{document}